\NeedsTeXFormat{LaTeX2e}

\pdfoutput=1

\documentclass[reqno,a4paper]{amsart}
\usepackage{microtype}
\usepackage{color}
\usepackage{amssymb}
\usepackage{stmaryrd}
\usepackage[utf8]{inputenc}
\usepackage{a4wide}
\usepackage{paralist}
\usepackage{enumitem}
\usepackage{ifthen}
\usepackage{graphicx}
\usepackage{slashed}
\usepackage{comment}
\usepackage{multicol}

\usepackage{amsmath}
\usepackage{amsthm}
\usepackage{amsfonts}
\usepackage{sseq}
\usepackage{array}
\usepackage[all]{xy}
\usepackage{color}
\definecolor{darkblue}{rgb}{0,0,0.9}
\definecolor{lightblue}{rgb}{.5,.5,.9}
\usepackage[pdftex,breaklinks,colorlinks,filecolor=blue,linkcolor=blue,citecolor=blue,urlcolor=blue,hypertexnames=true,plainpages=false]{hyperref}

\usepackage{tikz}
\usepackage{etoolbox}
\usepackage{comment}

\numberwithin{equation}{section}

\newcommand\C{\mathbb{C}}

\newcommand\R{\mathbb{R}}

\newcommand\Q{\mathbb{Q}}
\newcommand\Z{\mathbb{Z}}
\newcommand\Ha{\mathbb{H}}

\newcommand{\boc}{\beta^{\Z/\!2}}
\newcommand{\bocj}[1]{\beta^{\Z/\!2^{#1}}}
\newcommand{\bocqz}{\beta^{\Q/\Z}}

\newcommand{\pt}{\textup{pt}}
\newcommand{\ind}{\textup{ind}}

\newcommand{\an}[1]{\langle{#1}\rangle}
\newcommand{\wt}{\widetilde}
\newcommand{\wh}{\widehat}
\newcommand{\ol}{\overline}

\newcommand{\xra}{\xrightarrow}

\newcommand{\mdot}{M^\circ}
\newcommand{\Sq}{Sq}
\newcommand{\MSpin}{\mathbf{MSpin}}
\newcommand{\K}{\mathbf{K}}
\newcommand{\M}{\mathbf{M}}
\renewcommand{\P}{\mathbf{P}}


\theoremstyle{plain}
\newtheorem{theorem}{Theorem}[section]
\newtheorem{lemma}[theorem]{Lemma}
\newtheorem{corollary}[theorem]{Corollary}
\newtheorem{proposition}[theorem]{Proposition}

\theoremstyle{definition}

\theoremstyle{remark}
\newtheorem{remark}[theorem]{Remark}


\usepackage{color}

\newcommand{\remdc}[1]{\begingroup\color{blue}#1\endgroup}


\topmargin 0pt
\advance \topmargin by -\headheight
\advance \topmargin by -\headsep
     
\textheight 8.9in
     
\oddsidemargin 0pt
\evensidemargin \oddsidemargin
\marginparwidth 0.5in
     
\textwidth 6.5in

\title{The existence of contact structures on $9$-manifolds}
\author{Diarmuid Crowley and Huijun Yang}
\date{\today}

\begin{document}
\maketitle

\begin{abstract}
We give necessary and sufficient conditions for 
a closed orientable $9$-manifold $M$ to admit an almost contact structure.
The conditions are stated in terms of the Stiefel-Whitney classes of $M$ and
other more subtle homotopy invariants of $M$.

By a fundamental result of Borman, Eliashberg and Murphy,
$M$ admits an almost contact structure if and only if $M$ admits an over-twisted contact structure.
Hence we give necessary and sufficient conditions for $M$ to admit 
an over-twisted contact structure and we prove that
if $N$ is another closed $9$-manifold which is homotopy equivalent to $M$,
then $M$ admits an over-twisted contact structure if and only if $N$ does.

In addition, for $W_{i}(M)$ the $i$th integral Stiefel-Whitney class of $M$,
we prove that if $W_3(M) = 0$ then $W_7(M) = 0$.
\end{abstract}


\section{Introduction} 
\label{s:intro}
\enlargethispage{1\baselineskip}
Let $n = 2q{+}1$ be odd and $M$ a closed smooth orientable $n$-manifold.
If the structure group of $TM$, the tangent bundle of $M$, can be reduced
to $U(q) \subset O(2q{+}1)$, then $M$ admits an almost contact structure.
The problem of finding necessary and sufficient conditions for an orientable $n$-manifold
to admit an almost contact structure is motivated by contact geometry.
In dimensions $n \geq 5$, this problem has been given extra significance
by the ground-breaking result of Borman, Eliashberg and Murphy \cite[Theorem 1.1]{bem15}, 
which states that every almost contact structure on $M$ is homotopic to an over-twisted contact
structure.

In \cite[Lemma 2.17(1)]{bcs14} Bowden, the first author and Stipsicz showed that $M$
admits an almost contact structure if and only if $M$ admits a stable complex structure.
Hence, in terms of $K$-theory, the problem is to
determine necessary and sufficient conditions for $\wt{TM} \in \wt{KO}(M)$, the reduced real $K$-theory 
class of the stable tangent bundle of $M$, to lie in the image the real reduction homomorphism,
\[
r \colon \wt{K}(M) \to \wt{KO}(M),
\]
from reduced complex $K$-theory. 
Ideally these conditions should be stated in terms of  
computable invariants of $M$, like characteristic classes of $\wt{TM}$.  
This problem has been solved in dimensions $n \leq 7$ 
(see for instance Gray \cite[p. 433, Corollary]{gr59} and Massey \cite[p. 564]{ma61})
and 
for $3$-connected $9$-manifolds \cite{ge97} Geiges obtained 
a partial solution. 
In this paper we present a solution 
for all $9$-manifolds.

\subsection{Statement of results:}
We establish necessary notation. 
For a $CW$-complex $X$,
$H^*(X)$ denotes the integral cohomology ring of $X$,
$\rho_{2^j} \colon H^*(X) \to H^*(X; \Z/2^j)$ denotes reduction mod~$2^j$ for 
a positive integer $j$ and
%
%
%
%
\begin{equation}\label{eq:bseq}
\cdots \rightarrow H^{i}(X) \xrightarrow{~\times 2^j~} H^{i}(X)\xrightarrow{~\rho_{2^j}~} 
H^{i}(X;\Z/2^j)\xrightarrow{~\bocj{j}~} H^{i+1}(X) \to \cdots
\end{equation}
is the Bockstein sequence associated to the coefficient sequence
$\Z\xrightarrow{\times 2^j}\Z\rightarrow\Z/2^j$.
The $k$th Steenrod square is 
$\Sq^{k} \colon H^{i}(X; \Z/2) \rightarrow H^{i+k}(X; \Z/2)$
%
%
%
%
and as in Thomas \cite[p.\,\,888]{th67}, $\Omega$ denotes the normalized (un-stable) secondary cohomology operation associated to the relation
\begin{equation}\label{eq:squ}
\Sq^{2} ( \Sq^{3}  ( \rho_{2} (u) ) ) =0,
\end{equation}
for all $u \in H^4(X)$.
More precisely 
$\Omega$ is defined on the subset
\[\mathcal{D}_\Omega := \mathrm{Ker} \left[ \boc \circ \Sq^{2} \circ \rho_{2}\colon H^{4}(X)\rightarrow H^{7}(X) \right]\]
and for any $u \in \mathcal{D}_\Omega$, $\Omega(u)$ is a coset of 
$\Sq^{2} ( \rho_{2} (H^{6}(X)) ) \subseteq H^{8}(X;\Z/2)$.  Hence the operation $\Omega$ is a map
\[\Omega\colon \mathcal{D}_\Omega \rightarrow H^{8}(X;\Z/2)/\Sq^{2} ( \rho_{2} ( H^{6}(X) ) )
\]
and we denote the coset of $\Sq^{2} ( \rho_{2} (H^{6}(X)) )$ represented by $z \in H^{8}(X; \Z/2)$ as $[z]$.

As usual, $w_i(M) \in H^i(X; \Z/2)$
denotes the $i$th Stiefel-Whitney class of $M$ and $W_{i+1}(M) := \boc(w_i(M)) \in H^{i+1}(M)$
denotes the $(i{+}1)$st integral Stiefel-Whitney class of $M$.
Hence $M$ is spinnable
if and only if $w_2(M) = 0$ and is spin$^c$-able if and only if $W_3(M) = 0$.
In this case we let $c \in H^2(M)$ denote an integral lift of $w_2(M)$.
From a choice of $c$, there is a well-defined class
$$ p_c(M) \in H^4(M)$$
%
%
%
with $2 p_c(M) = c^2 - p_1(M)$,
which is defined in Equation \eqref{eq:pc}.
In Lemma~\ref{lem:omegac} we show that
$$\Omega(p_c(M)) \in H^8(M; \Z/2)/Sq^2(\rho_2(H^6(M)))$$
is independent of the choice of $c$ (Lemma~\ref{lem:omegac})
and in Proposition \ref{prop:htpopc} we show that 
$\Omega(p_c(M))$ is a homotopy invariant of $M$.
If in addition $M$ is spinnable $9$-manifold, then a further homotopy invariant,
$$ \wh \phi(M) \in H^5(M; \Z/2),$$
which an unstable tangential invariant of $M$, was defined by the first author in \cite{c20}, building on invariants of Wall \cite{w67} \and Stolz \cite{stol85}; see Section \ref{ss:phi}.
%
%

The following result gives necessary and sufficient conditions for $M$ to admit a stable
complex structure, equivalently a contact structure, in terms of the invariants mentioned above.  
It is the main result of this paper.

\begin{theorem}\label{thm:main}
Let $M$ be a closed orientable $9$-manifold. 
Then $M$ admits a contact structure if and only if one of the following 
mutually exclusive conditions holds:
\begin{enumerate}
\item[(a)] 
$w_2(M) =  0$, $w_8(M) = 0$ and $w_4(M) \cup \wh \phi(M) = 0$, or
%
\item[(b)] $w_{2}(M)\neq0$, $W_3(M) = 0$ and
$\Omega\!\left( p_c(M) \right) = 0$
for any integral lift $c$ of $w_2(M)$.
\end{enumerate}
\end{theorem}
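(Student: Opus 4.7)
The plan is to reduce the theorem, via \cite{bem15} and \cite[Lemma 2.17(1)]{bcs14}, to the purely homotopical question of whether $\wt{TM} \in \wt{KO}(M)$ lies in the image of $r\colon \wt K(M) \to \wt{KO}(M)$; equivalently, whether the stable tangent classifying map $\tau_M\colon M \to BSO$ lifts along the forgetful map $BU \to BSO$. I attack this by running obstruction theory on the Moore--Postnikov tower of $BU \to BSO$ through dimension $9$. The first primary obstruction is $W_3(M) = \boc(w_2(M)) \in H^3(M)$, whose vanishing is necessary in both cases, and the dichotomy $w_2(M) = 0$ versus $w_2(M) \neq 0$ gives the case split of the theorem, with mutual exclusivity immediate from $w_2$ alone.

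In case (a), with $w_2(M) = 0$, I lift $\tau_M$ to a map $M \to BSpin$ and analyze when this further lifts along $BU \to BSpin$. By Bott periodicity for the homotopy fiber of $BU \to BSpin$, the potentially nontrivial obstructions for a $9$-manifold occur in degrees $5$, $8$ and $9$. The degree-$5$ obstruction is captured by the unstable tangential invariant $\wh\phi(M) \in H^5(M;\Z/2)$ from \cite{c20}. Under the spin hypothesis and the Wu formula, the degree-$8$ obstruction reduces to $w_8(M) = 0$, while the degree-$9$ obstruction emerges as the product $w_4(M) \cup \wh\phi(M) \in H^9(M;\Z/2)$, arising from the interaction between the degree-$5$ lift and the degree-$9$ $k$-invariant.

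In case (b), with $w_2(M) \neq 0$ but $W_3(M) = 0$, I employ the line bundle reduction: pick an integral lift $c \in H^2(M)$ of $w_2(M)$ and a complex line bundle $L$ with $c_1(L) = c$, then observe that $\wt{TM} \in \im(r)$ if and only if $\wt{TM} - r(L) \in \im(r)$. The virtual bundle $\wt{TM} - r(L)$ has vanishing $w_2$, so the spin-type analysis applies, but now the relevant Pontryagin class is $p_1(M) - c^2 = -2p_c(M)$ rather than $p_1(M)$. Rerunning the obstruction computation with this replacement, the degree-$8$ $k$-invariant becomes precisely Thomas's secondary operation of \cite{th67} applied to $p_c(M)$, giving the condition $\Omega(p_c(M)) = 0$; independence of the choice of $c$ is supplied by Lemma~\ref{lem:omegac} and the homotopy invariance by Proposition~\ref{prop:htpopc}. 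The remaining potential obstructions in degrees $5$ and $9$ must be shown to vanish automatically in this case, which I expect to follow from the structure of $p_c(M)$ together with $w_2(M) \neq 0$.

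The main obstacle will be identifying the higher $k$-invariants in the Moore--Postnikov tower with the stated characteristic-class and secondary-operation invariants. Concretely, in case (b), this means matching the degree-$8$ $k$-invariant with $\Omega$ applied to $p_c(M)$ while tracking the indeterminacy $\Sq^2 \rho_2 H^6(M)$; in case (a), confirming that the degree-$9$ obstruction collapses cleanly to $w_4(M) \cup \wh\phi(M)$ with no additional contributions from unseen secondary operations. Verifying that no further obstructions arise from tertiary operations or higher stages of the tower in dimension $9$ is the heart of the computation.
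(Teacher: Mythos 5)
Your reduction to the lifting problem along $BU \to BSO$ and the plan to run obstruction theory over the skeleta of $M$ match the paper's strategy, and your treatment of the mutual exclusivity, the primary obstruction $W_3(M)$, and the line bundle reduction $\wt{TM} \mapsto \wt{TM} - r(L)$ in case (b) are all in line with what the paper does. However, your obstruction schedule is incorrect in a way that hides the hardest part of the proof.

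The obstructions to lifting live in $H^q(M;\pi_{q-1}(SO/U))$, and for $q \le 9$ the groups $\pi_{q-1}(SO/U)$ are nonzero only for $q = 3, 7, 8, 9$; in particular $\pi_4(SO/U) = 0$, so there is \emph{no} degree-$5$ obstruction. The invariant $\wh\phi(M) \in H^5(M;\Z/2)$ is not a lifting obstruction; it only enters the answer after the top (degree-$9$) obstruction $\mathfrak{o}_9(M)$ has been identified with the $KO$-characteristic number $\pi^1(M,s)$ and one proves separately (Proposition~\ref{prop:pi_and_sigma}, via the exact sequence relating $\Omega^{SU}_9$ and $\Omega^{Spin}_9$) that $\pi^1 = \sigma_{w_4} = w_4 \cup \wh\phi$. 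Your degrees-$5,8,9$ list therefore replaces a genuine obstruction that you must kill, $\mathfrak{o}_7(M) = W_7(M) \in H^7(M)$, with a phantom one. Establishing that $W_7(M) = 0$ whenever $W_3(M) = 0$ (Theorem~\ref{thm:W7}) is the content of the whole of Section~\ref{s:W7}, requiring a generalization of the Landweber--Stong bilinear form identity to spin$^c$ $10$-manifolds (Theorem~\ref{thm:bili}) and singular spin$^c$-bordism computations. Without this input, your argument does not get past the $7$-skeleton.

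Two further points would need substance even if you fixed the obstruction schedule. First, you claim the degree-$9$ obstruction "emerges as the product $w_4(M) \cup \wh\phi(M)$ from the interaction between the degree-$5$ lift and the degree-$9$ $k$-invariant"; since there is no degree-$5$ lift, this mechanism cannot be correct, and the paper's actual route goes through the identification $\mathfrak{o}_9(M) = \pi^1(M,s)$ via real $K$-theory (Theorem~\ref{thm:topo}) followed by the bordism comparison with $\sigma_{w_4}$. Second, the automatic vanishing of the top obstruction when $w_2(M) \neq 0$ is not a consequence of "the structure of $p_c(M)$"; it is a $K$-theoretic statement proved via the injectivity of $i^\ast \colon KO^{-1}(M) \to KO^{-1}(M^\circ)$, which in turn rests on an Atiyah--Hirzebruch spectral sequence computation driven by the $d_2 = \Sq^2$ differential and the hypothesis $w_2(M) \neq 0$. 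As written, your proposal sketches the right first step but omits or misidentifies the two structural theorems ($W_7 = 0$ and $\mathfrak{o}_9 = \pi^1 = \sigma_{w_4}$) on which the result actually rests.
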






We now discuss some applications and elaborations of Theorem \ref{thm:main}.
As background to the first application, recall that Sutherland \cite{su73} has given
examples of homotopy equivalent $17$-manifolds $M$ and $N$, where $M$ admits a 
stable complex structure but $N$ does not. 
In contrast, since all the invariants involved in the statement 
of Theorem \ref{thm:main} are homotopy invariants, for $9$-manifolds we have

\begin{corollary} \label{cor:hi}
Let $M$ and $N$ be homotopy equivalent closed orientable $9$-manifolds.
Then $M$ admits a contact structure if and only if $N$ does. 
\hfill \qed
\end{corollary}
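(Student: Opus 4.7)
The plan is to deduce the corollary from Theorem~\ref{thm:main} by verifying that each of the conditions appearing in parts~(a) and~(b) is preserved under homotopy equivalence. Concretely, let $f \colon M \to N$ be a homotopy equivalence of closed orientable $9$-manifolds. Since $f^* \colon H^*(N;A) \to H^*(M;A)$ is a ring isomorphism for every coefficient group $A$, and commutes with $\Sq^k$, $\rho_{2^j}$ and $\boc$, it suffices to show that each invariant of $M$ featured in Theorem~\ref{thm:main} equals the pullback under $f^*$ of the corresponding invariant of $N$.

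The first step is to treat the Stiefel-Whitney classes. By Wu's formula the classes $w_i(M) \in H^i(M;\Z/2)$ are determined by the mod-$2$ cohomology ring of $M$ together with the action of the Steenrod squares and the mod-$2$ Poincar\'e duality structure; hence $f^* w_i(N) = w_i(M)$ for all $i$. Consequently $W_{i+1}(M) = \boc(w_i(M)) = f^* W_{i+1}(N)$, so the hypotheses ``$w_2 = 0$'', ``$w_8 = 0$'', ``$w_2 \ne 0$'' and ``$W_3 = 0$'' transfer across $f$ without loss.

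The second step is to handle the more delicate invariants $\Omega(p_c(M))$ and $\wh\phi(M)$. Proposition~\ref{prop:htpopc} asserts exactly that $\Omega(p_c(M))$ is a homotopy invariant of $M$, so under $f$ it is taken to $\Omega(p_{f^*c}(N))$; since any integral lift of $w_2(M)$ arises as $f^*c$ for some integral lift $c$ of $w_2(N)$, the vanishing condition in~(b) holds for $M$ if and only if it holds for $N$. For the spinnable case, $\wh\phi(M) \in H^5(M;\Z/2)$ is, as recalled in Section~\ref{ss:phi}, a homotopy invariant defined by the first author in \cite{c20}, so $f^* \wh\phi(N) = \wh\phi(M)$ and hence $f^*\!\bigl(w_4(N) \cup \wh\phi(N)\bigr) = w_4(M) \cup \wh\phi(M)$.

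Combining these observations, conditions (a) and (b) of Theorem~\ref{thm:main} hold for $M$ if and only if they hold for $N$, and the corollary follows. The substantive content is entirely packaged in Proposition~\ref{prop:htpopc} and the results from \cite{c20} recalled in Section~\ref{ss:phi}; the remaining argument is a direct application of Theorem~\ref{thm:main} together with naturality of Stiefel-Whitney classes under homotopy equivalence. The only step that is not immediate from formal properties of cohomology is the homotopy invariance of $\Omega(p_c(M))$ and $\wh\phi(M)$, both of which are established elsewhere in the paper.
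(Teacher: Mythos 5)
Your proposal is correct and follows exactly the same line as the paper: the paper proves the corollary simply by noting (in the paragraph immediately preceding its statement) that all the invariants appearing in Theorem~\ref{thm:main} --- the Stiefel--Whitney classes, the integral Stiefel--Whitney class $W_3$, the operation $\Omega(p_c(M))$ (via Proposition~\ref{prop:htpopc}) and the class $\wh\phi(M)$ (via \cite{c20}) --- are homotopy invariants. You have simply spelled out this observation in more detail, with no essential deviation.
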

%


%
%

The utility of a theorem like Theorem \ref{thm:main} resides in the accessibility
and properties of the invariants used in its statement.
Regarding $\wh \phi(M)$, following \cite{c20}, we set
$$\sigma_{w_4}(M) : = w_4(M) \cup \wh \phi(M) \in \Z/2$$
and in Proposition \ref{prop:pi_and_sigma} we show that 
%
%
%
\begin{equation} \label{eq:sigma_and_pi}
 \sigma_{w_4}(M) 
 = \pi^1(M, s),
\end{equation} 
where $s$ is any spin structure on $M$ and $\pi^1(M, s)$ is the index invariant 
of Anderson, Brown and Peterson \cite[\S 4]{abp66},
which is defined by evaluating $\wt{TM} \in \wt{KO}(M)$ on any $KO$-orientation 
of $(M, s)$; see Section \ref{ss:K-theory}.  
Theorem \ref{thm:main} and Equation \eqref{eq:sigma_and_pi} then show that
the existence of a contact structure on a spin $9$-manifold $(M, s)$ with $w_8(M) = 0$
is determined by the index of a twisted Dirac operator; see Lemma \ref{lem:index}.

%
%
It is not hard to check that the 
invariants of Theorem \ref{thm:main} are additive under
connected sum; see Lemma \ref{lem:Omegap}(a) for the additivity of $\Omega(p_c(M))$.
Hence if $M \sharp N$ denotes connected sum of oriented $9$-manifolds $M$ and $N$, 
we have

\begin{corollary}
\label{coro:sum}
Let $M$ and $N$ be closed spin$^c$-able $9$-manifolds.
\begin{enumerate}
\item[(a)] 
If $w_2(M) = 0$ and $w_2(N) = 0$,
then $M \sharp N$ admits a contact structure if and only if $w_8(M) = w_8(N) = 0$ and 
$\sigma_{w_4}(M) = \sigma_{w_4}(N)$.

\item[(b)] if both $M$ and $N$ are not spinnable, then $M \sharp N$ admits a contact structure if and only if both $M$ and $N$ admit contact structures;

\item[(c)] if $M$ is spinnable while $N$ is not, then $M \sharp N$ admits a contact structure if and only if $w_8(M) = 0$ and $N$ admits a contact structure.
\hfill
\qed
\end{enumerate}
\end{corollary}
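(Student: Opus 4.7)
\emph{Approach.} The plan is to apply Theorem \ref{thm:main} directly to $X := M \sharp N$ and exploit the fact that every invariant appearing in the theorem is additive under connected sum of closed oriented $9$-manifolds. The underlying structural fact is the natural splitting $H^i(X; R) \cong H^i(M; R) \oplus H^i(N; R)$ for $0 < i < 9$, compatible with Steenrod operations and Bocksteins. Under this splitting, the classes $w_i$, $W_i$, and $p_1$ decompose as direct sums; in particular, for any integral lift $c$ of $w_2(X)$ restricting to $c|_M$ and $c|_N$, one has $p_c(X) = p_{c|_M}(M) + p_{c|_N}(N)$ in $H^4(X) = H^4(M) \oplus H^4(N)$. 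Additivity of $\Omega(p_c)$ in this decomposition is Lemma \ref{lem:Omegap}(a), and additivity of $\sigma_{w_4} \in \Z/2$ is immediate from evaluating $w_4 \cup \wh\phi$ separately on each summand.

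\emph{Parts (a) and (b).} In (a) both summands are spin, so $w_2(X) = 0$ and Theorem \ref{thm:main}(a) applies. Since $H^8$ splits, $w_8(X) = 0$ iff $w_8(M) = w_8(N) = 0$; and $\sigma_{w_4}(X) = \sigma_{w_4}(M) + \sigma_{w_4}(N) \in \Z/2$ vanishes iff the two values agree, giving the claim. In (b), neither summand is spin, so $w_2(X)$ has two nonzero components and is nonzero; hence Theorem \ref{thm:main}(b) applies. Both manifolds are spin$^c$-able, so $W_3(X) = 0$, and for any lift $c$ with $c|_M = c_M$, $c|_N = c_N$, additivity gives $\Omega(p_c(X)) = 0$ iff $\Omega(p_{c_M}(M)) = 0$ and $\Omega(p_{c_N}(N)) = 0$, which by Theorem \ref{thm:main}(b) applied individually is exactly the condition that $M$ and $N$ each admit a contact structure.

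\emph{Part (c).} Here $w_2(X)$ equals the nonzero class $w_2(N)$, so Theorem \ref{thm:main}(b) applies. Choose $c$ with $c|_M = 0$ and $c|_N = c_N$ an integral lift of $w_2(N)$; by additivity $\Omega(p_c(X)) = \Omega(p_0(M)) + \Omega(p_{c_N}(N))$, vanishing iff both components vanish. The $N$-component vanishes iff $N$ admits a contact structure. The main obstacle is the $M$-component: I expect to invoke a separate lemma establishing that on a spin $9$-manifold $M$ one has $\Omega(p_0(M)) \equiv \rho_2(w_8(M)) \pmod{\Sq^2(\rho_2(H^6(M)))}$, so that $\Omega(p_0(M)) = 0$ iff $w_8(M) = 0$. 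Granted this identification, the equivalence with the stated condition in (c) follows. This step is the bridge reconciling cases (a) and (b) of Theorem \ref{thm:main} in the presence of a non-spin summand, and is where the main theorem's two very different-looking conditions are seen to be compatible under connected sum.
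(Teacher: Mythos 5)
Your proposal is correct and follows essentially the same route the paper has in mind: the paper states the corollary without proof, noting only that the invariants in Theorem~\ref{thm:main} are additive under connected sum (citing Lemma~\ref{lem:Omegap}(a) for $\Omega(p_c)$), and you supply exactly this argument in all three cases. The one piece you flag as ``a separate lemma'' needed in part~(c) --- that on a spin $9$-manifold $\Omega(p_0(M)) = 0$ iff $w_8(M) = 0$ --- is already established in the paper as the combination of Equation~\eqref{eq:o8} ($\mathfrak{o}_8(M) = \Omega(p_c(M))$) and Theorem~\ref{thm:spin} ($\mathfrak{o}_8(M) = w_8(M)$ in the spin case, where moreover $\Sq^2(\rho_2(H^6(M))) = 0$ by Lemma~\ref{lem:sq}(a) so there is no indeterminacy), so no new input is required. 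Two small clarifications: the additivity $\Omega(p_c(M\#N)) = \Omega(p_{c_M}(M)) \oplus \Omega(p_{c_N}(N))$ uses that the cross term $[\rho_2(p_{c_M}(M)\cdot p_{c_N}(N))]$ in Lemma~\ref{lem:Omegap}(a) vanishes because cup products of classes from distinct summands die in degrees below the top of a connected sum, and the additivity of $\sigma_{w_4}$ in part~(a) is cleanest to cite via its characterisation as a spin bordism invariant (equivalently $\pi^1$ by Proposition~\ref{prop:pi_and_sigma}) rather than arguing directly about $\wh\phi(M\#N)$.
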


%

If $w_4(M) = 0$, then the conditions of Theorem \ref{thm:main} simplify to give

\begin{corollary}\label{coro:w40}
Let $M$ be a closed orientable $9$-manifold with $w_{4}(M) = 0$.  Then $M$ admits a 
contact structure if and only if $W_3(M) = 0$.
\end{corollary}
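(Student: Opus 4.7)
My plan is to treat the forward direction as a trivial consequence of Theorem~\ref{thm:main} and verify the converse by splitting on whether $w_2(M)$ vanishes. The forward direction is immediate: condition~(a) assumes $w_2(M) = 0$, hence $W_3(M) = \boc(w_2(M)) = 0$, while condition~(b) assumes $W_3(M) = 0$ directly. For the converse I assume $W_3(M) = 0$ and $w_4(M) = 0$.

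If $w_2(M) = 0$, the target is condition~(a). The product $w_4(M) \cup \wh\phi(M)$ vanishes because $w_4(M) = 0$, so only $w_8(M) = 0$ remains. I would invoke Wu's formula on the closed orientable $9$-manifold $M$: the Wu classes $v_i \in H^i(M;\Z/2)$ satisfy $v_i = 0$ for $i \geq 5$ by instability (since $\Sq^i$ vanishes on $H^{9-i}$ when $i > 9-i$). Orientability gives $v_1 = 0$, so $v_2 = w_2 = 0$ and $v_3 = w_3 + \Sq^1 v_2 = \Sq^1 w_2 = 0$; then $v_4 = w_4 + \Sq^1 v_3 + \Sq^2 v_2 = 0$. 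Applying $w_8 = \sum_i \Sq^i v_{8-i}$ then gives $w_8(M) = 0$.

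If $w_2(M) \neq 0$, the target is condition~(b). Since $W_3(M) = 0$, pick an integral lift $c \in H^2(M)$ of $w_2(M)$; I must show $\Omega(p_c(M)) = 0$. The key identity is $\rho_2(p_c(M)) = w_4(M)$, which I would derive by considering a complex line bundle $L$ with $c_1(L) = c$: since $w_2(L) = \rho_2 c = w_2(M)$, the bundle $TM \oplus L$ is spin, and $p_1(TM \oplus L) = p_1(M) + c^2 = 2(c^2 - p_c(M))$. Reducing the integral class $(c^2 - p_c(M)) = p_1(TM \oplus L)/2$ modulo $2$ and using $\rho_2(p_1/2) = w_4$ for spin bundles together with $w_4(TM \oplus L) = w_4(M) + w_2(M)^2$ and $\rho_2 c^2 = w_2(M)^2$ yields $\rho_2(p_c(M)) = w_4(M)$. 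Since $w_4(M) = 0$, $p_c(M)$ lies in $\ker \rho_2 = 2 H^4(M)$, so $p_c(M) = 2\tilde x$ for some $\tilde x \in H^4(M)$; in particular $p_c(M) \in \mathcal{D}_\Omega$. Since $\Omega$ is a normalized secondary cohomology operation whose codomain $H^8(M;\Z/2)/\Sq^2(\rho_2 H^6(M))$ is $2$-torsion, the additivity of $\Omega$ modulo indeterminacy gives $\Omega(p_c(M)) = \Omega(2\tilde x) = 2\Omega(\tilde x) = 0$.

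The main obstacle is justifying this final additivity step, since $\tilde x$ may not lie in $\mathcal{D}_\Omega$, so strictly $\Omega(\tilde x)$ need not be defined. The cleanest remedy is to work universally: the map $2\tilde x : M \to K(\Z,4)$ factors through the self-map $\times 2$ of $K(\Z,4)$, whose induced pullback on $H^{>0}(K(\Z,4);\Z/2)$ is zero (it kills $\rho_2 \iota_4$, and every positive-degree mod~$2$ class is built from $\rho_2 \iota_4$ by Steenrod squares, which commute with pullback). A compatible choice of null-homotopy in the fibration defining $\Omega$ then forces $\Omega(2\tilde x)$ into the indeterminacy $\Sq^2(\rho_2 H^6(M))$. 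Alternatively, invoking Lemma~\ref{lem:Omegap}(a) on additivity of $\Omega(p_c(M))$ together with the normalization of $\Omega$ may streamline this step.
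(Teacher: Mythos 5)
The $w_2(M)=0$ branch of your argument is correct, though longer than necessary: the paper simply reads $w_8(M)=w_4(M)^2+w_2(M)^4$ off the already-derived relation \eqref{eq:w8}, whereas you re-derive it from Wu's formula. Both work.

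The $w_2(M)\neq 0$ branch has a genuine gap. Your derivation that $\rho_2(p_c(M))=w_4(M)=0$, and hence $p_c(M)=2\tilde x$, is fine. The problem is the final step $\Omega(2\tilde x)=2\Omega(\tilde x)=0$. The operation $\Omega$ is not additive: by Lemma~\ref{lem:Omegap}(a) it satisfies the \emph{quadratic} identity $\Omega(u+v)=\Omega(u)+\Omega(v)+[\rho_2(u\cdot v)]$, and the specialisation to $u=v=\tilde x$ is precisely Lemma~\ref{lem:Omegap}(b): $\Omega(2\tilde x)=[\rho_2(\tilde x^2)]$, which is generally \emph{nonzero}. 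Your ``universal'' remedy via the self-map $\times 2$ of $K(\Z,4)$ does not repair this: $\Omega$ is built from a two-stage Postnikov tower over $K(\Z,4)$, and the multiplication-by-2 map on the base does not lift to a map of the total space compatible with the structure defining $\Omega$ (indeed, if it did, Thomas's formula $\Omega(2w)=[\rho_2(w^2)]$ would have to read $\Omega(2w)=0$, which it does not). So you are left needing to show $[\rho_2(\tilde x^2)]=[0]$, i.e.\ $\rho_2(\tilde x^2)\in\Sq^2(\rho_2(H^6(M)))$.

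That last containment is not automatic; it is exactly Lemma~\ref{lem:sq}(e), which uses the hypothesis $w_4(M)=0$ a \emph{second} time, together with Poincar\'e duality and Lemma~\ref{lem:dm}. (Note that writing $\rho_2(\tilde x^2)=\Sq^4(\rho_2\tilde x)=\Sq^2\Sq^2(\rho_2\tilde x)$ via the Adem relation does not finish either, because the indeterminacy of $\Omega$ is $\Sq^2(\rho_2(H^6(M)))$, not $\Sq^2(H^6(M;\Z/2))$, and there is no reason for $\Sq^2(\rho_2\tilde x)$ to be a mod-2 reduction of an integral class.) So the correct proof is: apply Lemma~\ref{lem:Omegap}(b) to get $\Omega(p_c(M))=[\rho_2(\tilde x^2)]$, then apply Lemma~\ref{lem:sq}(e) (using $w_4(M)=0$) to conclude this class is $[0]$. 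Your proposal omits the essential second ingredient.
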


\begin{proof}
If 
$w_2(M) = 0$, then $w_8(M) = w_4(M)^2 = 0$ by Equation \eqref{eq:w8} and
the corollary follows from Theorem \ref{thm:main}.
If 
$w_2(M) \neq 0$, then as we prove in Section~\ref{ss:o8}, $\rho_2(p_c(M)) = w_4(M) = 0$.
Hence $p_c(M) = 2x$ for some $x \in H^{4}(M)$. 
Now Thomas has shown that $\Omega(2x) = [\rho_{2}(x^{2})]$ by Lemma \ref{lem:Omegap}
and by Lemma \ref{lem:sq}(e) we have $\rho_{2}(x^{2}) \in \Sq^{2}(\rho_{2} ( H^{6}(M)) )$.
Hence $\Omega(p_c(M)) = 0$ and the corollary follows from 
Theorem \ref{thm:main}.
\end{proof}






In the non-spinnable case, 
evaluating $\Omega(p_c(M))$ seems to be a subtle problem
in general but we have obtained Theorem \ref{thm:spinc} below, which is a partial result in this direction.
Let $TH^{3}(M)$ the torsion subgroup of $H^{3}(M)$ and set
\begin{equation*}
\mathcal{D}_{M} := \{x\in H^{1}(M; \Z/2) ~|~ x \cdot w_{2}(M) \in \rho_{2}(TH^{3}(M))\}.
\end{equation*}
If $M$ is spin$^c$-able, then by Theorem \ref{thm:W7} below, $w_6(M)$ has an integral 
lift $v$ and by Proposition \ref{prop:sw} $\rho_2(cv) = 0$. 
Let $cv/2$ ambiguously
denote any class in $H^8(M)$ with $2 (cv/2) = 2$.  Just prior to Theorem \ref{thm:betad}
we show that if $\boc (x) =0$ for all $x \in \mathcal{D}_{M}$ then
 $[w_8 -\rho_2(cv/2)]$ does not depend on the choices of $c, v$
or $cv/2$.

\begin{theorem}\label{thm:spinc}
Let $M$ be a $9$-dimensional spin$^{c}$-able manifold with $w_{2}(M) \neq 0$. 
Suppose that $H_1(M)$ has no $2$-torsion, 
or more generally that $\boc(x) = 0$ for all $x \in \mathcal{D}_{M}$.
Then $M$ admits a contact structure if and only if 
$$ [w_{8}(M) - \rho_{2}(cv/2)] = [0] \in H^{8}(M; \Z/2) / \Sq^{2} ( \rho_{2} ( H^{6}(M) ) ),$$
for $c \in H^{2}(M)$ and $v \in H^{6}(M)$ integral lifts of
$w_2(M)$ and $w_6(M)$ respectively.
\end{theorem}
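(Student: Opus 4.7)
By Theorem \ref{thm:main}(b), since $w_2(M) \neq 0$ and $M$ is spin$^c$-able, $M$ admits a contact structure if and only if $\Omega(p_c(M)) = 0$ for any integral lift $c$ of $w_2(M)$. The plan is therefore to prove, under the hypothesis $\boc(x) = 0$ for all $x \in \mathcal{D}_{M}$, the identity
\[
\Omega(p_c(M)) \;=\; \bigl[\, w_{8}(M) - \rho_{2}(cv/2) \,\bigr] \ \in \ H^{8}(M; \Z/2) / \Sq^{2}\bigl(\rho_{2}(H^{6}(M))\bigr),
\]
from which the theorem follows at once. Well-definedness of the right-hand side under changes of $c$, $v$, and $cv/2$ has already been recorded in the paragraph preceding the statement; I would adopt it and check separately the elementary claim that if $H_1(M)$ has no 2-torsion then $\boc(\mathcal{D}_M) = 0$, via a short universal-coefficients argument.

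To evaluate $\Omega(p_c(M))$, my approach is to work essentially universally. Because $\Omega$ is natural and depends only on $p_c(M) \in H^4(M)$ together with the ambient spin$^c$ data, I would work in a classifying space for spin$^c$ structures equipped with the tautological class $p_c$ (defined by $2\, p_c = c^2 - p_1$) and tautological integral lifts $v$ of $w_6$ and $cv/2$ of $cv$, and compute $\Omega(p_c)$ there using Thomas's description of $\Omega$ as a lift across the principal $K(\Z/2, 7)$-fibration with $k$-invariant $\boc \circ \Sq^{2} \circ \rho_{2}$. The resulting obstruction lives in degree $8$ modulo $\Sq^{2}(\rho_{2} H^{6})$, and on pulling back by the classifying map of $TM$ I expect the dominant term to produce $w_8(M)$ while the correction $-\rho_{2}(cv/2)$ appears as the integral ambiguity from dividing $c^2 - p_1$ by $2$ in the universal setting. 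A convenient technical reduction would be to replace $BSpin^c$ by its 8-dimensional Postnikov approximation, since $\Omega(p_c(M))$ depends only on data through dimension $8$.

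The main obstacle is this universal computation of $\Omega(p_c)$: secondary cohomology operations are notoriously delicate to evaluate and one must carefully track the indeterminacy $\Sq^{2}(\rho_{2} H^{6})$ at each step, absorbing cross terms and choice-dependence modulo it. The hypothesis $\boc(\mathcal{D}_M) = 0$ is precisely what makes the correction $\rho_{2}(cv/2)$ canonical enough for the identity to hold, as flagged in the paragraph preceding the theorem; without it, the $cv/2$ ambiguity cannot be fully absorbed into the indeterminacy, which is why the theorem is only a partial result in the non-spinnable case, contrasting with the cleaner formulation available in the spinnable case via Theorem \ref{thm:main}(a).
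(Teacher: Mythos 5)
Your reduction to Theorem \ref{thm:main}(b) and your identification of the target identity
\[
\Omega(p_c(M)) = \bigl[\, w_{8}(M) - \rho_{2}(cv/2) \,\bigr]
\]
are the right starting point, and the observation about checking $\boc(\mathcal{D}_M)=0$ when $H_1(M)$ has no $2$-torsion is fine. But the proposal has a genuine gap: the core step --- the actual evaluation of $\Omega(p_c(M))$ --- is only sketched as a plan to ``work universally'' in a Postnikov approximation of $BSpin^c$ and pull back. You do not carry this out, and as you yourself note, evaluating a secondary operation universally while tracking the $\Sq^2(\rho_2 H^6)$-indeterminacy is delicate. Worse, the indeterminacy here is $\Sq^2(\rho_2 H^6(M))$, a submodule that depends on $M$, and the well-definedness of $[w_8(M) - \rho_2(cv/2)]$ itself hinges on the hypothesis $\boc(\mathcal{D}_M)=0$, which is a condition on $M$ rather than something visible in $BSpin^c$; a purely universal computation in the classifying space cannot make use of that hypothesis, so it is unclear how your route would see where and why $\boc(\mathcal{D}_M)=0$ enters. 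In short, the proposal restates the problem of computing $\Omega(p_c(M))$ without solving it.

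The paper takes a materially different, and less direct, route that avoids computing $\Omega$ on the nose. It first proves a bundle-theoretic criterion (Proposition \ref{prop:8obs}): $M$ admits a stable complex structure over $\mdot$ if and only if there exists a complex vector bundle $\eta$ over $M$ restricting over $M^{(7)}$ to a stable complex structure on $TM|_{M^{(7)}}$, with $w_8(\eta) - w_8(M) \in \Sq^2(\rho_2 H^6(M))$; the proof uses spin characteristic classes $q_1, q_2$, Li--Duan, Adams, and a result of the second author on stable complex structures over $8$-complexes. It then proves (Proposition \ref{prop:w8eta}) that any such $\eta$ automatically satisfies $[w_8(\eta)] = [\rho_2(c_1(\eta) c_3(\eta)/2)]$, and this is where the real input lies: one applies the differential Riemann--Roch theorem of Atiyah--Hirzebruch to $M \times S^1$ with a line bundle twist coming from a class $\bar x \in H^1(M)$ lifting $x \in \mathcal{D}_M$, extracts an integrality statement for the relevant characteristic number, and reads off the desired vanishing mod $\Sq^2(\rho_2 H^6(M))$ via Lemma \ref{lem:dm}. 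This is exactly where the hypothesis $\boc(\mathcal{D}_M)=0$ is used, since it provides the integral lift $\bar x$. Theorem \ref{thm:spinc} then follows from Theorem \ref{thm:betad} together with Theorem \ref{thm:topo} (vanishing of the top obstruction when $w_2(M) \neq 0$). Note also that the paper proves only the biconditional ``$\Omega(p_c(M))=[0]$ iff $[w_8(M)-\rho_2(cv/2)]=[0]$'', which suffices; your proposal aims for the stronger coset identity, which is more than the theorem requires and harder to establish.
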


As we explain in Section \ref{ss:outline}, the proof of Theorem \ref{thm:main}
requires the following result about the integral Stiefel-Whitney class $W_7(M)$, 
which is of independent interest.

\begin{theorem}\label{thm:W7}
If $M$ is a closed orientable $9$-dimensional spin$^{c}$-able manifold, then
$W_{7}(M)=0$.
\end{theorem}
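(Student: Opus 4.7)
The plan is to establish the stronger statement $H^{7}(B\mathrm{Spin}^{c}; \Z) = 0$, from which $W_{7}(M) = 0$ follows by naturality along the classifying map of the tangent bundle of $M$. Note that this route does not actually use the $9$-dimensionality of $M$.

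First I would compute the low homotopy groups of $B\mathrm{Spin}^{c}$. The covering $\mathrm{Spin} \times U(1) \to \mathrm{Spin}^{c}$ induces isomorphisms on $\pi_{i}$ for $i \geq 2$, while the long exact sequence of the fibration $\mathrm{Spin} \to \mathrm{Spin}^{c} \to U(1)$ (with projection $[s, z] \mapsto z^{2}$) gives $\pi_{1}(\mathrm{Spin}^{c}) = \Z$. Together these yield $\pi_{2}(B\mathrm{Spin}^{c}) = \pi_{4}(B\mathrm{Spin}^{c}) = \Z$ and $\pi_{i}(B\mathrm{Spin}^{c}) = 0$ for $i \in \{1, 3, 5, 6, 7\}$. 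The only potentially nontrivial $k$-invariant in the Postnikov $7$-section sits in $H^{5}(K(\Z, 2); \Z)$, which vanishes since $H^{*}(K(\Z, 2); \Z) = \Z[c]$ is concentrated in even degrees; hence
\[
P_{7}(B\mathrm{Spin}^{c}) \simeq K(\Z, 2) \times K(\Z, 4),
\]
and the canonical map $B\mathrm{Spin}^{c} \to P_{7}(B\mathrm{Spin}^{c})$, being an $8$-equivalence, induces an isomorphism on $H^{i}(-; \Z)$ for $i \leq 7$.

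The main remaining step, and the hard part of the argument, is to show $H^{7}(K(\Z, 2) \times K(\Z, 4); \Z) = 0$. Because $H^{*}(K(\Z, 2); \Z)$ is torsion-free and concentrated in even degrees, the Künneth formula contributes no Tor summands and reduces the task to showing $H^{j}(K(\Z, 4); \Z) = 0$ for odd $j \leq 7$; the cases $j = 1, 3$ are immediate from Hurewicz, leaving $j = 5$ and $j = 7$. At the prime $2$, the Bockstein spectral sequence on $H^{*}(K(\Z, 4); \Z/2)$ disposes of the only generators in range: $\Sq^{1} \iota_{4} = 0$ because $\iota_{4}$ is integral, and $\Sq^{1}$ sends $\Sq^{2} \iota_{4} \in H^{6}$ onto $\Sq^{3} \iota_{4} \in H^{7}$, killing both at $E_{2}$. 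At each odd prime $p$, Serre's computation gives that the lowest new generator of $H^{*}(K(\Z, 4); \Z/p)$ beyond $\iota_{4}$ is $P^{1} \iota_{4}$ in degree $4 + 2(p-1) \geq 8$, so $H^{j}(K(\Z, 4); \Z/p) = 0$ for $5 \leq j \leq 7$. Combined with the vanishing of $H^{j}(K(\Z, 4); \Q)$ for $j$ not divisible by $4$, this yields $H^{j}(K(\Z, 4); \Z) = 0$ for $j = 5, 7$ as required.

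Assembling the pieces gives $H^{7}(B\mathrm{Spin}^{c}; \Z) = 0$; pulling back $W_{7} \in H^{7}(BSO; \Z)$ along $B\mathrm{Spin}^{c} \to BSO$ and then along the tangent bundle classifying map $M \to B\mathrm{Spin}^{c}$ of the spin$^{c}$-able manifold $M$ yields $W_{7}(M) = 0$.
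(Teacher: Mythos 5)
Your plan cannot work as stated, and the cleanest way to see this is that you announce at the outset that ``this route does not actually use the $9$-dimensionality of $M$.'' That is fatal: the paper itself records (following Diaconescu--Moore--Witten) that there is a \emph{spinnable} $10$-manifold $N$ with $W_7(N)\neq 0$. Since $W_7$ is pulled back from $H^7(B\mathrm{Spin}^c;\Z)$ along the classifying map of the tangent bundle, the mere existence of such an $N$ forces $W_7 \neq 0$ in $H^7(B\mathrm{Spin}^c;\Z)$. No argument that avoids the dimension hypothesis can succeed, because the universal class being killed is genuinely non-zero.

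The concrete error sits in the Bockstein spectral sequence step. You observe, correctly, that $\Sq^1$ carries $\Sq^2\iota_4 \in H^6(K(\Z,4);\Z/2)$ onto $\Sq^3\iota_4 \in H^7(K(\Z,4);\Z/2)$, so both classes die at $E_2$. But you then conclude that $H^7(K(\Z,4);\Z)=0$, which inverts the meaning of the spectral sequence. The $E_\infty$-page of the mod-$2$ Bockstein spectral sequence detects the \emph{free} part of integral cohomology; a class dying between $E_r$ and $E_{r+1}$ detects $\Z/2^r$-\emph{torsion}. Thus $\Sq^2\iota_4 \in \ker\Sq^1$ failing to hold, equivalently $\Sq^1\Sq^2\iota_4 = \Sq^3\iota_4 \neq 0$, shows precisely that $\boc(\Sq^2\iota_4)$ is a non-zero $\Z/2$-class in $H^7(K(\Z,4);\Z)$, i.e.\ $H^7(K(\Z,4);\Z)\cong\Z/2$. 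Feeding this back into your (otherwise correct) K\"unneth and Postnikov analysis gives $H^7(B\mathrm{Spin}^c;\Z)\cong\Z/2$, generated by $W_7$, rather than $0$. The theorem is therefore a statement about $9$-manifolds specifically, and this is exactly why the paper's proof is bordism-theoretic and Poincar\'e-duality-driven: it reduces $W_7(M)=0$ via Lemma~\ref{lem:W7} to showing $t\cdot w_6(M)=0$ for all torsion $t\in TH^3(M)$, and establishes this by computing in $\Omega^{Spin^c}_*$ of suitable Eilenberg--MacLane spaces together with a new bilinear-form identity (Theorem~\ref{thm:bili}) that uses the fundamental class of a $10$-dimensional spin$^c$ bounding manifold.
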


The following remarks provide a wider context for Theorem \ref{thm:W7}.

\begin{remark}
Let $N$ be a closed orientable smooth manifold. It follows from the definition of $W_{7}(N)$ that 
$W_{7}(N)$ is a torsion element and 
\begin{equation*} \label{eq:W7w7}
\rho_{2} (W_{7}(N)) = w_{7}(N).
\end{equation*}
Hence $W_{7}(N) = 0$ if $\dim N \le 7$.
Moreover, Massey \cite[Theorem 2]{ma62} proved that $W_{7}(N) = 0$ when $\dim N = 8$. 
\end{remark}
\begin{remark}
Consider the 9-dimensional Dold manifold $D_{(5, 2)}$,
which is the $(\Z/2)$-quotient of $S^5 \times \C P^2$ where a generator acts by the 
product of the antipodal map and complex conjugation \cite{d65}.
This manifold is orientable and non-spin$^{c}$-able with $w_{7}(D_{(5, 2)}) \neq 0$ by Tang \cite[Lemma 2.2]{ta96} and so
$$W_{7}(D_{(5, 2)}) \neq 0.$$
\end{remark}
\begin{remark}
Diaconescu, Moore and Witten \cite[Appendix D]{dmw02} proved that there is a spinnable $10$-manifold $N$ with $W_{7}(N)\neq0$. 
Hence there exists spinnable $n$-manifolds $N$ with $W_{7}(N) \neq 0$ 
if and only if $n \geq 10$.
\end{remark}

Since $w_7(M) = \rho_2(W_7(M))$, Theorem \ref{thm:W7} implies

\begin{corollary}\label{coro:w7}
For any closed orientable spin$^{c}$-able $9$-manifold $M$,
$w_{7}(M)=0$.  \qed
\end{corollary}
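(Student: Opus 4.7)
The plan is essentially a one-line deduction from the preceding theorem, so the proposal is correspondingly short. Granted Theorem \ref{thm:W7}, which asserts that $W_7(M) = 0$ for every closed orientable spin$^c$-able $9$-manifold, I would invoke the general fact recorded just before Theorem \ref{thm:W7}, namely that for any closed orientable smooth manifold $N$ one has
\[
\rho_2\bigl(W_{i+1}(N)\bigr) = w_{i+1}(N),
\]
which is immediate from the definition $W_{i+1}(N) := \boc(w_i(N))$ together with the fact that $\rho_2 \circ \boc$ coincides with $\Sq^1$, and that $\Sq^1 w_6 = w_7 + w_1 w_6 = w_7$ when $M$ is orientable (so $w_1 = 0$). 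Applying $\rho_2$ to the equation $W_7(M) = 0$ furnished by Theorem \ref{thm:W7} then yields $w_7(M) = 0$.

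There is no real obstacle to overcome at this stage: all the analytic and homotopy-theoretic work has been absorbed into Theorem \ref{thm:W7}, whose own proof is where the argument is substantive. The only book-keeping item worth verifying, if one wanted to be pedantic, is that the orientability hypothesis is genuinely used to identify $\rho_2 W_7(M)$ with $w_7(M)$ rather than with $w_7(M) + w_1(M) w_6(M)$; but this is automatic under the standing hypothesis that $M$ is orientable.
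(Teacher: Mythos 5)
Your proposal is correct and is precisely the paper's own argument: the paper records the identity $\rho_2(W_7(N)) = w_7(N)$ in the remark preceding Theorem~\ref{thm:W7} and then deduces the corollary by applying $\rho_2$ to $W_7(M) = 0$. You have merely unpacked the identity $\rho_2 \circ \boc = \Sq^1$ together with Wu's formula $\Sq^1 w_6 = w_7 + w_1 w_6$ and the orientability hypothesis, which is a sound (if slightly more verbose) justification of the same one-line deduction.
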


\begin{remark}
Wilson \cite[Corollary 2.7, Proposition 6.2]{wi73} proved 
that $w_{7}(N) = 0$ for any spinnable manifold $N$ with $\dim N \le 12$ 
and that there exists a spinnable manifold $N^{\prime}$ with $\dim N^{\prime} = 13$ such that $w_{7}(N^{\prime}) \neq 0$. 
\end{remark}

\subsection{Some examples} \label{ss:examples}
In this subsection we give examples showing that all of the obstructions in
Theorem \ref{thm:main} are realised by orientable 9-manifolds.
In the spinnable case, $M_0 := S^1 \times \mathbb{H} P^2$ has $w_8(M_0) \neq 0$.
If we orient $\mathbb{H} P^2$ and equip $S^1$ with the non-bounding spin structure, then
we obtain a spin manifold $(M_0, s_0)$
and if we carry out spin
surgery on $S^1 \times D^8 \subset S^1 \times \mathbb{H} P^2$, then we obtain
a new spin $9$-manifold $(M_1, s_1)$ with $H^8(M_1; \Z/2) = 0$ and so $w_8(M_1) = 0$.
However, $\pi^1(M_1, s_1) = w_4(M_1) \cup \wh \phi(M_1) \neq 0$ by Equation \eqref{eq:M_1}.

In the non-spinnable case, 
the Dold manifold $M_2 : = D_{(5,2)}$ has $W_3(M_2) \neq 0$.
Suppose that $N$ has $W_3(N) = 0$ but $w_2(N) \neq 0$,
so that $N$ is spin$^c$-able but not spinnable
and that $N$ admits a contact structure; for example we could take $N = S^1 \times \C P^4$.
Then the connected sum $M_3 : = M_0 \# N$ is similarly 
spin$^c$-able but not spinnable,
$\Omega( p_{c} ( M_0 ) ) =  [w_8( M_0 )] \neq [0]$ by \eqref{eq:o8} and Theorem \ref{thm:spin} 
and $\Omega( p_c ( N ) ) = [0]$.  
By Lemma \ref{lem:Omegap}(a),
$$\Omega( p_c ( M_3) ) = \Omega( p_{c} ( M_0 ) \oplus  p_c ( N ) ) = \Omega( p_{c} ( M_0 ) ) \oplus \Omega( p_c ( N ) ) \neq [0].$$
%

\subsection{The outline of the proof} \label{ss:outline}
We fix a homotopy equivalence from $M$ to a $CW$-complex and,
abusing notation, let $M^{(q)}$ denote the $q$-skeleton of $M$. 
While we are interested in $\wt{TM}$, the stable tangent bundle of $M$,
for clarity we consider any orientable stable bundle $\wt \xi$ over $M$.
Our proof proceeds by induction over the skeleta of $M$, so we
suppose that $\wt \xi$ admits a stable complex structure $h$ over $M^{(q-1)}$. We let
\[
\mathfrak{o}_{q}(h) \in H^{q}(M; \pi_{q-1}(SO/U))
\]
denote the obstruction to extending $h$ to the $q$-skeleton $M^{(q)}$,
\[
\mathfrak{o}_q(\wt \xi) : = \{\mathfrak{o}_q(h) \mid \text{$h$ is a stable complex structure on
$\wt{\xi}|_{M^{(q-1)}}$} \} \]
denote the set of all obstructions as $h$ varies, set $\mathfrak{o}_q(M) : = \mathfrak{o}_q(\wt{TM})$
and if $\mathfrak{o}_q(\wt \xi) = \{x\}$ is a singleton we write $\mathfrak{o}_q(\wt \xi) = x$.
For $q \le 9$, the only obstructions which do not vanish in general are $\mathfrak{o}_{3}(h) \in H^{3}(M)$, 
$\mathfrak{o}_{7}(h) \in H^{7}(M)$, $\mathfrak{o}_{8}(h) \in H^{8}(M; \Z/2)$ and 
$\mathfrak{o}_{9}(h) \in H^{9}(M; \Z/2)$. 
These obstructions were studied by Massey \cite[Theorem I and equation (6)]{ma61} and
Thomas \cite[Remark p.\,\,905 ]{th67} who results combine to give

\begin{lemma}[Massey \& Thomas]\label{lem:obs}
The first obstructions satisfy
\begin{enumerate}
\item[(a)] $\mathfrak{o}_{3}(\wt \xi) = W_{3}(\wt \xi)$, 
\item[(b)] $\mathfrak{o}_{7}(\wt \xi) = W_{7}(\wt \xi)$ and 
\item[(c)] $\mathfrak{o}_{8}(\wt \xi)$ is a coset of $Sq^{2} \bigl( \rho_{2} ( H^{6}(M) ) \bigr)$.
\end{enumerate}
\end{lemma}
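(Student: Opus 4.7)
The plan is to do obstruction theory for the lift of the classifying map $M \to BSO$ through the fibration $SO/U \to BU \to BSO$, so that the obstructions on the $q$-skeleton live in $H^q(M; \pi_{q-1}(SO/U))$. The first step is to compute $\pi_i(SO/U)$ for $i \le 7$ using the long exact sequence of $U \to SO \to SO/U$ and Bott periodicity; this yields $\pi_2(SO/U) \cong \Z$, $\pi_6(SO/U) \cong \Z$, $\pi_7(SO/U) \cong \Z/2$, and $\pi_i(SO/U) = 0$ for $i \in \{0,1,3,4,5\}$. This accounts both for the coefficient groups appearing in (a)--(c) and for the vanishing of the intermediate obstructions $\mathfrak{o}_4$, $\mathfrak{o}_5$, $\mathfrak{o}_6$.

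For (a), since $\pi_0(SO/U) = \pi_1(SO/U) = 0$, any complex structure on $M^{(2)}$ is unique up to homotopy, so $\mathfrak{o}_3(\wt\xi)$ is a single well-defined class in $H^3(M; \Z)$. I would identify it with $W_3(\wt\xi)$ via the transgression of the generator of $H^2(SO/U; \Z) \cong \Z$ in the Serre spectral sequence of $SO/U \to BU \to BSO$: the class $W_3 \in H^3(BSO; \Z)$ pulls back to zero in $H^3(BU;\Z)$ because $w_2 \in H^2(BSO; \Z/2)$ lifts to $c_1 \in H^2(BU; \Z)$, so $W_3$ is necessarily transgressive, and a degree argument identifies the transgressed class as $W_3$. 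This is classical and contained in Massey \cite[Theorem~I]{ma61}. For (b), the additional observation is that modifications of $h$ over $M^{(6)}$ are parametrised by $H^6(M; \pi_6(SO/U)) = H^6(M; \Z)$, and such modifications change the obstruction $7$-cocycle only by a coboundary, so $\mathfrak{o}_7(\wt\xi)$ remains a single class in $H^7(M; \Z)$. The analogous transgression argument, in the next nonvanishing degree, identifies it with $W_7(\wt\xi)$, again following Massey.

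For (c) the situation changes substantively: $\pi_7(SO/U) = \Z/2$, and now varying $h$ over $M^{(6)}$ by a class $\beta \in H^6(M; \Z)$ does shift the primary obstruction $\mathfrak{o}_8(h) \in H^8(M; \Z/2)$ by the next $k$-invariant of the Postnikov tower of $SO/U$, applied to $\beta$. Thomas \cite[Remark p.\,905]{th67} identifies this $k$-invariant with $\Sq^2 \circ \rho_2 \colon H^6(M;\Z) \to H^8(M; \Z/2)$, so the indeterminacy is exactly $\Sq^2(\rho_2(H^6(M)))$. The main obstacle is precisely this $k$-invariant computation: it requires understanding the low-dimensional mod $2$ cohomology of $SO/U$ through dimension $8$ together with the action of $\Sq^2$ on it, which is the substantive input from Thomas's paper and is what rules out any contribution from other cohomology operations in this range.
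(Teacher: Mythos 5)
The paper does not prove this lemma at all: it is stated as a citation lemma, with the content attributed directly to Massey \cite[Theorem~I and equation~(6)]{ma61} and Thomas \cite[Remark p.\,905]{th67}. Your sketch reconstructs the framework behind those references --- obstruction theory for the fibration $SO/U \to BU \to BSO$, the homotopy groups of $SO/U$ via Bott periodicity, and identification of the obstructions via transgression and Postnikov $k$-invariants --- and that framework is correct; your table of $\pi_i(SO/U)$ is also correct.

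There is, however, a genuine gap in your treatment of the indeterminacy in parts (b) and (c). In classical obstruction theory, the cohomology class $\mathfrak{o}_q(h)$ depends only on $h$ restricted to $M^{(q-2)}$, up to homotopy. So the modifications of $h$ over $M^{(6)}$ rel $M^{(5)}$ that you invoke for (b) --- those parametrised by $H^6(M;\Z)$ --- automatically leave $\mathfrak{o}_7(h)$ unchanged; noting that they change the obstruction $7$-cocycle ``only by a coboundary'' is a tautology, not the reason $\mathfrak{o}_7(\wt\xi)$ is a singleton. The actual source of potential indeterminacy is the choice of $h$ on $M^{(5)}$: since $\pi_2(SO/U)\cong\Z$ and $\pi_3=\pi_4=\pi_5=0$, homotopy classes of stable complex structures over $M^{(5)}$ form a torsor over $H^2(M;\Z)$ (concretely, the choice of $c_1(h)$ among integral lifts of $w_2$), and one must show this $H^2$-ambiguity does not move $\mathfrak{o}_7$. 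That fact is what Massey actually establishes, and it does not come for free from the Postnikov picture --- it requires knowing that the integral class $W_7$, viewed in the relevant Moore--Postnikov stage, restricts trivially to the $K(\Z,2)$-fibre. Similarly in (c): your identification of the $H^6$-shift with $\Sq^2\circ\rho_2$ shows that $\mathfrak{o}_8(\wt\xi)$ contains a full coset of $\Sq^2(\rho_2(H^6(M)))$, but the statement that it is \emph{exactly} such a coset also requires ruling out extra indeterminacy from the $H^2$-choice on the $2$-skeleton. Both of these facts are part of what is imported from Massey and Thomas, so you should either prove them or explicitly attribute them, rather than derive them from the $H^6$-modification argument alone.
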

\noindent
The fact that $\mathfrak{o}_3(\wt \xi) = W_3(\wt \xi)$ records the well-know fact that
stably complex bundles are spin$^c$-able {\em and we now assume that $W_3(M) = 0$.}

For spin$^c$-able $9$-manifolds,
the first main contribution of this paper is Theorem \ref{thm:W7}, which states that
$W_7(M) = 0$ and so by Lemma \ref{lem:obs}(b), $\mathfrak{o}_7(M) = 0$.
The proof that $W_7(M) = 0$ uses computations of certain singular spin$^c$-bordism groups
and relies on Theorem \ref{thm:bili}, which generalises
an identity of Landweber and Stong \cite[Proposition 1.1]{ls87} for spin-manifolds to spin$^c$-manifolds. 
%

For the obstructions over the $8$-skeleton, we use results of Thomas \cite{th67} and facts about
the Stiefel-Whitney classes of orientable 9-manifolds to show in Equation \eqref{eq:o8} that
$$\Omega(p_c(M)) = \mathfrak{o}_8(M).$$
Hence $M^\circ$ admits a stable complex structure if and only if $\Omega(p_c(M)) = 0$.
In the non-spinnable case, the computation of $\Omega(p_c (M))$ can be subtle but
there is no further obstruction of $M$ admitting a stable complex structure.
In the spinnable case we use calculations in spin bordism to prove that 
$\Omega(p_c(M)) = w_8(M)$. 
These calculations include Proposition \ref{prop:AHSS-d3}, a general result computing
a $d_3$-differential
in the Atiyah-Hirzebruch Spectral Sequence for spin bordism which is of independent interest.

In the spinnable case, there is
an additional top obstruction even if $M^\circ$ admits a stable complex structure.
%
Now the top obstruction $\mathfrak{o}_9(M) \in H^9(M; \Z/2)$ is a singleton
and we first use $K$-theoretic arguments to prove that $\mathfrak{o}_9(M)$ 
can be identified with the topological index $\pi^1(M, s)$,
where $s$ is any spin structure on $M$.  We then prove Equation \eqref{eq:sigma_and_pi}, 
$\pi^1(M, s) = \sigma_{w_4}(M)$ and hence
$$ \mathfrak{o}_9(M) = \pi^1(M, s) = \sigma_{w_4}(M).$$
This completes the journey up the skeleton of $M$ and also the outline of the proof.

\begin{remark}
Our proof of Equation \eqref{eq:sigma_and_pi} in Section~\ref{ss:sigma_and_pi} 
is purely computational, using the 9-dimensional spin- and $SU$-bordism groups.
It would be interesting to have a more conceptual proof or even
index-theoretic proof of this identity.
\end{remark}

\subsection{The $\wh \phi$-invariant} \label{ss:phi}
Since the $\wh \phi$-invariant is subtle and perhaps not so well-known, we make some remarks
about its definition and history.  For $3$-connected $M$, the $\wh \phi$-invariant first appeared
in Wall's work on the classification of $3$-connected $9$-manifolds.
However, the definition of $\wh \phi(M)$ as deferred to another paper which did not appear.
The situation was largely remedied by Stolz \cite{stol85} but there were unnecessary assumptions in 
Stolz's definition of $\wh \phi(M)$, which {\em a priori} restrict it to stably parallelisable $M$.
Stolz's main idea, when generally applied, gives rise to the $\wh \phi$-invariant defined in \cite{c20}.
We explain this now, assuming that the reader is familiar with Wu-orientations as defined in \cite{br72}.
Assume that $M$ is given a $v_6$-orientation $\bar \nu_M$.  Then $M \times S^1$ inherits a $v_6$-orientation $\bar \nu_{M \times S^1}$ from $M$ and so a {\em Brown quadratic form}  
$$ \varphi(\bar \nu_{M \times S^1}) \colon H^5(M \times S^1; \Z/2) \to \Z/4 $$
is defined \cite{br72}. 
On the image of the natural map $H^5(M \times S^1, M; \Z/2) \to H^5(M \times S^1; \Z/2)$,
the form $\varphi(\bar \nu_{M \times S^1})$ is linear and takes values in $\Z/2 \subset \Z/4$.
Since this image is isomorphic to $H^4(M; \Z/2)$, the form $\varphi(\bar \nu_{M \times S^1})$
defines a homomorphism $H^4(M; \Z/2) \to \Z/2$, equivalently the element 
$\wh \phi(M) \in H^5(M; \Z/2)$.

The $\wh \phi$-invariant is an unstable tangential invariant:
if $x \in H_5(M; \Z/2)$ is represented by an embedding $f_x \colon S^5 \to M$,
then $\wh \phi(M)(x)$ is non-zero if and only if the once-stabilised 
normal bundle of $f_x$
non-trivial in $\pi_5(BO(5)) \cong \Z/2$.
The statements that $\sigma_{w_4}(M) = w_4(M) \cup \wh \phi(M)$ is a spin bordism invariant
and that $\wh \phi(M)$ is a homotopy invariant 
are both proven in \cite{c20}.

\subsection{Remarks on the map $BSO \to B(SO/U)$} \label{ss:B(SO/U)}
In this section we make some remarks on the general problem of finding 
necessary and sufficient conditions for an orientable stable vector bundle
to admit a complex structure.

Let $\wt \xi \in \wt{KO}(X)$ be an orientable stable vector bundle over a space $X$.
Whether $\wt \xi$ admits a complex structure is independent of the
orientation we chose for $\wt \xi$, so we pick an orientation and consider the associated
classifying map of $\wt \xi$, $f_{\wt \xi} \colon X \to BSO$.  We let $SO/U$ denote the homotopy fibre
of the natural infinite loop 
map $F \colon BU \to BSO$ and consider the fibration sequence
$BSO \xra{F} BU \xra{G} B(SO/U)$, as well as the lifting problem
$$
\xymatrix{  
&
BU \ar[d]^F \\
X \ar[r]^{f_{\wt \xi}} \ar@{.>}[ur] &
BSO \ar[r]^(0.4)G &
B(SO/U).
}$$
We see that $\wt \xi$ admits a complex structure if and only if $G \circ f_{\wt \xi}$ is null-homotopic.
Hence the problem of determining when orientable stable vector bundles admit complex
structures is controlled by the map $G \colon BSO \to B(SO/U)$.

Now by real Bott periodicity \cite{b59}, the space $B(SO/U)$ is homotopy equivalent to $Spin$ and so
we regard $G$ as a map $G \colon BSO \to Spin$.
From the discussion above, we see that if the composition
$$f_{\wt \xi}^* \circ G^* \colon H^*(Spin; A) \to
H^*(X; A)$$ 
is non-zero for any co-efficient group $A$, then $\wt \xi$ does not admit a complex
structure.  Massey's results that $\mathfrak{o}_3(\wt \xi) = W_3(\wt \xi)$ and 
$\mathfrak{o}_7(\wt \xi) = W_7(\wt \xi)$
correspond to the fact that the universal characteristic classes $W_3$ and $W_7$ lie in the image of the map
$G^* \colon H^*(Spin) \to H^*(BSO)$.  The fact that $\mathfrak{o}_8(M) = \Omega(p_c(M))$
is determined by a secondary cohomology operation is surely related to 
facts about $k$-invariant $\kappa \in H^8(P_7(Spin); \Z/2)$ 
of the Postnikov tower of $Spin$, where $P_7(Spin)$ denotes the $7$th Postnikov stage of the space $Spin$.  However, we will not pursue this point of view here.

\subsection{A remark on $9$-dimensional contact topology}
The overtwisted-contact structures constructed in \cite{bem15} obey an $h$-principle and may
therefore be regarded more as topological, as opposed to geometric, structures.
The existence of tight contact structures on 9-manifolds, and more generally $(2q{+}1)$-manifolds, is a difficult problem outside the scope of this paper.  
In \cite{bcs14}, Bowden, the second author
and Stipsicz gave a necessary and sufficient bordism theoretic condition for $M$ to admit a 
Stein fillable contact structure.  For example, by \cite[Theorem 5.4]{bcs14} the only 
Stein fillable homotopy 9-spheres are the standard sphere and the Kervaire 9-sphere
and the existence of tight contact structures on other homotopy 9-spheres is an open problem.
On the other hand, there are various examples of 9-manifolds with tight contact structures
which are not Stein fillable; of these we only mention those in \cite{mass12}.

\subsection{Organisation}
The organisation of the paper follows the outline of the proof of Theorem \ref{thm:main}
presented in Section \ref{ss:outline}.  
In Section \ref{s:W7} we prove that $\mathfrak{o}_7(M) = W_7(M) = 0$.
In Section \ref{s:Mcirc} we consider $\mathfrak{o}_8(M)$ 
and in Section \ref{s:top} we consider $\mathfrak{o}_9(M)$.


\vskip 0.2cm
\noindent
{\bf Acknowledgements:} The authors gratefully acknowledge the support of the China Scholarship 
Council, sponsor for the visit of the second author to the University of Melbourne in 2018.
We would also like to thank Jonathan Bowden for helpful comments.

\section{$W_7$ vanishes for spin$^c$ $9$-manifolds}  \label{s:W7}

In this section we prove Theorem \ref{thm:W7}, which states that $W_7(M) = 0$ if $W_3(M) = 0$.
For this we need the following two results,
where for a $CW$-complex $X$, $TH^*(X)$ denotes the torsion subgroup of $H^*(X)$.
\begin{lemma}\label{lem:W7}
For a closed 
orientable $n$-manifold $N$, the following three statements are equivalent:
\begin{enumerate}
\item[(a)] $W_{7}(N) = 0$;
\item[(b)] there exists an integral class $v \in H^{6}(N)$ such that $\rho_{2}(v) = w_{6}(N)$;
\item[(c)] $t \cdot w_{6}(N) = 0$ holds for any torsion class $t \in TH^{n-6}(N)$.
\end{enumerate}
\end{lemma}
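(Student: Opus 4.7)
\textbf{Proof plan for Lemma~\ref{lem:W7}.} The plan is to treat the three implications separately; (a)$\Leftrightarrow$(b) and (b)$\Rightarrow$(c) are essentially formal, while (c)$\Rightarrow$(a) is the substantive step and is where the orientable manifold structure is used via the linking pairing.

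For (a)$\Leftrightarrow$(b), I would simply read off the Bockstein exact sequence \eqref{eq:bseq} with $j=1$. Since $W_7(N) := \boc(w_6(N))$ by definition, exactness of
\[
H^6(N) \xrightarrow{\rho_2} H^6(N;\Z/2) \xrightarrow{\boc} H^7(N)
\]
gives $W_7(N) = 0$ if and only if $w_6(N) \in \operatorname{im}(\rho_2)$, i.e.\ if and only if an integral lift $v$ exists.

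For (b)$\Rightarrow$(c), suppose $v \in H^6(N)$ satisfies $\rho_2(v) = w_6(N)$. For any $t \in TH^{n-6}(N)$, the cup product $t \cdot v \in H^n(N)\cong\Z$ is a torsion element of a free abelian group, hence zero. By naturality of $\rho_2$ with respect to cup products, $t \cdot w_6(N) = \rho_2(t \cdot v) = 0$.

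The main step is (c)$\Rightarrow$(a). Here I would invoke the classical linking pairing on the closed orientable $n$-manifold $N$,
\[
\ell \colon TH^7(N) \times TH^{n-6}(N) \longrightarrow \Q/\Z,
\]
which is non-degenerate. Since $W_7(N) = \boc(w_6(N))$ is $2$-torsion, it lies in $TH^7(N)$, so by non-degeneracy it suffices to verify $\ell(W_7(N), t) = 0$ for every $t \in TH^{n-6}(N)$. I would then use the standard Bockstein description of $\ell$: viewing $\Z/2 \hookrightarrow \Q/\Z$ by $1 \mapsto 1/2$, the class $w_6(N)$ has a canonical lift $\tfrac{1}{2} w_6(N) \in H^6(N; \Q/\Z)$, and the $\Q/\Z$-Bockstein $H^6(N;\Q/\Z) \to H^7(N)$ sends $\tfrac{1}{2} w_6(N)$ to $W_7(N)$ by naturality. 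The formula for the linking pairing then reads
\[
\ell\bigl(W_7(N), t\bigr) = \bigl\langle \tfrac{1}{2} w_6(N) \cup t,\, [N] \bigr\rangle = \tfrac{1}{2}\bigl\langle w_6(N) \cup \rho_2(t),\, [N] \bigr\rangle \in \Q/\Z,
\]
so $\ell(W_7(N), t) = 0$ if and only if $w_6(N) \cdot \rho_2(t) = 0 \in \Z/2$, which is precisely the vanishing $t \cdot w_6(N) = 0$ asserted in (c).

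The one point requiring care, and the main obstacle, is the clean invocation of the linking pairing and its identification with the Bockstein-plus-cup-product expression above; this is classical but the sign and factor conventions want checking. As a fall-back I would give a more self-contained argument via the universal coefficient isomorphism $TH^7(N) \cong \mathrm{Hom}(TH_6(N), \Q/\Z)$ combined with Poincar\'e duality $H_6(N) \cong H^{n-6}(N)$, which directly converts the vanishing condition (c) into the vanishing of $W_7(N)$.
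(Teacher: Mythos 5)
Your proposal is correct. The paper gives only a one-line proof, observing that all three statements follow from the Bockstein sequence together with the fact (Massey, \emph{On the Stiefel-Whitney classes of a manifold. II}, Lemma 1) that $\rho_2(TH^{n-6}(N))$ is the annihilator of $\rho_2(H^{6}(N))$ under the $\Z/2$-intersection pairing; in other words, (a)$\Leftrightarrow$(b) is the Bockstein, and (b)$\Leftrightarrow$(c) is a double-annihilator argument in mod $2$ Poincar\'e duality using Massey's lemma as a black box. You decompose the cycle differently, proving (b)$\Rightarrow$(c) directly and handling (c)$\Rightarrow$(a) by computing the $\Q/\Z$-linking pairing $\ell(W_7(N), t)$, identifying it with $\tfrac{1}{2}\langle w_6(N)\cup\rho_2(t),[N]\rangle$ and concluding by nondegeneracy. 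The two routes rest on the same underlying dualities --- indeed the linking form is essentially what underlies Massey's annihilator statement --- but your version is more self-contained and explicit, at the modest cost of having to verify the Bockstein compatibility $\beta^{\Q/\Z}\circ(\Z/2\hookrightarrow\Q/\Z)=\boc$ and the standard formula for $\ell$, both of which you flag appropriately. In particular, your identification of $\ell(W_7(N),t)$ in terms of $w_6(N)\cup\rho_2(t)$ is correct (signs are irrelevant since one only tests vanishing), and the fall-back via $TH^7(N)\cong\mathrm{Hom}(TH_6(N),\Q/\Z)$ together with Poincar\'e duality is exactly the unpacking of the same linking-form argument.
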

\begin{proof}
All three statements can be deduced easily from 
the Bockstein sequence \eqref{eq:bseq} and the fact that $\rho_{2}( TH^{n-6}(N) )$ is the annihilator of $\rho_{2} ( H^{6}(N) )$; cf.\ \cite[Lemma 1]{ma62}.
\end{proof}

\begin{theorem}\label{thm:bili}
Let $N$ be a closed oriented $10$-dimensional spin$^{c}$-able manifold. Then for any torsion class $t \in TH^{4}(N)$, 
\begin{equation*}
\langle \rho_{2} (t) \cdot \Sq^{2} ( \rho_{2} (t) ), [N] \rangle = \langle \rho_{2}(t) \cdot w_{6}(N), ~[N] \rangle,
\end{equation*}
where $\langle ~\cdot~, ~\cdot ~\rangle$ is the Kronecker product.
\end{theorem}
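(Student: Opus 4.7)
Setting $x := \rho_{2}(t) \in H^{4}(N; \Z/2)$, the hypothesis that $t$ is integral forces $\Sq^{1} x = 0$; this is the only mod-$2$ consequence of $t \in TH^{4}(N)$ that I plan to exploit. The claim becomes
$$ \langle x \cdot \Sq^{2} x + x \cdot w_{6}(N), [N] \rangle = 0 \in \Z/2. $$

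The plan is to interpret this as the vanishing of a characteristic-number homomorphism out of a singular spin$^{c}$-bordism group, in the spirit of Landweber--Stong \cite{ls87}. Pick $j$ with $2^{j} t = 0$; by exactness of the Bockstein sequence \eqref{eq:bseq} there is $s \in H^{3}(N; \Z/2^{j})$ with $\bocj{j}(s) = t$, so $(N, s)$ represents a class in $\Omega_{10}^{\mathrm{Spin}^{c}}(K(\Z/2^{j}, 3))$, and the expression above descends to a well-defined homomorphism
$$ \Psi_{j} \colon \Omega_{10}^{\mathrm{Spin}^{c}}(K(\Z/2^{j}, 3)) \to \Z/2. $$
The theorem is equivalent to the vanishing of $\Psi_{j}$ for all $j \geq 1$.

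To prove $\Psi_{j} = 0$, I would compute the $2$-primary part of $\Omega_{10}^{\mathrm{Spin}^{c}}(K(\Z/2^{j}, 3))$, e.g.\ via the Atiyah--Hirzebruch spectral sequence or via the Anderson--Brown--Peterson splitting of $\mathbf{MSpin}^{c}$. The degree-$10$ piece is finitely generated, and on each explicit product generator the evaluation of $\Psi_{j}$ reduces to a direct mod-$2$ computation, using the Wu formula
$$ w_{6}(N) = \Sq^{1} v_{5} + \Sq^{2} v_{4} + \Sq^{3} v_{3} + \Sq^{4} v_{2} $$
(valid since $N$ is orientable, so $v_{1} = 0$), the Cartan formula combined with $\Sq^{1} x = 0$, Wu duality $\langle \Sq^{i} y, [N] \rangle = \langle v_{i}(N) \cdot y, [N] \rangle$, and the spin$^{c}$ identity $\Sq^{1} w_{2}(N) = w_{3}(N) = 0$.

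The main obstacle will be to identify a manageable generating set for $\Omega_{10}^{\mathrm{Spin}^{c}}(K(\Z/2^{j}, 3))$ together with the characteristic classes needed to evaluate $\Psi_{j}$. Once the generators are in hand, the individual verifications should be routine applications of the formulas above; but organising the spectral-sequence data, and checking that the spin$^{c}$-correction $x \cdot w_{6}(N)$ exactly absorbs the discrepancy between the spin$^{c}$ identity and the spin identity of Landweber--Stong, is the subtle step.
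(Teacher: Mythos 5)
Your bordism-theoretic framing is sound and shares the same general spirit as the paper's proof: both interpret the claimed identity as the vanishing of a characteristic-number homomorphism out of a singular spin$^c$-bordism group (the paper uses $\wt\Omega^{Spin^c}_{10}(K(\Q/\Z,3))$, which packages all torsion orders at once, where you use $\Omega^{Spin^c}_{10}(K(\Z/2^j,3))$ for each $j$). But the completion strategy you propose --- compute the bordism group, find explicit generators, and evaluate $\Psi_j$ on them --- is not what the paper does, and it is the point where your plan is likely to stall. Computing $\Omega^{Spin^c}_{10}(K(\Z/2^j,3))$ together with manifold representatives for all its classes is a substantial undertaking that only gets harder as $j$ grows, and you explicitly defer it as ``the main obstacle.'' The paper avoids this entirely.

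What the paper does instead is a structural reduction. It introduces cofibrations $\Sigma^{r-4}K(\Q/\Z,3)\to K(\Z,r)\to Y_r$ and $\Sigma^{r-4}K(\Z,4)\to K(\Z,r)\to X_r$, uses the Landweber--Stong relation (Lemma~\ref{lem:sq5}) together with the torsion of a specific class $t_Y\in H^{r+5}(Y_r)$ to show that the characteristic-number homomorphism kills the image of the boundary map (Equation~\eqref{eq:varphi0}), and hence factors through the image in $H_6(BSpin^c)$ under the stable Thom isomorphism $\wt\Omega^{Spin^c}_{r+6}(K(\Z,r))\cong H_6(BSpin^c)$. This produces a class $\theta\in H^6(BSpin^c;\Z/2)$ well-defined modulo $\rho_2(H^6(BSpin^c))$, and since that quotient is $\Z/2$ generated by $[w_6]$, a \emph{single} example --- the Diaconescu--Moore--Witten spin $10$-manifold with $W_7\neq0$ --- suffices to pin down $\theta=w_6$ up to the indeterminacy. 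This factorization through $H_6(BSpin^c)$ is the key idea your proposal is missing; without it, checking the identity on a generating set of the full bordism group is both unnecessary and far harder than the problem actually requires.

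One smaller remark: your opening line, that $\Sq^1 x=0$ is ``the only mod-$2$ consequence of $t\in TH^4(N)$'' you plan to exploit, undersells your own setup. The lift of $t$ to $s\in H^3(N;\Z/2^j)$ encodes the full $2$-power torsion structure of $t$, and that structure is used essentially in the paper's proof (the vanishing of $c\cdot f^*(t_Y)$ in the free group $H^{r+7}(W,\partial W)$ relies on $t_Y$ being torsion, not merely on $\Sq^1$ vanishing). If you truly only used $\Sq^1 x=0$ you would be working in $\Omega^{Spin^c}_{10}(K(\Z,4))$ rather than $\Omega^{Spin^c}_{10}(K(\Z/2^j,3))$, and the needed vanishing is not expected to hold there.
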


The proof of Theorem \ref{thm:bili} takes the bulk of this section and is presented
after the proof of Theorem \ref{thm:W7} below.

\begin{remark}
For a closed oriented spinnable $(8k{+}2)$-dimensional manifold $N$, 
Landweber and Stong \cite[Proposition 1.1]{ls87} proved that for any $z \in H^{4k}(N)$, 
\[ \langle \rho_{2}(z) \cdot \Sq^{2} ( \rho_{2} (z) ), [N] \rangle = \langle \rho_{2}(z) \cdot \Sq^{2} ( v_{4k}(N) ), [N] \rangle, \]
where $v_{4k}(N)$ is the $4k$-th Wu class of $N$. 
In fact, Theorem \ref{thm:bili} can be generalised to spin$^c$-able $(8k{+}2)$-manifolds for 
all $k$
and we plan to demonstrate this in future work.
\end{remark}

In order to apply Lemma \ref{lem:W7} and Theorem \ref{thm:bili} to prove Theorem \ref{thm:W7} 
we need some further notation and terminology.
For any $CW$-complex $X$ and any coefficient group $G$, denote by $\Sigma X$ the suspension of $X$ and   
$$\sigma \colon H^{\ast}(X; G) \rightarrow H^{\ast +1}(\Sigma X; G)$$
the suspension isomorphism. Let
$$i_{n} \in H^{n}(K(n, \Z))~ (\text{resp.}~ i_{n}^{T} \in H^{n}(K(\Q/\Z, n); \Q/\Z))$$
be the fundamental class of the Eilenberg-MacLane space $K(n, \Z)$ (resp.\ $K(\Q/\Z, n)$) and 
$$\bar{\beta} \colon K(\Q/\Z, n) \rightarrow K(\Z, n{+}1)$$
be the Bockstein map corresponding to the Bockstein homomorphism 
\[ \bocqz \colon H^{n}(K(\Q/\Z, n); \Q/\Z) \rightarrow H^{n+1}(K(\Q/\Z, n)) \]
assocciated to the short exact sequence $\Z \to \Q \to \Q/\Z$.
By definition, we have
\begin{equation}\label{eq:beta}
\bar{\beta}^{\ast} (i_{n+1}) = \bocqz (i_{n}^{T}).
\end{equation}
For any $t \in H^{n+1}(X)$ and $z \in H^{n}(X; \Q/\Z)$,
denote by 
$$f_{t} \colon X \rightarrow K(\Z, n{+}1) ~ (\text{resp.} f_{z} \colon X \rightarrow K(\Q/\Z, n))$$ 
the map such that 
$$f_{t}^{\ast} (i_{n+1}) = t ~(\text{resp}.~ f_{z}^{\ast} (i_{n}^{T}) = z).$$
Suppose that $\bocqz (z) = t$.
Then by the definition of $\bar{\beta}$, we have 
\begin{equation}\label{eq:betaf}
f_{t} = \bar{\beta} \circ f_{z}.
\end{equation}
Finally, let $\psi \colon \Sigma^{k}K(\Z, n) \rightarrow K(\Z, n{+}k)$ denote
the map for which $\psi^{\ast} (i_{n+k})= \sigma^{k} (i_{n})$, where $\sigma^{k}$ means the $k$-fold composition of $\sigma$.

\begin{proof}[Proof of Theorem \ref{thm:W7}]
Let $\Omega^{Spin^c}_*(X)$ denote the spin$^c$-bordism groups of a space $X$.
An element of $[N, f] \in \Omega^{Spin^c}_n(X)$ is represented by a map $f \colon N \to X$
from a closed spin$^c$ $n$-manifold $N$.
We define the following homomorphisms
%
$$ \begin{array}{ll}
\cdot w_{6} \colon \Omega_{9}^{Spin^{c}}(K(\Z, 3)) \rightarrow \Z/2, &
\qquad [N, f] \mapsto \langle f^{\ast}(i_{3}) \cdot w_{6}(N), [N] \rangle,\\
\cdot w_{6} \colon \Omega_{10}^{Spin^{c}}(\Sigma K(\Z, 3)) \rightarrow \Z/2, &
\qquad [N, f] \mapsto \langle f^{\ast} ( \sigma (i_{3}) ) \cdot w_{6}(N), [N] \rangle,\\
\cdot w_{6} \colon \Omega_{10}^{Spin^{c}}(K(\Z, 4)) \rightarrow \Z/2, &
\qquad [N, f] \mapsto \langle f^{\ast} (i_{4}) \cdot w_{6}(N), [N] \rangle,
\end{array} $$
%
which fit into the following commutative diagram:
\begin{equation*}
\begin{split}
\xymatrix{
& \Omega_{9}^{Spin^{c}}(K(\Q/\Z, 2)) \ar[r]^-{\Sigma} \ar[d]_{\bar{\beta}_{\ast}} & \Omega_{10}^{Spin^{c}}(\Sigma K(\Q/\Z, 2))  \ar[d]_{\Sigma\bar{\beta}_{\ast}} & \\
& \Omega_{9}^{Spin^{c}}(K(\Z, 3)) \ar[r]^-{\Sigma} \ar[d]_{\cdot w_{6}} & \Omega_{10}^{Spin^{c}}(\Sigma K(\Z, 3)) \ar[r]^{\psi_{\ast}} \ar[d]_{\cdot w_{6}} & \Omega_{10}^{Spin^{c}}(K(\Z, 4)) \ar[d]_{\cdot w_{6}}\\
& \Z/2 \ar@{=}[r] & \Z/2 \ar@{=}[r] & \Z/2
}
\end{split}
\end{equation*}
For any $[N, f] \in \Omega_{10}^{Spin^{c}}(\Sigma K(\Q/\Z, 2))$,
\begin{align*}
 \cdot w_{6} \circ \psi_{\ast} \circ \Sigma \bar{\beta}_{\ast}  ([N, f]) = \cdot w_{6} ([N, \psi \circ \Sigma \bar{\beta} \circ f]) = \langle  f^{\ast} \circ \Sigma \bar{\beta}^{\ast} \circ \psi^{\ast} (i_{4}) \cdot w_{6}(N), [N] \rangle.
\end{align*}
Set 
\[ t := \Sigma \bar{\beta}^{\ast} \circ \psi^{\ast} (i_{4}) \in H^{4}(\Sigma K(\Q/\Z, 2)). \]
Obviously $t$ is a torsion class, hence $f^{\ast} (t) \in TH^{4}(N)$.
Since the cup product on
$\wt H^{*}(\Sigma K(\Q/\Z, 2))$ is trivial, it follows from Theorem \ref{thm:bili} that  
\begin{equation*}
\cdot w_{6} \circ \psi_{\ast} \circ \Sigma \bar{\beta}_{\ast} ([N, f]) = \langle f^{\ast} (t) \cdot w_{6}(N), [N] \rangle = \langle f^{\ast} ( t \cdot \Sq^{2} ( t ) ), [N] \rangle = 0.
\end{equation*}
This means that the homomorphism $\cdot w_{6} \circ \psi_{\ast} \circ \Sigma \bar{\beta}_{\ast}$ is trivial. Thus
\begin{equation}\label{eq:w6beta}
\cdot w_{6} \circ \bar{\beta}_{\ast}  = 0.
\end{equation}

Now every $t \in TH^{3}(M)$ satisfies 
$\bocqz (z) = t$ for some
$z \in H^{2}(M; \Q/\Z)$.
Hence $\bar{\beta} \circ f_{z} = f_{t}$ by Equation \eqref{eq:betaf}. 
Then by Equation \eqref{eq:w6beta} we have
\begin{align*}
\langle t \cdot w_{6}(M), [M] \rangle =  \cdot w_{6} ([M, f_{t}]) = \cdot w_{6} ([M, \bar{\beta} \circ f_{z}]) =  \cdot w_{6} \circ \bar{\beta}_{\ast} ([M, f_{z}]) = 0.
\end{align*}
By Lemma \ref{lem:W7}, $W_{7}(M) =  0$.
\end{proof}


The remainder of this section is devoted to the proof of Theorem \ref{thm:bili}.
For large $r$, we consider the following two cofibrations
\begin{align}
\Sigma^{r-4}K(\Q/\Z, 3) &\xrightarrow{\bar{\psi}} K(\Z,r) \xrightarrow{\bar{h}} Y_{r}, \label{eq:cofyr} \\
\Sigma^{r-4}K(\Z, 4) &\xrightarrow{\psi} K(\Z,r) \xrightarrow{h} X_{r},\label{eq:cofxr}
\end{align}
where $\bar{\psi} := \psi \circ \Sigma^{r-4} \bar{\beta}$ is the indicated composition.
By construction, there is a map 
$$\phi \colon Y_{r} \rightarrow X_{r}$$
such that the cofibrations \eqref{eq:cofxr} and \eqref{eq:cofyr} fit into the commutative diagram:
\begin{equation}\label{eq:cof}
\begin{split}
\xymatrix{
\Sigma^{r-4}K(\Q/\Z, 3) \ar[r]^-{\bar{\psi}} \ar[d]_{\Sigma^{r-4} \bar{\beta}} & K(\Z, r) \ar@{=}[d] \ar[r]^-{\bar{h}}  & Y_{r}  \ar[d]_{\phi} \\
\Sigma^{r-4}K(\Z, 4) \ar[r]^-{\psi}  & K(\Z, r) \ar[r]^-{h}  & X_{r}. 
}
\end{split}
\end{equation}
Therefore, we have the exact ladder of cohomology groups with coefficients $G=\Z$ or $\Z/2$,
\begin{equation}\label{eq:hxryr}
\begin{split}
\xymatrix{
\cdots \ar[r]^-{\delta} & H^{\ast}(X_{r}; G) \ar[r]^-{h^{\ast}} \ar[d]^{\phi^{\ast}} & H^{\ast}(K(\Z, r); G) \ar[r]^-{\psi^{\ast}} \ar@{=}[d] & H^{\ast}(\Sigma^{r-4}K(\Z, 4); G) \ar[r]^-{\delta} \ar[d]_{\Sigma^{r-4} \bar{\beta}^{\ast}} & \cdots \\
\cdots \ar[r]^-{\delta} & H^{\ast}(Y_{r}; G) \ar[r]^-{\bar{h}^{\ast}} & H^{\ast}(K(\Z, r); G) \ar[r]^-{\bar{\psi}^{\ast}} & H^{\ast}(\Sigma^{r-4}K(\Q/\Z, 3); G) \ar[r]^-{\delta} & \cdots~,
}
\end{split}
\end{equation}
where the top and bottom lines are the cohomology long exact sequences of
the cofibrations \eqref{eq:cofyr} and \eqref{eq:cofxr} respectively.

By analysing the behavior of the homomorphism $\psi^{\ast}$ when $G=\Z/2$ 
(cf.\ 
\cite[pp.\,\,627-628]{ls87}), one sees that 
\begin{align}
H^{r+5}(X_{r}; \Z/2) & \cong \Z/2  &&\text{~with generator~} \overline{\Sq^{5} ( i_{r} )}, \label{eq:hxr5}\\
H^{r+7}(X_{r}; \Z/2) & \cong \Z/2\oplus \Z/2 &&\text{~with generators~} \overline{\Sq^{7} ( i_{r} ) } \text{~and~} \delta \circ \sigma^{r-4} ( i_{4} \cdot \Sq^{2}  ( i_{4} ) ), \label{eq:hxr7}
\end{align}
where $\overline{x} \in H^{\ast}(X_{r}; \Z/2)$ denotes a class 
such that $h^{\ast}(\overline{x})=x \in H^{\ast}(K(\Z, r); \Z/2)$.
Moreover, Landweber and Stong \cite[p.\,\,628 Claim]{ls87} proved the following

\begin{lemma}\label{lem:sq5}
The generators above satisfy
\begin{equation*}
\pushQED{\qed}
\Sq^{2} \left( \overline{\Sq^{5} ( i_{r} )} \right) = \delta \circ \sigma^{r-4} ( i_{4} \cdot \Sq^{2} ( i_{4} ) ).  \qedhere
\popQED
\end{equation*}
\end{lemma}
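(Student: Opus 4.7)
The plan is to apply $h^*$ to both sides of the claimed identity, use the cohomology decomposition \eqref{eq:hxr7} to reduce to a single $\Z/2$-coefficient question, and then pin down that coefficient via a secondary cohomology operation.

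First, since $i_r$ is the mod~$2$ reduction of the integral fundamental class, $\Sq^1 i_r = 0$, and the Adem relation $\Sq^2 \Sq^5 = \Sq^6 \Sq^1$ forces $h^*(\Sq^2 \overline{\Sq^5 i_r}) = \Sq^2 \Sq^5 i_r = \Sq^6 \Sq^1 i_r = 0$. The right-hand side $\delta \sigma^{r-4}(i_4 \cdot \Sq^2 i_4)$ lies in $\ker h^*$ by exactness of \eqref{eq:hxryr}, so both sides live in $\ker h^* \cap H^{r+7}(X_r;\Z/2)$. I would then identify this kernel as the cokernel of $\psi^*$ in degree $r+6$: $H^{10}(K(\Z,4);\Z/2)$ is spanned by $\Sq^4 \Sq^2 i_4$ and the decomposable class $i_4 \cdot \Sq^2 i_4$, and $\psi^*$ hits the first class (via $\psi^*(\Sq^4 \Sq^2 i_r)$) but annihilates every decomposable source class, since the cup product is trivial on the suspension $\Sigma^{r-4} K(\Z,4)$. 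Hence the cokernel is one-dimensional over $\Z/2$, generated by $\delta \sigma^{r-4}(i_4 \cdot \Sq^2 i_4)$, and we are reduced to showing $\Sq^2 \overline{\Sq^5 i_r} = \epsilon \cdot \delta \sigma^{r-4}(i_4 \cdot \Sq^2 i_4)$ with $\epsilon = 1$.

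To identify $\epsilon$ I would invoke the secondary cohomology operation $\Phi$ attached to the Adem relation $\Sq^2 \Sq^5 + \Sq^6 \Sq^1 = 0$: on classes $u$ with $\Sq^1 u = 0 = \Sq^5 u$, $\Phi(u)$ is defined modulo the indeterminacy $\Sq^2 H + \Sq^6 H$. Applied to $\psi^*(i_r) = \sigma^{r-4} i_4$ on $A = \Sigma^{r-4}K(\Z,4)$, both vanishing hypotheses hold (integrality, and instability using $\Sq^5 i_4 = 0$). The Peterson-Stein formula for the cofibration \eqref{eq:cofxr} then gives $\delta \Phi(\sigma^{r-4} i_4) = \Sq^2 \overline{\Sq^5 i_r}$, the companion term $\Sq^6 \overline{\Sq^1 i_r}$ vanishing because $H^{r+1}(X_r;\Z/2) = 0$ forces $\overline{\Sq^1 i_r} = 0$. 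In the bidegree of $\Phi(i_4)$ the indeterminacy $\Sq^2 H^8(K(\Z,4);\Z/2) + \Sq^6 H^4(K(\Z,4);\Z/2)$ collapses to zero (using $\Sq^2 i_4^2 = 0$ by Cartan and $\Sq^6 i_4 = 0$ by instability), so $\Phi(i_4)$ is an honest element of $H^{10}(K(\Z,4);\Z/2)$.

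The main obstacle is then the universal-example computation $\Phi(i_4) = i_4 \cdot \Sq^2 i_4$, from which $\epsilon = 1$ follows by naturality of $\Phi$ under suspension. In practice this last step can be carried out either by Kristensen's chain-level formula for Steenrod squares on a mapping cone, applied directly to the universal cocycle for $i_r$, or by evaluating the two candidate classes $\Sq^4 \Sq^2 i_4$ and $i_4 \cdot \Sq^2 i_4$ against a detecting 10-dimensional complex on which their pullbacks are linearly independent; this is the substance of the claim on \cite[p.\,628]{ls87}.
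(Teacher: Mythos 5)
The paper itself offers no argument for this lemma; it is quoted directly from Landweber and Stong \cite[p.\,628]{ls87} with a $\qed$ in place of a proof. Your sketch does supply real structure beyond the bare citation: the Adem relation $\Sq^2\Sq^5 = \Sq^6\Sq^1$ together with $\Sq^1 i_r = 0$ puts $\Sq^2\bigl(\overline{\Sq^5(i_r)}\bigr)$ in $\ker h^*$; your cokernel analysis (decomposables in the source killed because cup products vanish on a suspension, $\Sq^6 i_4 = 0$ by instability, $\Sq^4\Sq^2 i_4$ hit by $\psi^*$) correctly shows that $\ker h^* \cap H^{r+7}(X_r;\Z/2)$ is the $\Z/2$ spanned by $\delta\sigma^{r-4}(i_4\cdot\Sq^2(i_4))$, reducing the lemma to a single nonvanishing statement in $\Z/2$; and the Peterson--Stein argument for the cofibration \eqref{eq:cofxr}, using the secondary operation $\Phi$ attached to $\Sq^2\Sq^5 + \Sq^6\Sq^1 = 0$, with the check that its indeterminacy on $\sigma^{r-4}(i_4)$ collapses to zero, is a legitimate mechanism for deciding that bit.

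However, there is a genuine gap exactly where all the content of the lemma lives. The universal-example computation $\Phi(i_4) = i_4\cdot\Sq^2(i_4)$ --- equivalently, the nonvanishing of the decomposable component of $\Phi(i_4)$ in $H^{10}(K(\Z,4);\Z/2)$, which is what forces your coefficient $\epsilon$ to equal $1$ --- is flagged by you as the ``main obstacle'' and then not carried out: you say only that it ``can be carried out'' by Kristensen's chain-level methods or by producing a detecting $10$-complex, and you note explicitly that this is ``the substance of the claim'' in Landweber--Stong. As written, your proposal is therefore a careful reduction of Lemma~\ref{lem:sq5} to the cited claim, not an independent proof of it. To close the argument you would need to actually perform the computation of $\Phi(i_4)$ in the universal example, which is precisely the content that the Landweber--Stong citation is carrying for the paper.
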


By analyzing the cohomology groups of $X_{r}$ and $Y_{r}$ we obtain

\begin{lemma}\label{lem:tor}
The group $H^{r+5}(Y_{r})$ is torsion and there is a class $t_{Y} \in H^{r+5}(Y_{r})$ such that 
$$\rho_{2} (t_{Y}) = \phi^{\ast} \left( \overline{ \Sq^{5} ( i_{r} ) } \right).$$
\end{lemma}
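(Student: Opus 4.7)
My plan is to establish the torsion property by a sandwich argument in the integer-coefficient long exact sequence of \eqref{eq:cofyr}, then to construct $t_Y$ by integrally lifting the universal class $\Sq^5 i_r$ on $K(\Z,r)$ and transporting it across $\bar h^*$. For the torsion claim, the relevant portion of the sequence is
\[
H^{r+4}(\Sigma^{r-4}K(\Q/\Z, 3)) \to H^{r+5}(Y_r) \to H^{r+5}(K(\Z, r)).
\]
The right-hand group is torsion for $r > 5$ by the classical calculation of integer Eilenberg--MacLane cohomology: the only free summand below $H^{2r}$ (which is generated by $i_r^2$) is $\Z \cdot i_r$ in degree $r$, so everything strictly between degrees $r$ and $2r$ is torsion. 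The left-hand group is isomorphic to $H^{8}(K(\Q/\Z, 3); \Z)$, which is torsion because $\Q/\Z \otimes \Q = 0$ forces $K(\Q/\Z, 3)$ to be rationally contractible, so its reduced integer cohomology is torsion in all positive degrees. Sandwiched between two torsion groups, $H^{r+5}(Y_r)$ is torsion.

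To produce the class $t_Y$, I would invoke the Adem relation $\Sq^1 \Sq^4 = \Sq^5$ to obtain
\[
\Sq^5 i_r \;=\; \rho_2 \bigl( \boc(\Sq^4 i_r) \bigr) \;\in\; H^{r+5}(K(\Z, r); \Z/2),
\]
so that $\tilde x := \boc(\Sq^4 i_r) \in H^{r+5}(K(\Z, r))$ is an integer lift of $\Sq^5 i_r$. I would then verify $\bar\psi^*(\tilde x) = 0$ using the decomposition $\bar\psi^* = \Sigma^{r-4}\bar\beta^{*} \circ \psi^*$ together with naturality of $\boc$ and $\Sq^4$:
\[
\psi^*(\tilde x) \;=\; \boc \Sq^4 \psi^*(i_r) \;=\; \boc \Sq^4 \sigma^{r-4}(i_4) \;=\; \boc \sigma^{r-4}\!\bigl( \rho_2(i_4^2) \bigr) \;=\; 0,
\]
where the last equality uses that $\boc$ kills mod-$2$ reductions. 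By exactness of the bottom row of \eqref{eq:hxryr} with $G = \Z$, there exists $t_Y \in H^{r+5}(Y_r)$ with $\bar h^*(t_Y) = \tilde x$.

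It remains to check $\rho_2(t_Y) = \phi^{*}(\overline{\Sq^5 i_r})$. Applying $\bar h^*$ to both sides and using $\bar h^* \circ \phi^* = h^*$ from \eqref{eq:cof}, each reduces to $\rho_2(\tilde x) = \Sq^5 i_r$, so the difference lies in $\ker \bar h^* = \delta \bigl( H^{r+4}(\Sigma^{r-4} K(\Q/\Z, 3); \Z/2) \bigr)$. The freedom in the choice of $t_Y$ is precisely $\delta \bigl( H^{r+4}(\Sigma^{r-4} K(\Q/\Z, 3)) \bigr)$, so matching the two sides reduces to lifting the discrepancy class from $\Z/2$-- to $\Z$--coefficients. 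Equivalently, using the Bockstein exact sequence for $Y_r$, existence of a suitable $t_Y$ amounts to $\boc \phi^{*}(\overline{\Sq^5 i_r}) = \phi^{*}(\boc \overline{\Sq^5 i_r}) = 0$; chasing \eqref{eq:cof} with $G = \Z$ and using $\boc(\Sq^5 i_r) = 0$ (itself a consequence of $\Sq^5 i_r \in \mathrm{image}(\rho_2)$) writes this obstruction as $\delta_{Y}\bigl( \Sigma^{r-4}\bar\beta^{*}(w) \bigr)$ for some $w \in H^{9}(K(\Z, 4))$.

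The main obstacle is this final identification --- ensuring the mod-$2$ reduction of the constructed $t_Y$ equals $\phi^{*}(\overline{\Sq^5 i_r})$ exactly, rather than up to a $\delta$-correction. This rests on the interaction between the Bockstein $\bar\beta^{*}$ and the image of $\bar\psi^*$, which in turn depends on the 2-primary structure of $H^{*}(K(\Q/\Z, 3))$ in low degrees; the payoff is a clean class $t_Y$ whose mod-$2$ reduction matches $\phi^{*}(\overline{\Sq^5 i_r})$ on the nose, as required for the subsequent identification of the relevant cohomology classes on $X_r$ and $Y_r$.
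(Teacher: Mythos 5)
Your torsion argument matches the paper's exactly: you quote the same sandwich
\[
H^{r+4}\bigl(\Sigma^{r-4}K(\Q/\Z, 3)\bigr) \to H^{r+5}(Y_r) \to H^{r+5}\bigl(K(\Z, r)\bigr)
\]
from the bottom row of \eqref{eq:hxryr} with $G=\Z$, together with the torsion-ness of the two end terms. (You give slightly more justification for those facts than the paper does, which is a small plus.) So that half is fine.

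For the existence of $t_Y$, you take a genuinely different route, and you have correctly identified where it gets stuck. You construct a specific integral lift $\tilde{x} = \boc(\Sq^4 \rho_2 i_r)$ of $\Sq^5 \rho_2 i_r$ on $K(\Z, r)$ via the Adem relation, show $\bar{\psi}^\ast(\tilde{x}) = 0$, and pull it back to a class $t_Y \in H^{r+5}(Y_r)$ with $\bar{h}^\ast(t_Y) = \tilde{x}$. The problem is exactly the one you flag: $\rho_2(t_Y)$ and $\phi^\ast(\overline{\Sq^5 i_r})$ agree only after applying $\bar{h}^\ast$, so they differ by an element of $\delta\bigl(H^{r+4}(\Sigma^{r-4}K(\Q/\Z,3); \Z/2)\bigr)$, and you have no control over whether that discrepancy lies in $\rho_2\bigl(\delta(H^{r+4}(\Sigma^{r-4}K(\Q/\Z,3)))\bigr)$. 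Note also that even if you can identify $\beta^{\Z/2}\,\phi^\ast(\overline{\Sq^5 i_r}) = \phi^\ast\beta^{\Z/2}(\overline{\Sq^5 i_r})$ as the obstruction, the fact that $\boc(\Sq^5 \rho_2 i_r) = 0$ on $K(\Z,r)$ does not tell you $\boc(\overline{\Sq^5 i_r}) = 0$ on $X_r$, which is the class that actually matters; the bar is doing real work.

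The paper sidesteps this entirely by lifting on the $X_r$ side, where the structure is much more rigid. The key computation (Equation \eqref{eq:rp5}) is that $X_r$ is $(r+4)$-connected, hence $H^{r+5}(X_r)$ is torsion-free, and a rank count in the top row of \eqref{eq:hxryr} with $G=\Z$ gives $H^{r+5}(X_r) \cong \Z$. Since $H^{r+5}(X_r; \Z/2) \cong \Z/2$ by \eqref{eq:hxr5}, the Bockstein sequence forces $\rho_2 \colon H^{r+5}(X_r) \to H^{r+5}(X_r; \Z/2)$ to be surjective, so $\overline{\Sq^5 i_r}$ lifts to some $x \in H^{r+5}(X_r)$, and $t_Y := \phi^\ast(x)$ then satisfies $\rho_2(t_Y) = \phi^\ast(\overline{\Sq^5 i_r})$ on the nose by naturality. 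The computation $H^{r+5}(X_r) \cong \Z$ is thus not a detail but the essential input; any version of your argument that avoids it will, one way or another, need an equivalent integral fact about $X_r$ to kill the Bockstein obstruction you have (correctly) located.
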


\begin{proof}
Recall that the cohomology groups $H^{r+4}(\Sigma^{r-4}K(\Q/\Z, 3))$ and $H^{r+5}(K(\Z, r))$ are torsion groups. Hence from the bottom line of the exact ladder \eqref{eq:hxryr} when $G = \Z$,
we can easily deduce that $H^{r+5}(Y_{r})$ is a torsion group.

By the construction of $X_{r}$ and the Freudenthal suspension theorem we see that 
$X_{r}$ is $(r{+}4)$-connected. 
Therefore, the universal coefficient theorem implies that $H^{r+5}(X_{r})$ is torsion free. Note that 
for $i=4,5,$
$H^{r+i}(K(\Z,r))$ is a torsion group and we have
\begin{equation*}
H^{r+4}(\Sigma^{r-4}K(\Z,4))/\mathrm{torsion} \cong H^{8}(K(\Z,4))/\mathrm{torsion} \cong \Z.
\end{equation*}
Then it follows from the top line of the exact ladder \eqref{eq:hxryr} when $G=\Z$ that
\begin{equation}\label{eq:rp5}
H^{r+5}(X_{r}) \cong \Z.
\end{equation}
Now the Bockstein sequence \eqref{eq:bseq} implies that there must exists a class $x \in H^{r+5}(X_{r})$ such that $\rho_{2} (x) = \overline{ \Sq^{5} ( i_{r} ) }$. Set 
$$t_{Y} := \phi^{\ast} (x) \in H^{r+5}(Y_{r}).$$
Then $t_{Y}$ is a torsion class and 
\begin{equation*}
\rho_{2} (t_{Y}) = \rho_{2} ( \phi^{\ast} (x) ) = \phi^{\ast} ( \rho_{2} (x) ) = \phi^{\ast} 
 \left( \overline{ \Sq^{5} ( i_{r} ) } \right)\!.
\end{equation*}
\end{proof}

\begin{lemma}\label{lem:theta}
There is a class $\theta \in H^{6}(BSpin^{c};\Z/2)$, such that for any spin$^{c}$ $10$-dimensional manifold $N$ and any torsion class $t\in H^{4}(N)$, we have
\begin{equation*}
\langle \rho_{2} (t) \cdot \Sq^{2} ( \rho_{2}(t) ),\,[N] \rangle = 
\langle \tau^{\ast} (\theta) \cdot \rho_{2} (t), \,[N] \rangle,
\end{equation*}
where $\tau\colon N\rightarrow BSpin^{c}$ classifies the stable spin$^c$ tangent bundle of $N$.
\end{lemma}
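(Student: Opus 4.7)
My plan is to construct $\theta$ by passing to the universal setting built on the cofibration \eqref{eq:cofyr}, using Lemma~\ref{lem:sq5}, and then to extract a tangent-bundle characteristic class via Wu's formula.

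First I would reduce to a universal statement. The desired identity is equivalent to saying that, for a suitable $\theta \in H^{6}(BSpin^{c};\Z/2)$, the assignment
\[
[N,\tau,t] \mapsto \langle \rho_{2}(t)\bigl(\Sq^{2}\rho_{2}(t) - \tau^{\ast}\theta\bigr),[N]\rangle \in \Z/2
\]
vanishes on spin$^{c}$ 10-manifolds equipped with a torsion class $t \in TH^{4}(N)$. For $r$ large, the torsion condition on $t$ means that the classifying map $f_{t}\colon N \to K(\Z,r)$ (after $(r{-}4)$-fold suspension) factors through $\bar\psi$ and hence becomes null after $\bar h$. Equivalently, via Lemma~\ref{lem:tor}, $\rho_{2}(t)$ is pulled back from $\phi^{\ast}(\overline{\Sq^{5} i_{r}}) \in H^{r+5}(Y_{r};\Z/2)$, so it suffices to produce $\theta$ killing the invariant universally on $Y_{r}$.

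Next I would use Lemma~\ref{lem:sq5} and naturality of $\Sq^{2}$ to rewrite the universal product as
\[
\phi^{\ast}(\overline{\Sq^{5} i_{r}})\cdot \Sq^{2}\phi^{\ast}(\overline{\Sq^{5} i_{r}}) = \phi^{\ast}\bigl(\overline{\Sq^{5} i_{r}}\cdot \delta\sigma^{r-4}(i_{4}\cdot \Sq^{2} i_{4})\bigr).
\]
The appearance of the connecting homomorphism $\delta$ from \eqref{eq:cofxr} is crucial: the Leibniz rule for $\delta$ in the long exact sequence allows this product to be transported, via the commutative ladder \eqref{eq:hxryr}, to a class lying in the image of $\bar h^{\ast}$ modulo terms that are annihilated by the spin$^{c}$ bordism pairing on $N$. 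Evaluating against $[N]$ and applying the Wu formula $\langle \Sq^{2} x,[N]\rangle = \langle v_{2}(N) x,[N]\rangle = \langle w_{2}(N) x,[N]\rangle$ for $x \in H^{8}(N;\Z/2)$, together with the Cartan formula and the identity $\Sq^{1}\rho_{2}(t) = 0$, I would reorganise $\langle \rho_{2}(t)\Sq^{2}\rho_{2}(t),[N]\rangle$ into a sum of terms of the form $\langle \rho_{2}(t)\cdot c,[N]\rangle$, where $c \in H^{6}(N;\Z/2)$ is a polynomial in the Stiefel--Whitney classes of $N$. By the universal construction this polynomial is the same for every torsion $t$, and so $c$ lifts to the desired $\theta \in H^{6}(BSpin^{c};\Z/2)$.

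The main obstacle will be controlling the error terms introduced in the $\delta$-Leibniz step and verifying that, after pairing with $\rho_{2}(t)$ and $[N]$, they all assemble into a \emph{single} tangent-bundle class of degree $6$ that is independent of $t$. I expect this to follow from the naturality of the ladder \eqref{eq:hxryr}, from Poincar\'e duality on closed spin$^{c}$ 10-manifolds, and from the cohomological identities $\Sq^{1}\rho_{2}(t) = 0$ and $v_{1} = v_{3} = 0$ on spin$^{c}$ manifolds; together these should force the correction to be a polynomial in the Stiefel--Whitney classes of $\tau$ alone, producing $\theta$.
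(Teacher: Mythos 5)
There is a genuine gap, and it lies exactly where you acknowledge the ``main obstacle.'' Your proposal shares ingredients with the paper's proof (the cofibrations \eqref{eq:cofyr}--\eqref{eq:cofxr}, Lemma~\ref{lem:sq5}, the Wu formula), but the mechanism by which the class $\theta$ is actually produced is missing, and the intermediate steps don't fit together.

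Concretely: the claim that ``$\rho_{2}(t)$ is pulled back from $\phi^{\ast}(\overline{\Sq^{5}i_{r}})\in H^{r+5}(Y_{r};\Z/2)$'' is dimensionally impossible on a $10$-manifold $N$ — the class $\phi^{\ast}(\overline{\Sq^{5}i_{r}})$ only becomes relevant on the $(r{+}7)$-dimensional bordism $W$ that appears when one passes to $\wt{\Omega}^{Spin^{c}}_{r+7}(Y_{r})$, not on $N$ itself. Likewise the product $\phi^{\ast}(\overline{\Sq^{5}i_{r}})\cdot \Sq^{2}\phi^{\ast}(\overline{\Sq^{5}i_{r}})$ lives in degree $2r{+}12$ and has no direct bearing on the degree-$10$ evaluation on $N$. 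The proposed ``Leibniz rule for $\delta$'' step is therefore working in the wrong place, and the assertion that the resulting error terms ``assemble into a single tangent-bundle class of degree $6$ that is independent of $t$'' is precisely the content that needs proving — you offer no mechanism that forces this.

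What the paper does instead, and what your proposal lacks, is a bordism-theoretic construction: it introduces the homomorphism $\varphi\colon\wt{\Omega}^{Spin^{c}}_{10}(K(\Z,4))\to\Z/2$, $[N,f]\mapsto\langle f^{\ast}(i_{4})\cdot\Sq^{2}f^{\ast}(i_{4}),[N]\rangle$, and fits it into the exact ladder of bordism groups induced by the cofibration diagram \eqref{eq:cof}. The decisive step is the vanishing $\varphi\circ\bar{\beta}_{\ast}\circ\partial=0$, proved by realising the Lemma~\ref{lem:sq5} identity on the bounding manifold $W$ and observing that the term $w_{2}(W)\cdot f^{\ast}\rho_{2}(t_{Y}) = \rho_{2}(c\cdot f^{\ast}t_{Y})$ is the mod-$2$ reduction of a torsion element in $H^{r+7}(W,\partial W)\cong\Z$, hence zero — this is where the spin$^{c}$ hypothesis and Lemma~\ref{lem:tor} enter. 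Granted that vanishing, $\varphi\circ\bar{\beta}_{\ast}$ factors through the image of $\bar{\psi}_{\ast}$ in $H_{6}(BSpin^{c})$, and because that image sits inside the $2$-torsion subgroup (which is a $\Z/2$-vector-space direct summand), the resulting homomorphism extends to all of $H_{6}(BSpin^{c})$, giving $\theta\in\mathrm{Hom}(H_{6}(BSpin^{c}),\Z/2)\subset H^{6}(BSpin^{c};\Z/2)$. There is no claim, and in fact no need, that $\theta$ is a polynomial in Stiefel--Whitney classes; it is only later (Lemmas~\ref{lem:thetaz} and \ref{lem:theta0}) that $[\theta]=[w_{6}]$ modulo $\rho_{2}(H^{6}(BSpin^{c}))$ is established. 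So while you have identified the right tools, you have not found the bordism-level factorisation that actually manufactures $\theta$.
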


\begin{proof}
We note that for large $r$ and $s$, we have 
\begin{align*}
\wt{\Omega}_{r+i}^{Spin^{c}}(K(\Z, r)) &= \pi_{2s+r+i}(MSpin^{c}(2s) \wedge K(\Z, r)) = \wt{H}_{2s+i}(MSpin^{c}(2s)) \\
&= H_{i}(BSpin^{c}(2s)) = H_{i}(BSpin^{c}).
\end{align*}
In addition, we have suspension isomorphisms
\begin{align*}
 \wt{\Omega}_{r+6}^{Spin^{c}}(\Sigma^{r-4}K(\Q/\Z, 3)) &\cong \wt{\Omega}^{Spin^{c}}_{10}(K(\Q/\Z,3)),\\
  \wt{\Omega}_{r+6}^{Spin^{c}}(\Sigma^{r-4}K(\Z, 4)) &\cong \wt{\Omega}^{Spin^{c}}_{10}(K(\Z, 4)).
\end{align*}
Then from the commutative diagram \eqref{eq:cof} of the cofibrations \eqref{eq:cofxr} and \eqref{eq:cofyr}, we have the following exact ladder,
\begin{equation*}
\begin{split}
\xymatrix{
 & \wt{\Omega}_{r+7}^{Spin^{c}}(Y_{r}) \ar[r]^-{\partial} \ar[d]_{\phi_{\ast}}  & \wt{\Omega}_{10}^{Spin^{c}}(K(\Q/\Z, 3)) \ar[r]^-{\bar{\psi}_{\ast}} \ar[d]_{\bar{\beta}_{\ast}} & H_{6}(BSpin^{c}) \ar@{=}[d]\\
& \wt{\Omega}_{r+7}^{Spin^{c}}(X_{r}) \ar[r]^-{\partial}  & \wt{\Omega}_{10}^{Spin^{c}}(K(\Z, 4)) \ar[r]^-{\psi_{\ast}}  & H_{6}(BSpin^{c}),
}
\end{split}
\end{equation*}
where $\psi_{\ast}$ 
is given by 
\begin{equation}\label{eq:psi}
\psi_{\ast}([N,f])=\tau_{\ast}([N]\cap f^{\ast} (i_{4})),
\end{equation}
for any bordism class $[N, f]\in  \wt{\Omega}^{Spin^{c}}_{10}(K(\Z, 4))$.

Denote by $\varphi \colon \wt{\Omega}^{Spin^{c}}_{10}(K(\Z,4)) \rightarrow \Z/2$ 
the homomorphism 
\begin{equation*}
\varphi([N, f])=\langle f^{\ast}(i_{4})\cdot \Sq^{2} ( f^{\ast}(i_{4}) ), ~[N] \rangle.
\end{equation*}
We claim that 
\begin{equation}\label{eq:varphi0}
\varphi \circ \bar{\beta}_{\ast} \circ \partial = 0
\end{equation}
and give the proof later.
Assuming the claim, 
since $\wt{\Omega}_{10}^{Spin^{c}}(K(\Q/\Z, 3))$ is a torsion group and all non-trivial 
torsion in $H_{6}(BSpin^{c})$ has order $2$, it follows that 
there is a homomorphism $\theta \colon H_{6}(BSpin^{c}) \rightarrow \Z/2$, or equivalently a class 
$$\theta \in \mathrm{Hom}(H_{6}(BSpin^{c}), \Z/2) \subset H^{6}(BSpin^{c}; \Z/2)$$
such that 
\begin{equation}\label{eq:theta}
\theta \circ \bar{\psi}_{\ast} = \theta \circ \psi_{\ast} \circ \bar{\beta}_{\ast} = \varphi \circ \bar{\beta}_{\ast}.
\end{equation}


Now, for any $10$-dimensional manifold $N$ and any torsion class $t \in H^{4}(N)$, there must exist an element $z \in H^{3}(N; \Q/\Z)$ such that 
$\bocqz (z) = t$.
Therefore, $$f_{t} = \bar{\beta} \circ f_{z}$$
by Equation \eqref{eq:betaf}.
Hence, on the one hand we have $ [N, f_{t}] =  \bar{\beta}_{\ast} ([N, f_{z}]) $ and 
$$\theta \circ \psi_{\ast} ([N, f_{t}]) =  \theta \circ \psi_{\ast} \circ \bar{\beta}_{\ast} ([N, f_{z}]) = \varphi \circ \bar{\beta}_{\ast} ([N, f_{z}]) = \varphi ([N, f_{t}]) = \langle \rho_{2} (t)  \cdot \Sq^{2} ( \rho_{2} (t) ), [N] \rangle$$
by Equation \eqref{eq:theta}. On the other hand we have
$$\theta \circ \psi_{\ast} ([N, f_{t}]) = \theta ( \tau_{\ast} ([N] \cap t) ) = \langle \tau^{\ast}(\theta), [N]\cap \rho_{2} (t) \rangle = \langle \tau^{\ast}\theta \cdot \rho_{2} (t), [N] \rangle$$
by the definition of $\theta$ and Equation \eqref{eq:psi}. 
This completes the proof  modulo that Equation \eqref{eq:varphi0} holds.

Now we verify Equation \eqref{eq:varphi0}.
Let
$$[(W,\partial W), (f, g)] \in \wt{\Omega}_{r+7}^{Spin^{c}}(Y_{r}) \cong \Omega_{r+7}^{Spin^{c}}(K(\Z, r), \Sigma^{r-4}K(\Q/\Z, 3))$$
be a bordism class so that $f,~g$ fit into the commutative diagram
\begin{equation*}
\begin{split}
\xymatrix{
\partial W \ar[r]^-{g} \ar@{_{(}->}[d]^{} & \Sigma^{r-4}K(\Q/\Z, 3) \ar[d]^{\bar{\psi}} \\
W \ar[r]^-{f} & K(\Z, r).
}
\end{split}
\end{equation*}
Recalling the commutative diagram \eqref{eq:cof}, it follows from the definition of $\varphi$ and Lemmas \ref{lem:sq5} and \ref{lem:tor} that
\begin{align*}
\varphi \circ \bar{\beta}_{\ast} \circ \partial ([(W,\partial W), (f, g)]) & = \langle g^{\ast} \circ (\Sigma^{r-4}\bar{\beta})^{\ast} \circ \sigma^{r-4} (i_{4} \cdot \Sq^{2}( i_{4} ) ), [\partial W] \rangle \\
&= \langle \delta \circ g^{\ast}  \circ (\Sigma^{r-4}\bar{\beta})^{\ast}  \circ \sigma^{r-4}  ( i_{4} \cdot \Sq^{2} ( i_{4} ) ), [W, \partial W] \rangle \\
&= \langle f^{\ast} \circ \phi^{\ast} \circ \delta \circ \sigma^{r-4} ( i_{4} \cdot \Sq^{2} ( i_{4} )), [W,\partial W] \rangle \\
&= \langle f^{\ast} \circ \phi^{\ast} \circ \Sq^{2} \left( \overline{ \Sq^{5} ( i_{r} ) } \right), [W,\partial W] \rangle \\
&= \langle f^{\ast} \circ \Sq^{2} ( \rho_{2} (t_{Y}) ) , [W, \partial W] \rangle \\
&= \langle  w_{2}(W) \cdot  f^{\ast} (\rho_{2} (t_{Y}) ), [W, \partial W] \rangle.
\end{align*}
Since $W$ is spin$^{c}$, there exists an element $c \in H^{2}(M)$ such that $\rho_{2} (c) = w_{2}(W)$. Recall that $t_{Y}$ is a torsion element. Then $c \cdot f^{\ast} (t_{Y})$ is a torsion element in $H^{r+7}(M, \partial W) \cong \Z$ and hence it is zero. Therefore
\begin{align*}
\varphi \circ \bar{\beta}_{\ast} \circ \partial ([(W,\partial W), (f, g)]) & = \langle  w_{2}(W)\cdot f^{\ast} ( \rho_{2} (t_{Y}) ), [W, \partial W] \rangle \\
&=\langle \rho_{2}(c \cdot f^{\ast} (t_{Y}) ), [W, \partial W] \rangle \\
&=0
\end{align*}
and this completes the proof of the claim.
\end{proof}

\begin{lemma}\label{lem:thetaz}
A class $\theta$ as in Lemma \ref{lem:theta} is well defined in 
\begin{equation*}
H^{6}(BSpin^{c};\Z/2)/\rho_{2} (H^{6}(BSpin^{c})).
\end{equation*}
\end{lemma}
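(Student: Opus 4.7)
The plan is to show that the difference $\delta := \theta - \theta'$ of any two valid choices for the class of Lemma \ref{lem:theta} lies in $\rho_2(H^6(BSpin^{c}))$. By the construction, both $\theta$ and $\theta'$ arise as homomorphisms $H_6(BSpin^{c}) \to \Z/2$ via the natural inclusion $\mathrm{Hom}(H_6(BSpin^{c}), \Z/2) \subset H^6(BSpin^{c};\Z/2)$ coming from the universal coefficient theorem, and both satisfy $\theta \circ \bar{\psi}_{*} = \theta' \circ \bar{\psi}_{*} = \varphi \circ \bar{\beta}_{*}$. Hence $\delta$, viewed as a homomorphism, vanishes on $\im(\bar{\psi}_{*}) \subseteq H_6(BSpin^{c})$.

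To show $\delta \in \rho_2(H^6(BSpin^{c}))$, I would first dispose of the $\mathrm{Ext}^{1}(H_5(BSpin^{c}), \Z/2)$ summand of $H^6(BSpin^{c}; \Z/2)$: the long exact $\mathrm{Ext}$-sequence associated to $0 \to \Z \xrightarrow{\times 2} \Z \to \Z/2 \to 0$ terminates at $\mathrm{Ext}^{2}(H_5(BSpin^{c}), \Z) = 0$ since $\Z$ is a PID, so the reduction $\rho_2 \colon \mathrm{Ext}^{1}(H_5, \Z) \to \mathrm{Ext}^{1}(H_5, \Z/2)$ is surjective, and any difference in $\mathrm{Ext}$-parts is automatically in $\rho_2(H^6(BSpin^{c}))$. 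For the $\mathrm{Hom}$ part, the image of $\rho_2 \colon \mathrm{Hom}(H_6, \Z) \to \mathrm{Hom}(H_6, \Z/2)$ is exactly the subgroup of homomorphisms vanishing on the torsion subgroup $T \subseteq H_6(BSpin^{c})$. Since $\mathrm{Hom}(\Z/m, \Z/2) = 0$ for odd $m$, the map $\delta$ automatically vanishes on odd torsion, and under the hypothesis already used in Lemma \ref{lem:theta} that all non-trivial torsion in $H_6(BSpin^{c})$ has order $2$, the subgroup $T$ is $2$-torsion. It therefore remains to show $T \subseteq \im(\bar{\psi}_{*})$.

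By exactness of the spin$^{c}$-bordism sequence
\[
\wt{\Omega}_{10}^{Spin^{c}}(K(\Q/\Z, 3)) \xrightarrow{\bar{\psi}_{*}} H_6(BSpin^{c}) \xrightarrow{\bar{h}_{*}} \wt{\Omega}_{r+6}^{Spin^{c}}(Y_r),
\]
the inclusion $T \subseteq \im(\bar{\psi}_{*})$ is equivalent to $\bar{h}_{*}(T) = 0$. This last verification is the main obstacle. I would approach it via the long exact sequence in spin$^{c}$-bordism associated to the cofibration \eqref{eq:cofyr}, using the low-degree spin$^{c}$-bordism coefficients together with the cohomological information on $Y_r$ extracted in Lemmas \ref{lem:sq5} and \ref{lem:tor}; alternatively, one could attempt to realise every $2$-torsion class in $H_6(BSpin^{c})$ directly as $\bar{\psi}_{*}$ of an explicit spin$^{c}$ bordism class coming from $K(\Q/\Z, 3)$, thereby exhibiting the required containment on the nose.
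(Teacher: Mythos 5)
Your plan aims to prove uniqueness of the coset $[\theta]$ (that any two admissible choices $\theta,\theta'$ differ by an element of $\rho_2(H^6(BSpin^c))$), and to do so you unwind the universal coefficient sequence, isolate the $\mathrm{Ext}$-part, reduce the $\mathrm{Hom}$-part to the inclusion $T \subseteq \im(\bar\psi_*)$, and then explicitly flag this last inclusion as unresolved. That is a genuine gap, and it is the heart of the matter: nothing in the proposal establishes $\bar h_*(T)=0$, and it is not obvious a priori that every $2$-torsion class in $H_6(BSpin^c)$ is hit by $\bar\psi_*$. As written the argument does not close.

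The paper's proof avoids this entirely and is much shorter. Instead of showing that any two admissible classes agree modulo $\rho_2(H^6(BSpin^c))$, it shows the converse-flavoured statement: if $\theta$ satisfies the defining property of Lemma~\ref{lem:theta} and $x \in H^6(BSpin^c)$ is any integral class, then $\theta + \rho_2(x)$ also satisfies it. The key observation is elementary: for $t \in TH^4(N)$ a torsion class and $N$ a closed oriented $10$-manifold, the product $\tau^*(x)\cdot t$ is a torsion element of $H^{10}(N) \cong \Z$, hence zero; therefore $\rho_2(\tau^*(x))\cdot\rho_2(t) = \rho_2(\tau^*(x)\cdot t) = 0$ and the pairing $\langle \tau^*(\theta+\rho_2(x))\cdot\rho_2(t),[N]\rangle$ is unchanged. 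No analysis of $\im(\bar\psi_*)$, no splitting of the universal coefficient sequence, and no computation with the bordism exact sequence are needed. (The actual uniqueness of the coset, if one wants it, then drops out for free from Lemma~\ref{lem:theta0} and the fact that the quotient is $\Z/2$, since both cosets must equal the unique nonzero class.) I would suggest replacing the universal-coefficient and bordism machinery with the one-line torsion-in-$H^{10}$ argument; your route might be salvageable with a genuine computation of $\im(\bar\psi_*)$, but that effort is not warranted given the short direct proof available.
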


\begin{proof}
For any $x\in H^{6}(BSpin^{c})$ and any torsion element $t \in H^{4}(N)$, $\tau^{\ast} (x) \cdot t$ is a torsion class in $H^{10}(N) \cong \Z$. Therefore $\tau^{\ast} (x) \cdot t = 0$ and so
\begin{align*}
\tau^{\ast}(\theta+\rho_{2} (x) ) \cdot \rho_{2} (t) &= \tau^{\ast}(\theta) \cdot \rho_{2} (t) + \rho_{2} (\tau^{\ast} (x) ) \cdot \rho_{2} (t) \\
&=  \rho_{2} (t) \cdot \Sq^{2} ( \rho_{2} (t) ) + \rho_{2}(\tau^{\ast} (x) \cdot t)\\
&= \rho_{2} (t) \cdot \Sq^{2} ( \rho_{2} (t) ).
\end{align*}
Thus, the class $\theta+\rho_{2} (x)$ has the same property as $\theta$.
\end{proof}

Now classical computations show that $H^{6}(BSpin^{c};\Z/2)/\rho_{2} ( H^{6}(BSpin^{c}) ) \cong \Z/2$,
generated by $[w_6]$ and we have

\begin{lemma}\label{lem:theta0}
For a class $\theta$ as in Lemma \ref{lem:theta}, 
$[\theta] \in H^{6}(BSpin^{c};\Z/2)/\rho_{2} ( H^{6}(BSpin^{c}) )$ is a generator.
\end{lemma}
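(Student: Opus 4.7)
The plan is to reduce to the spin case via the natural map $f\colon BSpin\to BSpin^{c}$ induced by $Spin\subset Spin^{c}$, and then appeal to the classical Landweber--Stong identity. The same classical computation cited just before the lemma gives $H^{6}(BSpin;\Z/2)/\rho_{2}(H^{6}(BSpin))\cong\Z/2$, generated by $[w_{6}]$, with non-triviality of $[w_{6}]$ following from $\Sq^{1}(w_{6})=w_{7}\neq 0$ on $BSpin$ (witnessed by the Diaconescu--Moore--Witten spin $10$-manifold with $W_{7}\neq 0$ mentioned in Remark~1.5). Since $f^{\ast}(w_{6})=w_{6}$, the induced homomorphism
\[
\bar{f}^{\ast}\colon H^{6}(BSpin^{c};\Z/2)/\rho_{2}(H^{6}(BSpin^{c}))\longrightarrow H^{6}(BSpin;\Z/2)/\rho_{2}(H^{6}(BSpin))
\]
sends $[w_{6}]\mapsto[w_{6}]$ and is therefore an isomorphism of groups of order two; it suffices to show $[f^{\ast}(\theta)]\neq[0]$ in the spin quotient.

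To identify $[f^{\ast}(\theta)]$, I would take any closed spin $10$-manifold $N$ (which is spin$^{c}$ via $f$) and any torsion class $t\in H^{4}(N;\Z)$. On the one hand, Lemma~\ref{lem:theta} gives
\[
\langle\rho_{2}(t)\cdot\Sq^{2}(\rho_{2}(t)),[N]\rangle=\langle\tau^{\ast}(f^{\ast}(\theta))\cdot\rho_{2}(t),[N]\rangle,
\]
where $\tau\colon N\to BSpin$ classifies the spin structure on $N$. On the other hand, Landweber and Stong's Proposition~1.1 in~\cite{ls87} applied to the spin $10$-manifold $N$ yields the same left-hand side equal to $\langle\rho_{2}(t)\cdot\Sq^{2}(v_{4}(N)),[N]\rangle=\langle\rho_{2}(t)\cdot w_{6}(N),[N]\rangle$, using $v_{4}=w_{4}$ and $\Sq^{2}(w_{4})=w_{2}w_{4}+w_{6}=w_{6}$ for spin $N$. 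Combining these, the class $\tau^{\ast}(f^{\ast}(\theta))-w_{6}(N)$ pairs trivially with $\rho_{2}(t)\cap[N]$ for every spin $10$-manifold $N$ and every torsion $t\in H^{4}(N;\Z)$. Repeating the bordism-theoretic argument of Lemmas~\ref{lem:theta} and \ref{lem:thetaz} verbatim with $BSpin$ in place of $BSpin^{c}$ then shows that this condition uniquely identifies $[f^{\ast}(\theta)]=[w_{6}]$ in $H^{6}(BSpin;\Z/2)/\rho_{2}(H^{6}(BSpin))$; since $[w_{6}]\neq[0]$ and $\bar{f}^{\ast}$ is an isomorphism, $[\theta]$ is the required generator of $H^{6}(BSpin^{c};\Z/2)/\rho_{2}(H^{6}(BSpin^{c}))$.

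The main obstacle is the final uniqueness step: justifying that the vanishing of $\langle(\tau^{\ast}(f^{\ast}(\theta))-w_{6}(N))\cdot\rho_{2}(t),[N]\rangle$ on all spin $10$-manifolds $N$ with torsion $t\in H^{4}(N;\Z)$ really forces $f^{\ast}(\theta)-w_{6}\in\rho_{2}(H^{6}(BSpin))$. This follows by running the same bordism-theoretic construction as in Lemmas~\ref{lem:theta}--\ref{lem:thetaz} with $BSpin$ in place of $BSpin^{c}$, since the cofibrations \eqref{eq:cofxr}--\eqref{eq:cofyr} and the associated cohomological calculations do not depend on the distinction between spin and spin$^{c}$ structures at the degrees involved. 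An alternative, more direct route would be to exhibit a single closed spin$^{c}$ $10$-manifold $N$ with a torsion class $t\in H^{4}(N;\Z)$ for which $\langle\rho_{2}(t)\cdot\Sq^{2}(\rho_{2}(t)),[N]\rangle\neq 0$; by the argument of Lemma~\ref{lem:thetaz}, any such example forces $[\theta]\neq[0]$ immediately.
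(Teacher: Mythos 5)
Your ``alternative, more direct route'' is precisely the paper's proof, but you stop just short of supplying the required example. The paper supplies it as follows. Take the closed spin $10$-manifold $N$ with $W_7(N) \neq 0$ from Diaconescu--Moore--Witten (this is the same example invoked in Remark~1.5). By Lemma~\ref{lem:W7}(c) applied with $n = 10$, there exists a torsion class $t \in TH^4(N)$ with $\langle \rho_2(t) \cdot w_6(N), [N] \rangle \neq 0$. Since $N$ is spin, the Wu formula gives $w_6(N) = \Sq^2(v_4(N))$, and Landweber--Stong's Proposition~1.1 then yields
\[
\langle \rho_2(t) \cdot \Sq^2(\rho_2(t)), [N] \rangle = \langle \rho_2(t) \cdot \Sq^2(v_4(N)), [N] \rangle = \langle \rho_2(t) \cdot w_6(N), [N] \rangle \neq 0.
\]
As $N$ is spin$^c$ and $t$ is torsion, the computation inside the proof of Lemma~\ref{lem:thetaz} shows that if $\theta = \rho_2(x)$ for some $x \in H^6(BSpin^c)$, then $\langle \tau^*(\theta) \cdot \rho_2(t), [N] \rangle = \langle \rho_2(\tau^*(x) \cdot t), [N]\rangle = 0$, contradicting Lemma~\ref{lem:theta}. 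Hence $[\theta] \neq [0]$, and since the quotient is $\Z/2$ by the classical computation cited just before the lemma, $[\theta]$ is a generator.

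Your main route, by contrast, is considerably more elaborate and rests on a uniqueness claim that is not established in the paper and would require genuine additional work. Lemma~\ref{lem:thetaz} proves only that the identity of Lemma~\ref{lem:theta} is preserved under adding $\rho_2(x)$; it does not prove that the coset $[\theta]$ is \emph{uniquely characterised} by that identity. To conclude $[f^*(\theta)] = [w_6]$ from the fact that both $f^*(\theta)$ and $w_6$ satisfy the bilinear identity on every spin $10$-manifold with every torsion $t \in H^4$, you would need a converse: that any class in $H^6(BSpin;\Z/2)$ pairing to zero against $\rho_2(t) \cap [N]$ for all such $(N, t)$ must lie in $\rho_2(H^6(BSpin))$. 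Proving this would mean re-running the $\bar\psi_*$ and $\psi_*$ bordism analysis for $\Omega^{Spin}_*$ and checking surjectivity onto the appropriate subgroup of $H_6(BSpin)$ --- a step of comparable difficulty to the whole of Lemma~\ref{lem:theta}. The paper's argument avoids this entirely: since the target is $\Z/2$, it suffices to show $[\theta] \neq [0]$, which needs no identification of $[\theta]$ with any specific class, only a single non-vanishing pairing.
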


\begin{proof}
It follows from Diaconescu, Moore and Witten \cite[Appendix D]{dmw02} that there exists a $10$-dimensional spin manifold $N$ such that $W_{7}(N) \neq 0$. Then Lemma \ref{lem:W7} implies that there must exist a torsion class  $t \in H^{4}(N)$ such that 
$\rho_{2} (t) \cdot w_{6}(N) = \rho_{2} (t) \cdot \Sq^{2} ( v_{4}(N) ) \neq 0$. 
Hence $\rho_{2} (t) \cdot \Sq^{2} ( \rho_{2} (t) ) = \rho_{2}(t) \cdot \Sq^{2} ( v_{4}(N) )\neq 0$ by Landweber and Stong \cite[Proposition 1.1]{ls87}. This completes the proof.
\end{proof}

\begin{proof}[Proof of Theorem \ref{thm:bili}]
Since $H^{6}(BSpin^{c};\Z/2)/\rho_{2} (H^{6}(BSpin^{c}) ) \cong \Z/2$ with generator 
$[w_{6}]$, Theorem \ref{thm:bili} follows easily from Lemmas \ref{lem:theta}, \ref{lem:thetaz} and \ref{lem:theta0}.
\end{proof}


\section{The existence of stable complex structures on $M^\circ$} \label{s:Mcirc}
Throughout this section, $M$ will be an orientable $9$-manifold. 
Denote by $\mdot := M - \mathrm{int}(D^9)$ the space obtained from $M$ by removing the interior of a small
embedded $9$-disc.
Applying Morse theory, we equip $M$ with a $CW$-structure in which the $8$-skeleton of $M$
is a deformation retract of $M^\circ$.
In Section \ref{s:W7} we showed that if $W_3(M) = 0$ then $M$ admits a
stable complex structure of its $7$-skeleton.
In this section, we describe $\mathfrak{o}_8(M)$, 
the obstruction for $M$ to admit a stable complex structure over $\mdot$ 
in general in terms of the secondary cohomology operation $\Omega$ from the Introduction.
We also describe $\mathfrak{o}_8(M)$ terms of the characteristic classes and the cohomology ring of 
$M$ in some special cases, such as when $M$ is spin.

\subsection{Preliminaries} \label{ss:prelim}
Recall that $TM$ denotes the tangent bundle of $M$.
In this subsection we establish some preliminary results about
stable complex structures on $TM|_{M^{(7)}}$, complex vector bundles over $\mdot$, 
the Stiefel-Whitney classes of $M$, the action of Steenrod squares on the mod~$2$ cohomology groups of $M$ and the secondary cohomology operation $\Omega$.


Firstly, we consider stable complex structures on $TM|_{M^{(7)}}$ and complex vector bundles 
over $\mdot$.  From Lemma \ref{lem:obs} and Theorem \ref{thm:W7} we immediately obtain

\begin{lemma}\label{lem:M7}
A $9$-dimensional manifold $M$ admits a stable complex structure over $M^{(7)}$ if and only if $M$ is spin$^{c}$-able.  \hfill \qed
\end{lemma}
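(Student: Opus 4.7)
The plan is to recognise this as an immediate application of the obstruction theory already set up in Section~\ref{ss:outline}, together with Lemma~\ref{lem:obs} and Theorem~\ref{thm:W7}. The obstructions $\mathfrak{o}_q(TM)$ for $q \le 7$ live in $H^q(M; \pi_{q-1}(SO/U))$, and the only ones that do not vanish for purely homotopical reasons are $\mathfrak{o}_3(TM) \in H^3(M)$ and $\mathfrak{o}_7(TM) \in H^7(M)$, since $\pi_{q-1}(SO/U) = 0$ for $q \in \{4,5,6\}$ (this is exactly the assertion recalled in Section~\ref{ss:outline}). Hence the whole problem reduces to checking vanishing of these two integral Stiefel--Whitney classes.

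For the forward implication I would argue: if a stable complex structure $h$ exists over $M^{(7)}$, then its restriction to $M^{(3)}$ represents an element of $\mathfrak{o}_3(TM)$ equal to zero. By Lemma~\ref{lem:obs}(a), $\mathfrak{o}_3(TM) = W_3(M)$, so $W_3(M) = 0$ and $M$ is spin$^c$-able. This direction needs no input beyond Lemma~\ref{lem:obs}(a).

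For the reverse implication, assume $M$ is spin$^c$-able, i.e.\ $W_3(M) = 0$. Then Lemma~\ref{lem:obs}(a) gives $\mathfrak{o}_3(TM) = 0$, so there exists a stable complex structure over $M^{(3)}$. Using the vanishing of the relevant homotopy groups of $SO/U$, this structure extends automatically over $M^{(6)}$. At this point I would invoke Theorem~\ref{thm:W7} to obtain $W_7(M) = 0$, and then Lemma~\ref{lem:obs}(b) (combined with the convention that $\mathfrak{o}_7(TM)$ is a singleton written $W_7(M)$) gives $\mathfrak{o}_7(TM) = 0$, so any such extension further extends over $M^{(7)}$, completing the argument.

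There is no real obstacle here, as all the substantive work has been done in Section~\ref{s:W7}; the role of this lemma is simply to package the outcome as a clean statement about $M^{(7)}$ before proceeding to analyse $\mathfrak{o}_8(M)$ in the remainder of Section~\ref{s:Mcirc}.
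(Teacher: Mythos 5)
Your proposal is correct and follows exactly the paper's argument: the paper simply observes that Lemma~\ref{lem:obs} (Massey--Thomas, giving $\mathfrak{o}_3 = W_3$ and $\mathfrak{o}_7 = W_7$) together with Theorem~\ref{thm:W7} (that $W_3(M)=0$ forces $W_7(M)=0$ in dimension $9$) immediately yields the claim, with the intermediate obstructions $\mathfrak{o}_q$ for $q=4,5,6$ vanishing since $\pi_{q-1}(SO/U)=0$. Your write-up merely makes explicit the skeleton-by-skeleton extension that the paper compresses into ``we immediately obtain.''
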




\begin{lemma}
\label{lem:acsc1}
Let $M$ be a spin$^c$-able $9$-manifold. 
For any class $c \in H^{2}(M)$ with $\rho_2 ( c ) = w_2 (M)$, there exists a stable complex structure $\eta$ of $TM|_{ M^{(7)} }$ such that $c_1(\eta) = c$.
\end{lemma}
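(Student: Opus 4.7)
The plan is to start from any stable complex structure $\eta_0$ on $TM|_{M^{(7)}}$, which exists by Lemma \ref{lem:M7} since $M$ is spin$^{c}$-able. Its first Chern class $c_0 := c_1(\eta_0) \in H^2(M)$ is automatically an integral lift of $w_2(M)$, so for the given target $c$ we can write $c - c_0 = 2d$ for some $d \in H^2(M)$. The idea is then to modify $\eta_0$ to achieve $c_1 = c$ by first adjusting it on $M^{(2)}$ and then re-extending back up to $M^{(7)}$.

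For the adjustment on the $2$-skeleton I would use the standard classification of stable complex structures over a $2$-complex. Homotopy classes of stable complex structures on $TM|_{M^{(2)}}$ form a torsor over $[M^{(2)}, SO/U] \cong H^2(M^{(2)}; \Z)$, the second identification coming from the fact that $SO/U$ is $1$-connected with $\pi_2(SO/U) = \Z$ and $M^{(2)}$ is $2$-dimensional. The long exact sequence of the fibration $SO/U \to BU \to BSO$ shows that $\pi_2(SO/U) \to \pi_2(BU) = \Z$ is multiplication by $2$, which translates into the statement that the torsor action shifts $c_1$ by twice the corresponding cohomology class. Acting on $\eta_0|_{M^{(2)}}$ by $d|_{M^{(2)}}$ therefore yields a stable complex structure $\eta'$ on $TM|_{M^{(2)}}$ with $c_1(\eta') = c|_{M^{(2)}}$.

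For the re-extension I would climb from $M^{(2)}$ up to $M^{(7)}$ by obstruction theory. The possible obstructions lie in $H^q(M; \pi_{q-1}(SO/U))$ for $q = 3, \dots, 7$, and the Bott-periodic vanishing $\pi_q(SO/U) = 0$ for $q \in \{3,4,5\}$ leaves only degrees $3$ and $7$. By Lemma \ref{lem:obs}(a) the degree-$3$ obstruction equals $W_3(M)$, which vanishes by hypothesis, and by Lemma \ref{lem:obs}(b) together with Theorem \ref{thm:W7} the degree-$7$ obstruction equals $W_7(M) = 0$. Hence $\eta'$ extends to a stable complex structure $\eta$ on $TM|_{M^{(7)}}$. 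Since the cellular restriction $H^2(M^{(7)}) \to H^2(M^{(2)})$ is injective and $c_1(\eta)|_{M^{(2)}} = c|_{M^{(2)}}$, we conclude $c_1(\eta) = c$ under the identification $H^2(M) \cong H^2(M^{(7)})$.

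The only substantive input is the vanishing of the degree-$7$ obstruction, i.e.\ Theorem \ref{thm:W7}; the remaining ingredients, namely the classification of stable complex structures over a $2$-complex and the Bott-periodic computation of $\pi_*(SO/U)$ in low degrees, are standard. The role of this lemma is thus to convert the global vanishing $W_7(M) = 0$ into the realisation of every integral lift of $w_2(M)$ as the first Chern class of a stable complex structure on $TM|_{M^{(7)}}$.
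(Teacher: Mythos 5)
Your argument is correct, but it is considerably heavier than the paper's, which avoids obstruction theory entirely. The paper's proof is a one-line direct construction: starting from any stable complex structure $\eta'$ on $TM|_{M^{(7)}}$ (existence by Lemma~\ref{lem:M7}), write $c = c_1(\eta') + 2x$ for some $x \in H^2(M)$ (possible since both $c$ and $c_1(\eta')$ reduce to $w_2(M)$), and set $\eta := \eta' + l_x - l_{-x}$. Because $(l_x)_\R \cong (l_{-x})_\R$ as real bundles, adding $l_x - l_{-x}$ does not change the underlying stable real bundle, so $\eta$ is still a stable complex structure on $TM|_{M^{(7)}}$, and $c_1(\eta) = c_1(\eta') + x - (-x) = c$. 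Your route instead restricts $\eta_0$ to the $2$-skeleton, acts by the $[M^{(2)}, SO/U]$-torsor structure (which, as you correctly compute, shifts $c_1$ by $2d$), and then re-climbs to $M^{(7)}$ by re-running the obstruction computation $\pi_3(SO/U) = \pi_4(SO/U) = \pi_5(SO/U) = 0$ and $\mathfrak{o}_3 = W_3 = 0$, $\mathfrak{o}_7 = W_7 = 0$; the injectivity of $H^2(M^{(7)}) \to H^2(M^{(2)})$ then pins down $c_1(\eta) = c$. This is sound, but it redundantly re-invokes Theorem~\ref{thm:W7} (already used to get $\eta_0$ in the first place) and relies on the torsor/Bott-periodicity bookkeeping, none of which is needed once one notices that the $\Z$-torsor action you want over the $2$-skeleton is simply given globally by $\eta \mapsto \eta + l_x - l_{-x}$, which needs no re-extension at all. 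What your approach does buy is a cleaner conceptual picture of why the first Chern class is the only freedom over $M^{(7)}$; what the paper's approach buys is brevity and the avoidance of any skeleton-level surgery.
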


\begin{proof}
For any $x \in H^{2}(M)$, denote by $l_{x}$ the complex line bundle over $M$ with $c_{1}(l_{x}) = x$.
Suppose that $\eta^{\prime}$ is a stable complex structure of $TM|_{ M^{(7)} }$ by Lemma \ref{lem:M7}.
Then $\rho_2 ( c_1(\eta^{\prime}) ) = w_2(M)$ and
hence $c = c_1( \eta^{\prime} ) + 2x$ for some $x \in H^{2} ( M )$.
Therefore, it follows that $\eta = \eta^{\prime} + l_x - l_{ - x} $ is a stable complex structure of $TM|_{ M^{(7)} }$ with $c_1( \eta ) = c$.
\end{proof}

\begin{lemma}\label{lem:8ext9}
In the stable range, every complex vector bundle $\eta$ over $\mdot$ can be extended to 
a complex vector bundle over $M$. 
\end{lemma}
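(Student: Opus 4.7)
My plan is to write $M = M^\circ \cup_\phi D^9$, where $\phi : S^8 \to M^\circ$ is the inclusion of the boundary sphere (equivalently, the attaching map of the top $9$-cell once we retract $M^\circ$ onto $M^{(8)}$), and to show that $\phi^*\eta$ is trivial as a complex vector bundle on $S^8$; the bundle $\eta$ then extends across the disc to all of $M$. In the stable range, the isomorphism classes of complex vector bundles on $S^8$ form $\pi_8(BU)\cong\Z$, and this group embeds into $H^8(S^8;\Z)$ via the fourth Chern class (equivalently, via the top component of the Chern character).

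The first step is to observe that the composition $i\circ\phi\colon S^8 \xrightarrow{\phi} M^\circ \hookrightarrow M$ is null-homotopic, since $\phi$ identifies $S^8$ with $\partial\overline{D^9}$ and the closed disc $\overline{D^9}\subset M$ is contractible. Next, from the cofibre sequence $M^\circ \to M \to M/M^\circ \simeq S^9$ and the vanishing of $\tilde H^k(S^9)$ for $k\le 8$, the inclusion $i\colon M^\circ\hookrightarrow M$ induces an isomorphism $i^*\colon H^8(M)\xrightarrow{\cong}H^8(M^\circ)$. Hence $c_4(\eta) = i^*\tilde c$ for a unique $\tilde c\in H^8(M)$, and
\[
\phi^* c_4(\eta) \;=\; (i\circ\phi)^*\tilde c \;=\; 0.
\]

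Since $c_4(\phi^*\eta) = \phi^* c_4(\eta) = 0$, the class $[\phi^*\eta]\in\pi_8(BU)\cong\Z$ vanishes. In the stable range this forces $\phi^*\eta$ to be trivial as a rank-$r$ bundle on $S^8$, so $\eta$ extends across the $9$-cell. The only point requiring care, which I regard as routine, is to confirm that the stable range assumption is strong enough both for $\pi_8(BU(r))\to\pi_8(BU)$ to be an isomorphism and for $c_4$ to inject $\pi_8(BU)$ into $H^8(S^8;\Z)$; both facts are immediate from Bott periodicity, with the Chern character giving $\mathrm{ch}_4 = -c_4/6$ on $S^8$ so that the Bott generator has $c_4 = \pm 6$.
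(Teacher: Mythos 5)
Your proof is correct and follows essentially the same route as the paper's: decompose $M = M^\circ \cup_\phi D^9$, reduce the extension problem to showing $\phi^*\eta$ is trivial, detect this by $c_4$ via Bott periodicity, and conclude by observing $\phi^*$ kills $H^8$; you additionally fill in the justification for the last step, which the paper asserts without proof. One small correction: the vanishing of $\tilde H^k(S^9)$ for $k \le 8$ only gives injectivity of $i^*\colon H^8(M)\to H^8(M^\circ)$, whereas your argument needs surjectivity --- that instead comes from the next term in the long exact sequence, namely the fact that $\tilde H^{9}(S^9)\cong H^9(M,M^\circ)\to H^9(M)$ is injective (indeed an isomorphism, since $M$ is closed, connected and orientable).
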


\begin{proof}
By definition, 
$M$ is homeomorphic to $\mdot \cup_{f} D^{9}$ for some map $f\colon S^{8} \rightarrow \mdot$, which attaches the $9$-disc $D^{9}$. 
In order to prove that $\eta$ can be extended to $M$, it suffices to prove that $f^{\ast}(\eta)$ is stable trivial. 
Hence it only need to prove that the fourth Chern class $c_{4}(f^{\ast}(\eta)) = 0$.
Since $c_{4}(f^{\ast}(\eta)) = f^{\ast}(c_{4}(\eta))$ and $f^{\ast}\colon H^{8}(\mdot) \rightarrow H^{8}(S^{8})$ is trivial, we are done.
\end{proof}

Secondly, 
we consider 
the Stiefel-Whitney classes of an orientable $9$-manifold $M$ 
and the action of Steenrod squares on the mod~$2$ cohomology groups of $M$.
%
Denote by $v_{i}(M)\in H^{i}(M;\Z/2)$ the $i$-th Wu-class of $M$.
It is known that $v_{1}(M) = w_{1}(M) = 0$
and it follows from the definition of $v_{i}(M)$ and the Steenrod relation $\Sq^{3} = \Sq^{1}  \Sq^{2}$ 
that $v_{i}(M)=0$ for $i\neq2,~4$; see \cite[\S 11]{ms74b}.
Therefore, combining these facts with Wu's formula 
$w_{k}(M)=\sum_{i=0}^{k}\Sq^{i} ( v_{k-i}(M) )$ (cf. \cite[p. 132]{ms74b}),
we can deduce that
\begin{align}
v_{2}(M) & = w_{2}(M), \label{eq:v2}\\
v_{4}(M) & = w_{4}(M)+w_{2}^{2}(M), \label{eq:v4} \\
w_6(M) & = \Sq^2 ( v_4(M) ), \label{eq:w6} \\
w_8(M) & = w_4^2(M) + w_2^4(M), \label{eq:w8} \\
w_{2i+1}(M) & = \Sq^1 ( w_{2i}(M) ), \text{~ for ~} i = 1,~ 2, ~ 3, \label{eq:wodd}
\end{align}
and 
\begin{equation} \label{eq:w9}
w_9(M) = 0. 
\end{equation}

The above identities allow us to prove

\begin{proposition}\label{prop:sw}
If $M$ is a spin$^c$-able $9$-manifold, then
the Stiefel-Whitney classes of $M$ satisfy the following relations:
 $w_{k}(M)  =0$ for odd $k$,  $w_{6}(M)  = \Sq^{2} ( w_{4}(M) )$ and
$$w_{2}(M)w_{4}(M)   = w_{2}(M)w_{6}(M) = 0.$$
\end{proposition}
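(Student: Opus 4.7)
The plan is to verify each identity by combining the Wu formulas \eqref{eq:v2}--\eqref{eq:w9}, the already-known vanishings $w_1(M)=0$ (orientability), $w_3(M)=\rho_2 W_3(M)=0$ (spin$^c$), $w_7(M)=\rho_2 W_7(M)=0$ (Theorem \ref{thm:W7}) and $w_9(M)=0$, together with the Cartan formula and Poincaré duality on mod~$2$ cohomology. Each remaining identity then reduces to a short calculation in $\mathbb{F}_2$.

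First I would handle the odd classes. The only one not already seen to vanish is $w_5$; by \eqref{eq:wodd} we have $w_5 = \Sq^1(w_4)$, so for any $x\in H^4(M;\Z/2)$ the Cartan formula and orientability ($v_1=0$) give
\[\langle \Sq^1(w_4) \cdot x,[M]\rangle = \langle w_4 \cdot \Sq^1(x),[M]\rangle,\]
and substituting $w_4 = v_4 + w_2^2$ from \eqref{eq:v4} splits this into $\langle v_4 \Sq^1(x),[M]\rangle = \langle \Sq^4 \Sq^1(x),[M]\rangle = 0$ (dimensional vanishing in $H^{10}$) and $\langle w_2^2 \Sq^1(x),[M]\rangle = \langle \Sq^1(w_2^2 x),[M]\rangle = 0$, the first equality using $\Sq^1 w_2 = w_3 = 0$ and the second using orientability. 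Poincaré duality gives $w_5 = 0$. For the identity $w_6 = \Sq^2(w_4)$, I would simply expand $w_6 = \Sq^2(v_4) = \Sq^2(w_4) + \Sq^2(w_2^2)$ via \eqref{eq:w6} and observe that $\Sq^2(w_2^2) = (\Sq^1 w_2)^2 = w_3^2 = 0$ by the Cartan formula.

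The two product identities will both follow from the same pattern: pair with the Poincaré-dual degree, move Steenrod squares using Cartan, and use the Wu identity $\langle \Sq^i(z),[M]\rangle = \langle v_i z,[M]\rangle$ to produce two identical contributions that cancel in $\mathbb{F}_2$. For $w_2 w_4 \in H^6$ and $x \in H^3(M;\Z/2)$, I would write $w_2 w_4 = w_2 v_4 + w_2^3$ using \eqref{eq:v4}, then compute
\[\langle w_2 v_4 \cdot x,[M]\rangle = \langle v_4(w_2 x),[M]\rangle = \langle \Sq^4(w_2 x),[M]\rangle = \langle w_2^2 \cdot \Sq^2(x),[M]\rangle,\]
the last equality coming from Cartan together with $\Sq^1 w_2 = 0$ and the dimensional vanishings $\Sq^{i} x = 0$ for $i > 3$; a parallel computation using $v_2 = w_2$ shows $\langle w_2^3 x,[M]\rangle = \langle \Sq^2(w_2^2 x),[M]\rangle = \langle w_2^2 \cdot \Sq^2(x),[M]\rangle$, so the sum vanishes mod $2$. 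For $w_2 w_6 \in H^8$ and $y\in H^1(M;\Z/2)$, I would substitute $w_6 = \Sq^2(w_4)$ and apply Cartan to $\Sq^2(w_4 \cdot w_2 y)$; using $\Sq^1(w_4) = w_5 = 0$ (just established) and $\Sq^2(w_2 y) = w_2^2 y$, the two surviving terms $\langle w_2^2 w_4 y,[M]\rangle$ and $\langle w_4 w_2^2 y,[M]\rangle$ again cancel in $\mathbb{F}_2$, and Poincaré duality yields $w_2 w_6 = 0$.

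The main obstacle is bookkeeping: each identity uses essentially all of the available vanishings (orientability, $w_3=0$, $w_5=0$, the dimensional vanishing of high $\Sq^i$ on low-degree classes, and $v_i = 0$ for $i \ne 2,4$), and the proof succeeds because of a chain of $\mathbb{F}_2$-cancellations between Wu pairings and Cartan expansions rather than any single substitution.
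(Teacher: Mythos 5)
There is a genuine gap in your proof that $w_5(M)=0$, and this gap propagates to your argument for $w_2(M)w_6(M)=0$.

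You reduce $\langle\Sq^1(w_4)\cdot x,[M]\rangle$ to $\langle v_4\cdot\Sq^1(x),[M]\rangle=\langle\Sq^4(\Sq^1(x)),[M]\rangle$ and claim the latter vanishes for dimension reasons "in $H^{10}$". But $x\in H^4$ gives $\Sq^1(x)\in H^5$ and hence $\Sq^4\Sq^1(x)\in H^{9}$, the top dimension, where $H^9(M;\Z/2)\cong\Z/2$ is not zero. No dimensional vanishing applies, and in fact your chain of manipulations is circular: after discarding the $w_2^2$ term you have shown $\langle w_5 x,[M]\rangle=\langle\Sq^4\Sq^1(x),[M]\rangle=\langle v_4\cdot\Sq^1(x),[M]\rangle=\langle w_4\cdot\Sq^1(x),[M]\rangle$, which is exactly where you started. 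The identity $w_5=\Sq^1 w_4=0$ genuinely requires the spin$^{c}$ hypothesis and is not a consequence of Wu/Poincaré duality bookkeeping alone; the Dold manifold $D_{(5,2)}$ is an orientable $9$-manifold with $w_5=c^3 d\neq 0$. The actual reason is that for spin$^{c}$-able $M$ the class $w_4(M)$ admits an integral lift: the stable bundle $TM-(l_c)_{\R}$ is spin, its spin characteristic class $q_1$ satisfies $\rho_2(q_1)=w_4(TM-(l_c)_\R)=w_4(M)$, hence $W_5(M)=\boc(w_4(M))=0$ and $w_5(M)=\rho_2(W_5(M))=0$. This is the content behind the paper's terse assertion.

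Your remaining arguments are essentially correct, though they take a slightly longer route than the paper. For $w_6=\Sq^2(w_4)$ and $w_2 w_4=0$ your Cartan/Wu/Poincaré-duality cancellations are valid. For $w_2 w_6=0$ your computation explicitly uses $\Sq^1(w_4)=w_5=0$, so it inherits the gap above; once $w_5=0$ is supplied it goes through. The paper avoids this dependence entirely by using the explicit Wu formula $\Sq^2(w_4)=w_2 w_4 + w_6$ to get $w_2 w_4 = 0$ directly from $w_6=\Sq^2(w_4)$, and then $w_2 w_6 = w_2\Sq^2(w_4)=\Sq^2(w_2 w_4)+w_2^2 w_4 = 0$ by Cartan (using only $\Sq^1 w_2=0$). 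You may want to adopt that shorter route, but in any case you must replace the dimensional-vanishing step for $w_5$ with the integral-lift argument.
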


\begin{proof}
Note that $M$ being spin$^{c}$-able implies that 
$w_3(M) = \Sq^{1} ( w_{2}(M) ) = 0$ and $ w_5(M) = \Sq^{1} ( w_{4}(M) ) = 0$. 
Then combining these facts with Corollary \ref{coro:w7} and Equation \eqref{eq:w9}, we can deduced that $w_k(M) = 0$ for odd $k$.
Moreover, since $w_3(M) = \Sq^{1} ( w_{2}(M) ) = 0$, it follows from Equations \eqref{eq:v4}, \eqref{eq:w6} and the Cartan formula that
\begin{align*}
w_{6}(M) = \Sq^{2} ( v_{4}(M) ) =  \Sq^{2} ( w_{4}(M) ).
\end{align*}
Now, the fact $\Sq^{2} ( w_{4}(M) ) = w_{2}(M) w_{4}(M) + w_{6}(M)$ by 
Wu's explicit formula
(cf. \cite[Problem 8-A]{ms74b}) implies that $w_{2}(M) w_{4}(M)=0$ and 
\[
w_{2}(M)w_{6}(M) = w_{2}(M)\Sq^{2} ( w_{4}(M) ) = \Sq^{2} (w_{2}(M)w_{4}(M)) + w^{2}_{2}(M)w_{4}(M) = 0.
\]
This completes the proof.
\end{proof}

We next consider the action of the Steenrod squares on $H^*(M; \Z/2)$.
Recall
the set
\begin{equation*} 
\mathcal{D}_{M} := \{x\in H^{1}(M; \Z/2) ~|~ x \cdot w_{2}(M) \in \rho_{2}(TH^{3}(M))\},
\end{equation*}
where $TH^3(M)$ is the torsion subgroup of $H^3(M)$.

\begin{lemma}\label{lem:dm}
The set
$\mathcal{D}_{M}$ is the annihilator of $\Sq^{2} ( \rho_{2}(H^{6}(M)) )$ with respect to the cup product .
\end{lemma}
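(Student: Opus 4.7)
The plan is to characterise the annihilator of $\Sq^2(\rho_2(H^6(M)))$ in $H^1(M;\Z/2)$ via the perfect mod~$2$ Poincaré pairing on the closed orientable $9$-manifold $M$, and to reduce the condition to the definition of $\mathcal{D}_M$ by sliding $\Sq^2$ off the factor $\rho_2(y)$ using the Cartan formula together with Wu's formula. The key point is that both $\Sq^2$ and the perfect pairing can be re-expressed in terms of $w_2(M)=v_2(M)$, which brings $w_2(M)$ out to act on $x$ rather than on $\rho_2(y)$.

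For the central calculation, I would fix $x \in H^1(M;\Z/2)$ and $y \in H^6(M)$ and examine the top-degree class $x \cdot \Sq^2(\rho_2(y)) \in H^9(M;\Z/2)$. The Cartan formula gives
\[
\Sq^2\bigl(x \cdot \rho_2(y)\bigr) = \Sq^2(x) \cdot \rho_2(y) + \Sq^1(x) \cdot \Sq^1(\rho_2(y)) + x \cdot \Sq^2(\rho_2(y)).
\]
The first term vanishes because $\Sq^2$ kills degree~$1$ classes, and the second term vanishes because $\Sq^1 \circ \rho_2 = 0$ on integral classes (since $\Sq^1 = \rho_2 \circ \boc$ and $\boc \circ \rho_2 = 0$ by the Bockstein sequence~\eqref{eq:bseq}). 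Hence $x \cdot \Sq^2(\rho_2(y)) = \Sq^2(x \cdot \rho_2(y))$. Now $x \cdot \rho_2(y)$ lies in $H^7(M;\Z/2)$, so Wu's formula together with $v_2(M) = w_2(M)$ from \eqref{eq:v2} yields
\[
\langle x \cdot \Sq^2(\rho_2(y)),\,[M]\rangle = \langle w_2(M) \cdot x \cdot \rho_2(y),\,[M]\rangle = \langle \rho_2(y) \cdot (x \cdot w_2(M)),\,[M]\rangle.
\]

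To conclude, I would read off from this identity that $x \in H^1(M;\Z/2)$ annihilates $\Sq^2(\rho_2(H^6(M)))$ under cup product if and only if the class $x \cdot w_2(M) \in H^3(M;\Z/2)$ pairs to zero with every element of $\rho_2(H^6(M)) \subseteq H^6(M;\Z/2)$ under the perfect mod~$2$ Poincaré pairing. By Massey's observation recalled in the proof of Lemma~\ref{lem:W7}, the annihilator of $\rho_2(H^6(M))$ under this pairing is exactly $\rho_2(TH^3(M))$. Thus the condition becomes $x \cdot w_2(M) \in \rho_2(TH^3(M))$, which is the defining condition of $\mathcal{D}_M$.

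The only delicate step is the Cartan-and-Wu manipulation, and even that collapses once one notices that both extraneous terms in the Cartan expansion vanish for dimension reasons and from $\Sq^1 \circ \rho_2 = 0$; the rest is a formal consequence of Poincaré duality and the cited annihilator identity.
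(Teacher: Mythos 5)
Your proof is correct and follows essentially the same route as the paper: both reduce $x \cdot \Sq^2(\rho_2(y))$ to $w_2(M) \cdot x \cdot \rho_2(y)$ via the Cartan formula and $v_2(M)=w_2(M)$, and then invoke Massey's identification of $\rho_2(TH^3(M))$ as the annihilator of $\rho_2(H^6(M))$. You simply spell out why the two extraneous Cartan terms vanish, a step the paper leaves implicit.
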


\begin{proof}
For any $x \in \mathcal{D}_{M}$ and $y \in H^{6}(M)$, it follows from the Cartan formula, the definition of Wu-classes and Equation \eqref{eq:v2} that
\begin{equation*}
x \cdot \Sq^{2} ( \rho_{2} (y) )= \Sq^{2}(x \cdot \rho_{2}(y) )= v_2(M) \cdot x \cdot y = w_{2}(M) \cdot x \cdot y. 
\end{equation*}
Since the annihilator of $H^{6}(M)$ is $TH^{3}(M)$ (cf.\ \cite[Lemma 1]{ma62}), 
the lemma can be deduced easily from the definition of $\mathcal{D}_{M}$.
\end{proof}

\begin{lemma}\label{lem:sq}
If $M$ is a orientable $9$-manifold, then
\begin{enumerate}
\item[(a)] for any $y \in H^{6}(M; \Z/2)$, $\Sq^{2} ( y ) = w_{2}(M) y,$
\item[(b)] for any $z \in H^{4}(M; \Z/2)$, $z^2 = v_4(M ) z = ( w_4(M)  + w_2^2(M) ) z,$
\item[(c)] if $\boc(x) = 0$ for any $x \in \mathcal{D}_{M}$, then
$\Sq^{1} ( H^{7}(M; \Z/2) ) \subseteq \Sq^{2} ( \rho_{2} (H^{6}(M)) ).$

\end{enumerate}
If, in addition, $M$ is spin$^c$-able then
\begin{enumerate}
\item[(d)] For any $u \in H^{2}(M)$, $w_{6}(M) \cdot \rho_{2}(u) \in \Sq^{2} ( \rho_{2} (H^{6}(M)) )$,
\item[(e)] if  $w_{4}(M)=0$, then $ \rho_{2}(v^{2}) \in \Sq^{2} ( \rho_{2} (H^{6}(M)) )$ for any $v \in H^{4}(M)$.
\end{enumerate}

\end{lemma}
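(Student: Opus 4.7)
My plan rests on two recurring tools: Poincaré duality, which identifies $H^{8}(M;\Z/2)$ with the dual of $H^{1}(M;\Z/2)$ via the cup-product pairing, and Wu's formula $\Sq^{k}(x) = v_{k} \cup x$ for $x \in H^{9-k}(M;\Z/2)$; for an orientable $9$-manifold the only non-zero Wu classes are $v_{2}(M) = w_{2}(M)$ and $v_{4}(M) = w_{4}(M) + w_{2}(M)^{2}$, by \eqref{eq:v2} and \eqref{eq:v4}. I will also repeatedly use that $\Sq^{1}$ is a derivation satisfying $\Sq^{1}\Sq^{1} = 0$ and $\Sq^{1} = \rho_{2}\boc$, so that $\Sq^{1}$ annihilates every mod~$2$ reduction of an integral class.

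For (a), I pair with an arbitrary $x \in H^{1}(M;\Z/2)$ and apply Poincaré duality. The Cartan formula gives $\Sq^{2}(y)\,x = \Sq^{2}(yx) - \Sq^{1}(y)\Sq^{1}(x) - y\,\Sq^{2}(x)$; here $\Sq^{2}(x) = 0$ since $x \in H^{1}$, while $\Sq^{2}(yx) = w_{2}(M)\,yx$ by Wu applied to $yx \in H^{7}$. To kill the cross term $\Sq^{1}(y)\Sq^{1}(x) \in H^{9}$, expand $\Sq^{1}(y \cdot \Sq^{1}(x)) = \Sq^{1}(y)\Sq^{1}(x) + y\,\Sq^{1}\Sq^{1}(x) = \Sq^{1}(y)\Sq^{1}(x)$; the left-hand side vanishes by Wu in top degree, since $v_{1} = 0$. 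Part (b) is analogous: pair with $x \in H^{1}$, use Wu on $zx \in H^{5}$ to obtain $\Sq^{4}(zx) = v_{4}(M)\,zx$, and handle the Cartan residual $\Sq^{3}(z)\Sq^{1}(x)$ by writing $\Sq^{3} = \Sq^{1}\Sq^{2}$ and repeating the above trick one level up.

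For (c), Poincaré duality combined with Lemma \ref{lem:dm} reduces the claim to showing that $\Sq^{1}(\alpha)\cdot x = 0$ for every $\alpha \in H^{7}(M;\Z/2)$ and every $x \in \mathcal{D}_{M}$. Since $\Sq^{1}$ is a derivation and vanishes on $H^{8}(M;\Z/2)$ by Wu, we have $\Sq^{1}(\alpha)\,x = \alpha\,\Sq^{1}(x)$, and the hypothesis $\boc(x) = 0$ combined with $\Sq^{1} = \rho_{2}\boc$ gives $\Sq^{1}(x) = 0$, concluding (c).

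Parts (d) and (e) rely on applying (a) to well-chosen $y \in H^{6}(M;\Z/2)$. For (e), when $w_{4}(M) = 0$ part (b) collapses $\rho_{2}(v^{2}) = \rho_{2}(v)^{2}$ to $w_{2}(M)^{2}\rho_{2}(v)$; applying (a) to $y = \rho_{2}(cv)$ with $c$ an integral lift of $w_{2}(M)$ gives $\Sq^{2}(\rho_{2}(cv)) = w_{2}(M)\rho_{2}(cv) = w_{2}(M)^{2}\rho_{2}(v)$, as required. For (d), apply (a) to $y = w_{4}(M)\rho_{2}(u) \in H^{6}$: by Proposition~\ref{prop:sw}, $w_{2}(M)\,y = 0$, and Cartan expands $\Sq^{2}(y)$ using $\Sq^{1}(w_{4}(M)) = w_{5}(M) = 0$, $\Sq^{1}(\rho_{2}(u)) = 0$ and $\Sq^{2}(w_{4}(M)) = w_{6}(M)$ (again from Proposition~\ref{prop:sw}) to the identity $w_{6}(M)\rho_{2}(u) = w_{4}(M)\rho_{2}(u)^{2}$. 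Part (b) applied to $\rho_{2}(u)^{2} \in H^{4}$ then rewrites the right-hand side as $\rho_{2}(u)^{4} + w_{2}(M)^{2}\rho_{2}(u^{2})$. The first summand equals $\Sq^{2}(\rho_{2}(u^{3}))$ by a direct Cartan computation (using $\Sq^{1}(\rho_{2}(u)) = 0$ and $\Sq^{2}(\rho_{2}(u)^{2}) = \Sq^{1}(\rho_{2}(u))^{2} = 0$), and the second equals $\Sq^{2}(\rho_{2}(cu^{2}))$ by the same (e)-style application of (a). The principal obstacle is bookkeeping: one must track carefully which Cartan residuals vanish by $\Sq^{1}\Sq^{1} = 0$, which by Wu in top degree, and which require the spin$^{c}$ hypothesis through Proposition~\ref{prop:sw}.
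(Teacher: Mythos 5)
Your proof is correct. Parts (a)--(c) follow essentially the same route as the paper: pair with $x \in H^1(M;\Z/2)$, apply Wu's formula and the Cartan formula, kill the cross terms via $v_1 = 0$, $\Sq^1\Sq^1 = 0$, and then invoke Poincar\'e duality (and for (c), Lemma~\ref{lem:dm}). Part (d) takes a mildly different computational path---you apply part (a) to $y = w_4(M)\rho_2(u)$ and use $w_2 w_4 = 0$ from Proposition~\ref{prop:sw} to derive $w_6(M)\rho_2(u) = w_4(M)\rho_2(u)^2$, whereas the paper directly expands $\Sq^2(v_4(M))\cdot\rho_2(u)$ by Cartan---but both land on the same witness $\rho_2(u^3 + cu^2) \in \rho_2(H^6(M))$. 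The genuine divergence is in part (e): the paper argues dually, showing $x\cdot\rho_2(v^2)=0$ for every $x\in\mathcal{D}_M$ by factoring $x w_2(M)^2\rho_2(v)$ through $\rho_2(TH^3(M))$ and observing that the resulting product lands in the torsion of $H^9(M)\cong\Z$, then invokes Lemma~\ref{lem:dm} and Poincar\'e duality; you instead produce an explicit preimage, $\rho_2(v^2) = w_2(M)^2\rho_2(v) = \Sq^2(\rho_2(cv))$, by combining parts (a) and (b) with $w_4(M)=0$. Your version of (e) is more direct---it bypasses the annihilator characterization entirely and is arguably cleaner, though it buys nothing beyond economy since the duality argument is already available. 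Both are sound.
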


\begin{proof}
For any $x \in H^{1}(M; \Z/2)$ and $y \in H^{6}(M; \Z/2)$, since $v_1(M) = 0$ and $\Sq^1 \circ \Sq^1 = 0$, 
the Cartan formula gives
$$\Sq^1 ( x ) \cdot \Sq^1 ( y ) = \Sq^1 ( x \cdot \Sq^1 ( y ) ) + x \cdot \Sq^1 ( \Sq^1 ( y ) ) = v_1(M) \cdot x \cdot \Sq^1 ( y ) = 0.$$
Hence for any $z \in H^4 (M; \Z/2)$, we have $\Sq^1 ( x ) \cdot \Sq^3 ( z ) = \Sq^1 (x)  \cdot \Sq^1 ( \Sq^2 ( z ) ) = 0$.
Therefore, it follows from Equations \eqref{eq:v2} and \eqref{eq:v4} and the Cartan formula that
$$x \cdot \Sq^{2} ( y )  = \Sq^{2} (x \cdot y) = x \cdot v_{2}(M) \cdot y = x \cdot w_2(M) \cdot y,$$
and 
$$x \cdot z^2  = x \cdot \Sq^4 ( z ) = \Sq^4 ( x z )  = x \cdot v_4(M) \cdot z = x \cdot (w_4(M) + w_2^2(M) ) \cdot z.$$
Parts $(a)$ and $(b)$ now follow immediately from Poincar\'e duality.

For any $x \in \mathcal{D}_M$ and $z \in H^7(M;\Z/2)$, if $\boc(x) = 0$ then $\Sq^1 ( x ) = 0$.
Hence since $v_1(M) = 0$, the Cartan formula gives
$$ x \cdot \Sq^{1} ( y ) = \Sq^{1}(x \cdot y) + \Sq^{1}( x ) \cdot y = 0.$$
Part $(c)$ now follows from Lemma \ref{lem:dm}.

Now, suppose that $M$ is spin$^c$-able. 
For any $u \in H^2(M)$, 
it follows from Equation \eqref{eq:w6}, Parts $(a)$ and $(b)$ and the Cartan formula that
\begin{align*}
w_6(M) \rho_2 (u) & = \Sq^2 ( v_4(M) ) \cdot \rho_2 (u) \\
& = \Sq^2( v_4(M) \cdot \rho_2 (u) ) + v_4(M) \cdot \Sq^2 ( \rho_2(u) ) \\
& = v_4(M) (w_2(M) \rho_2 (u) + ( \rho_2 (u) )^2 ) \\
& = (\rho_{2}(u))^{4} + w_{2}^{2}(M) \cdot (\rho_{2}(u))^{2} \\
& = \Sq^2 (  ( \rho_2(u) )^3 + w_2(M) \cdot ( \rho_2 (u) )^2 ),
\end{align*} 
which completes the proof of Part $(d)$.

Finally, for any $x \in \mathcal{D}_{M}$ any $v \in H^{4}(M)$, Part $(b)$ and $w_4(M) = 0$ together imply that 
$$x \cdot \rho_{2}(y^{2})  =  x \cdot w^{2}_{2}(M) \cdot \rho_{2}(y) .$$ 
Then since $x \cdot w_{2}(M) \in \rho_{2}(TH^{3}(M))$ by the definition of $\mathcal{D}_M$, 
it follows that $x \cdot \rho_{2}(y^{2}) = 0 $.  Part $(e)$ now follows from Lemma \ref{lem:dm}.
\end{proof}

We conclude this subsection with
the following facts about the secondary cohomology operation $\Omega$.
\begin{lemma}[Lemma 1.5 in Thomas \cite{th67}]
\label{lem:Omegap}
Let $D_{\Omega}$ be the domain of $\Omega$ as in Section \ref{s:intro}. Then
\begin{enumerate}
\item[(a)] For any  $u, v \in D_{\Omega}$, 
$\Omega(u + v) = \Omega(u) + \Omega(v) + [\rho_2( u \cdot v )]$;

\item[(b)] For any  $w \in H^4 (M)$, $\Omega( 2w ) = [ \rho_2 (w^2) ]$. \hfill \qed
\end{enumerate}
\end{lemma}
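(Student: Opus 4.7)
The plan is to pass to the universal example and exploit the principal fibration underlying $\Omega$. Specifically, I would work with the principal fibration $p\colon E \to K(\Z,4)$ with fiber $K(\Z,6)$ classified by $\boc\,\Sq^{2}\rho_{2}\iota_{4}\in H^{7}(K(\Z,4))$. A class $u \in H^{4}(X)$ lies in $D_\Omega$ precisely when its classifying map $\bar u\colon X\to K(\Z,4)$ admits a lift $\tilde u\colon X \to E$. By Kudo's transgression theorem, the identity $\Sq^{2}\Sq^{3}\rho_{2}\iota_{4}=\Sq^{2}\rho_{2}\boc\,\Sq^{2}\rho_{2}\iota_{4}=0$ implies that $\Sq^{2}\rho_{2}\iota_{6}$ extends to a class $\omega \in H^{8}(E;\Z/\!2)$, and one sets $\Omega(u) = [\tilde u^{*}\omega]$, with indeterminacy $\Sq^{2}\rho_{2}H^{6}(X)$ arising from the ambiguity in the choice of lift.

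For part (a), I would compare two natural lifts of $u+v$: one direct, and one built from $\tilde u, \tilde v$ via the compatible H-space structures on $K(\Z,4)$ and $E$. The discrepancy is controlled by $\mu^{*}\omega-(\omega\otimes 1+1\otimes\omega)\in H^{8}(E\times E;\Z/\!2)$ modulo indeterminacy, where $\mu\colon E\times E\to E$ covers the H-space addition on the base. A K\"unneth analysis combined with the Cartan formula and $\rho_{2}\mu^{*}\iota_{4}=\rho_{2}\iota_{4}\otimes 1 + 1\otimes\rho_{2}\iota_{4}$ shows that the only surviving cross-term is $\rho_{2}\iota_{4}\otimes\rho_{2}\iota_{4}$. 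Pulling back via $\bar u\times\bar v$ and restricting to the diagonal yields exactly the correction $[\rho_{2}(u\cdot v)]$.

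For part (b), I would note that $\rho_{2}(2w)=0$ automatically, so $2w\in D_\Omega$ for every $w\in H^{4}(X)$; by naturality it suffices to compute the universal value at $w=\iota_{4}$. The multiplication-by-two map $\mu_{2}\colon K(\Z,4)\to K(\Z,4)$ satisfies $\mu_{2}^{*}(\boc\,\Sq^{2}\rho_{2}\iota_{4})=0$, yielding a canonical lift $\tilde\mu_{2}\colon K(\Z,4)\to E$ constructed from an explicit null-homotopy of $\rho_{2}\circ 2=0$ at the cochain level. A direct computation in the Serre spectral sequence shows $\tilde\mu_{2}^{*}\omega\equiv\rho_{2}(\iota_{4}^{2})$ modulo $\Sq^{2}\rho_{2}H^{6}(K(\Z,4))$, and pulling back by $w\colon X\to K(\Z,4)$ gives $\Omega(2w)=[\rho_{2}(w^{2})]$.

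The main obstacle is the cohomological bookkeeping in these universal-example computations; in particular, isolating the precise K\"unneth cross-term of $\mu^{*}\omega$ in part (a), and verifying at the cochain level that the canonical lift in part (b) pulls $\omega$ back to $\rho_{2}(\iota_{4}^{2})$ up to indeterminacy. These Serre-spectral-sequence computations comprise the technical heart of Thomas's original argument in \cite{th67}.
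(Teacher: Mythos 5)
The paper does not prove this lemma; it is quoted verbatim from Thomas, \emph{Complex structures on real vector bundles}, Lemma~1.5, and closed with a \textsc{qed} symbol to indicate that the reader is referred to \cite{th67} for the argument. So you are not really competing with a proof in the paper but reconstructing Thomas's original argument.

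Your sketch does correctly identify the right framework. The universal example is the principal fibration $K(\Z,6)\to E\to K(\Z,4)$ classified by $\boc\,\Sq^2\rho_2\iota_4$, lifts of the classifying map correspond to membership in $\mathcal D_\Omega$, and the Adem relation $\Sq^2\Sq^3=0$ (equivalently $\Sq^2\Sq^3\rho_2=\Sq^2\rho_2\boc\,\Sq^2\rho_2=0$) allows $\Sq^2\rho_2\iota_6$ to be extended to a class $\omega\in H^8(E;\Z/2)$ defining the operation. For part (a) your reduction of the deviation $\mu^*\omega-(\omega\otimes1+1\otimes\omega)$ to a multiple of $\rho_2\iota_4\otimes\rho_2\iota_4$ is in fact forced by the low-dimensional cohomology of $E$: since $H^k(E;\Z/2)=0$ for $k=1,2,3,5,7$ in the relevant range, the only possible cross term in degree $8$ of the K\"unneth decomposition sits in $H^4\otimes H^4$. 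For part (b) you could strengthen the argument by observing that $H^6(K(\Z,4);\Z)=0$, so the lift $\tilde\mu_2$ is unique up to homotopy and the universal indeterminacy $\Sq^2\rho_2H^6(K(\Z,4))$ actually vanishes; this makes the universal identity $\tilde\mu_2^*\omega=\rho_2(\iota_4^2)$ an exact equality rather than one ``modulo indeterminacy.'' You are also right that (b) cannot be collapsed to (a) by setting $u=v=w$, because $w$ is not assumed to lie in $\mathcal D_\Omega$; the universal calculation with $\mu_2$ is genuinely needed.

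That said, the proposal is a proof \emph{sketch} rather than a proof. The two steps you flag as the ``technical heart'' --- showing the coefficient of the cross term $\rho_2\iota_4\otimes\rho_2\iota_4$ is nonzero, and evaluating $\tilde\mu_2^*\omega$ in the Serre spectral sequence --- are exactly the computations that carry the mathematical content, and you do not carry them out. You correctly pin down \emph{where} the cross term can live but not \emph{that} it is present, and similarly the identification of $\tilde\mu_2^*\omega$ with $\rho_2(\iota_4^2)$ is asserted rather than derived. Since those are precisely the calculations Thomas supplies in \cite{th67}, your write-up would need them filled in to stand on its own as a replacement for the citation.
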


\subsection{The obstruction $\mathfrak{o}_8(M)$}  \label{ss:o8}
In this subsection we assume that $M$ is a spin$^c$-able $9$-manifold.
We will determine the obstruction $\mathfrak{o}_8(M)$ in terms of the secondary cohomology operation $\Omega$ as in Section \ref{s:intro} and a spin$^c$ characteristic class $p_c(M)$ of $M$; see Equation \eqref{eq:o8}.
We also give a condition for $0 \in \mathfrak{o}_8(M)$ 
in terms of the Stiefel-Whitney classes of $M$ and a complex vector bundle over $M$;
see Proposition \ref{prop:8obs}.

Let $X$ be a path-connected $CW$-complex 
and consider the canonical maps $F_1 \colon BSO \to BO$ and $F_2 \colon BSpin \to BSO$.
The fibre of $F_1$ is $\Z/2$, the fibre of $F_2$ is $\R P^\infty$, infinite real projective space
and we have the following exact sequences of abelian groups
\begin{align*}
\cdots \to [X, O] \to [X, \Z/2] & \to [X, BSO] \xrightarrow{ F_{1 \ast} } [X, BO] \to \cdots, \\
\cdots \to [X, SO] \to [X, \R P^{\infty}] & \to [X, BSpin] \xrightarrow{ F_{2 \ast} } [X, BSO] \to \cdots.
\end{align*}
Since the $H$-space maps $O \to \Z/2$ and $Spin \to \R P^\infty$ are both split,
it follows that the homomorphisms $[X, O] \to [X, \Z/2]$ and $[X, SO] \to [X, \R P^{\infty}]$ 
in the exact sequences above are both surjective.
Hence we have
\begin{proposition}
\label{prop:uniqSOSpin}
The homomorphisms $F_{1 \ast} \colon [X, BSO] \to [X, BO]$ and $F_{2 \ast} \colon [X, BSpin] \to [X, BSO]$ induced by $F_1$ and $F_2$ respectively are both injective. \hfill \qed
\end{proposition}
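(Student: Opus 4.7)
The plan is to invoke exactness of the two displayed long exact sequences directly, reducing each injectivity claim to a surjectivity statement about the preceding connecting map. Exactness at $[X, BSO]$ in the first sequence identifies $\ker F_{1\ast}$ with the image of $[X, \Z/2] \to [X, BSO]$, and exactness one step earlier shows that this image is trivial if and only if $[X, O] \to [X, \Z/2]$ is surjective. The analogous reduction for the second sequence equates injectivity of $F_{2\ast}$ with surjectivity of $[X, SO] \to [X, \R P^\infty]$.

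Next I would establish each of these two surjectivities by producing splittings of the underlying $H$-space maps and applying $[X, -]$. For $\det \colon O \to \Z/2$, an honest section is given by the stable inclusion of a reflection $\Z/2 \cong O(1) \hookrightarrow O$, whose composite with $\det$ is the identity on $\Z/2$. For the connecting map $SO \to \R P^\infty$, which classifies the double cover $Spin \to SO$ and corresponds to the generator of $H^1(SO; \Z/2) = \Z/2$, the splitting is subtler: one needs an $H$-map $\R P^\infty \to SO$ realising the identity on $\pi_1(SO) = \Z/2$. A natural candidate is to use the stable inclusion $\R P^3 = SO(3) \hookrightarrow SO$, which already realises the generator of $\pi_1(SO)$ at the level of the first Postnikov stage, and then to extend through the Postnikov tower of $SO$, verifying that the successive obstructions vanish.

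With both splittings in hand, the two long exact sequences immediately give $\ker F_{1\ast} = \ker F_{2\ast} = 0$, which is exactly the content of the proposition. The only step that is not a pure diagram chase is the construction of the second splitting, which is the main obstacle I would expect to have to address carefully; once it is secured, the conclusion follows formally.
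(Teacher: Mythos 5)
Your reduction via the two long exact sequences and the splitting of $\det\colon O \to \Z/2$ match the paper's proof exactly; the paper likewise asserts in a single sentence that both $H$-space maps split and leaves the second splitting unjustified, so you are in good company. Two remarks on the second splitting, which you rightly flag as the only nontrivial step.

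First, you do not need an $H$-map $\R P^\infty \to SO$. Surjectivity of $[X, SO] \to [X, \R P^\infty]$ for every $X$ follows from a \emph{space-level} homotopy section $s \colon \R P^\infty \to SO$ of the connecting map, since any $f \colon X \to \R P^\infty$ then lifts to $s \circ f$. Asking for compatibility with the $H$-structure is an unnecessary extra demand, and it is worth shedding it because it is not obviously satisfiable.

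Second, your proposed construction is not set up correctly and, as you note, leaves the obstruction-theoretic verification undone. The inclusion $\R P^3 \cong SO(3) \hookrightarrow SO$ is a map \emph{out of} $\R P^3$, not $\R P^\infty$, so ``extending through the Postnikov tower of $SO$'' really means extending over the higher skeleta of $\R P^\infty$, with obstructions in $H^{n+1}(\R P^\infty, \R P^3; \pi_n(SO))$, and you would still need to check those vanish. A cleaner route is an explicit section: for a fixed line $\ell_0$, define $s \colon \R P^\infty \to SO$ by $s(\ell) = R_\ell R_{\ell_0}$, where $R_\ell$ denotes the reflection in the hyperplane $\ell^\perp$. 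Each $R_\ell$ has determinant $-1$, so the product lands in $SO$, the assignment is continuous in $\ell$, and restricting to the generating loop of $\pi_1(\R P^\infty)$ produces the loop $t \mapsto$ (rotation by $2\pi t$ in a fixed $2$-plane), which generates $\pi_1(SO) \cong \Z/2$. Hence $s$ induces an isomorphism on $\pi_1$, i.e.\ it is a homotopy section of $SO \to K(\Z/2,1)$, and the surjectivity of $[X, SO] \to [X, \R P^\infty]$ follows. With this in place your argument is complete and coincides with the paper's.
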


Let $w_i$, $p_i$ and $c_i$ denote the universal Stiefel-Whitney classes, Pontrjagin classes and Chern classes respectively.  Recall from Thomas \cite[Theorem (1.2)]{th62} that 
$H^4(BSpin) \cong \Z$ is generated by $q_1$,  and $H^{8}(BSpin) \cong \Z \oplus \Z$ is generated by $q_1^2$ and $q_2$ respectively.
These generators have the following relationships with $p_i$ and $w_i$
(cf.\ \cite[(1.5), (1.6) and (1.8)]{th62}):
\begin{align}
F_2^{\ast} ( p_1 ) & = 2 q_1, & \rho_2 ( q_1 )  = F_2^{\ast} ( w_4 ), \label{eq:q1} \\
F_2^{\ast} ( p_2 ) & = 2 q_2 + q_1^2, & \rho_2 ( q_2 )  = F_2^{\ast} ( w_8 ). \label{eq:q2}
\end{align}
Now denote by $BSU$ the classifying space of the stable group $SU$
and by $F_3 \colon BSU \to BSpin$ the canonical map.
Since $F_3^{\ast} ( F_2^{\ast} ( p_1 ) ) = - 2 c_2$, $F_3^{\ast} ( F_2^{\ast} ( p_2 ) ) =  2 c_4 + c_2^2$ and $H^{\ast}(BSU)$ is torsion free, it follows that
\begin{align}\label{eq:cq}
F_3^{\ast} ( q_1 ) = - c_2, \qquad F_3^{\ast} ( q_2 ) = c_4.
\end{align}

Let $\wt \xi$ be a stable real vector bundle over $X$
and suppose that it is classified by a map $\wt \xi \colon X \to BO$.
The stable bundle $\wt \xi$ is spinnable if and only if the map $\wt \xi$ admits a lift 
$\wt \zeta \colon X \to BSpin$.
Recall that $\wt \xi$ is orientable if and only if $w_1(\wt \xi) = 0$ and that $\wt \xi$ is spinnable if and only if $w_1(\wt \xi) = 0$ 
and $w_2(\wt \xi) = 0$.
Now suppose that the stable vector bundle $\wt \xi$ is spinnable and $\wt \zeta \colon X \to BSpin$ is a lifting of the map $\wt \xi$.
Then it follows from Proposition \ref{prop:uniqSOSpin} that the map $\wt \zeta \colon X \to BSpin$ 
is unique up to homotopy and hence 
the spin characteristic classes of $\wt \xi$
\begin{align*}
q_i(\wt \xi) = \wt \zeta^{\ast} ( q_i ), ~ i = 1,~2
\end{align*}
are well defined and depend neither on the orientation nor on spin structure on $\wt \xi$.

Let $M$ be a spin$^c$-able $9$-manifold,
and $c \in H^2(M)$ a spin$^c$ characteristic class of $M$, i.e.\,\,$\rho_2(c) = w_2(M)$.
Denote by $l_{c}$ the complex line bundle over $M$ with $c_{1}(l_{c}) = c$.
Then, in the stable range, the real vector bundle $TM {-} (l_c)_{ \R }$ is spinnable, and we set
%
\begin{equation} \label{eq:pc}
p_c( M ) := - q_1( TM {-} (l_c)_{ \R } ) \in H^4 (M).%
\footnote{We note that $q_1( TM {-} (l_c)_{ \R } ) = \check{p}(M)$ and hence $p_c ( M )= - \check{p}(M)$,
where $\check{p}$ is the defined by the first author and 
Nordstr\"om in \cite[Section 2.7]{cn19}.}
%
\end{equation}
From the definition of $p_c ( M )$ and the remarks above, we see that 
$p_c ( M )$ depends neither on the orientation of $M$ 
nor on the spin structure of $TM {-} (l_c)_{ \R }$.
Hence, we have

\begin{lemma} \label{lem:pc}
Let $M$ be a spin$^c$-able $9$-manifold.  The class $p_c ( M )$ depends only on the choice of $c$.
 \hfill \qed
\end{lemma}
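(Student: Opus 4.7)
The plan is to verify that once $c \in H^2(M)$ with $\rho_2(c) = w_2(M)$ is fixed, the stable bundle $TM {-} (l_c)_\R$ admits well-defined spin characteristic classes, so that $p_c(M) = -q_1(TM {-} (l_c)_\R)$ is determined by $c$ alone (and not by any auxiliary choice of orientation or spin structure).

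First I would verify that $TM {-} (l_c)_\R$ is orientable and spinnable in the stable range. Orientability follows from $w_1(M) = 0$ and $w_1((l_c)_\R) = 0$, using the multiplicativity of total Stiefel--Whitney classes. For spinnability, the Whitney sum formula in the stable range together with $w_1 = 0$ gives $w_2(TM {-} (l_c)_\R) = w_2(M) + w_2((l_c)_\R)$ in $H^2(M; \Z/2)$. By construction $w_2((l_c)_\R) = \rho_2(c_1(l_c)) = \rho_2(c) = w_2(M)$, so this sum vanishes mod~$2$.

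Next I would apply Proposition \ref{prop:uniqSOSpin}. Since $TM {-} (l_c)_\R$ is orientable and spinnable, its classifying map $M \to BO$ admits a lift $\wt\zeta \colon M \to BSpin$. The injectivity of $F_{1\ast}$ and $F_{2\ast}$ in Proposition \ref{prop:uniqSOSpin} then guarantees that any two such lifts agree up to homotopy, irrespective of the chosen orientation of $M$ or spin structure on $TM {-} (l_c)_\R$. Consequently the pullback $q_1(TM {-} (l_c)_\R) = \wt\zeta^{\ast}(q_1) \in H^4(M)$ is an honest invariant of the stable unoriented bundle $TM {-} (l_c)_\R$, and the lemma follows.

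There is no substantive obstacle here; the content of the argument has already been carried out in the discussion preceding Proposition \ref{prop:uniqSOSpin} and in the paragraphs defining $p_c(M)$. The lemma merely records the conclusion that, after $c$ is chosen, neither the orientation of $M$ nor the spin structure on $TM {-} (l_c)_\R$ enters into the definition of $p_c(M)$.
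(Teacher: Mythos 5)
Your proposal is correct and takes essentially the same approach as the paper: the paper defers the proof entirely to the preceding discussion of spin characteristic classes and Proposition~\ref{prop:uniqSOSpin}, and your argument simply spells out that discussion (check $TM - (l_c)_\R$ is orientable and spinnable via the Whitney sum formula, then invoke the uniqueness of the lift to $BSpin$ to see $q_1$ is well-defined). No discrepancy.
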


Note that 
$ \rho_2( q_1 ( TM {-} (l_c)_{ \R } ) ) = w_4( TM {-} (l_c)_{ \R } ) $ by \eqref{eq:q1} and $w_4( TM {-} (l_c)_{ \R } ) = w_4(M)$ by a straightforward calculation.  It follows that $\rho_2( p_c ( M ) ) = w_4(M)$. 
Now since $\Sq^2 ( w_4 (M) ) = w_6(M)$ by Proposition \ref{prop:sw} and $\boc( w_6(M) ) = W_7(M) = 0 $ by Theorem \ref{thm:W7}, it follows that 
the action of $\Omega$ is well-defined on $ p_c (M)$.

\begin{lemma} \label{lem:omegac}
Let $M$ be a spin$^c$-able $9$-manifold. 
The value of $\Omega( p_c (M) )$ depends neither on the choice of $c$ nor on 
the choice of the orientation of $M$.
\end{lemma}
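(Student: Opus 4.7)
Orientation-independence of $\Omega(p_c(M))$ is immediate from Lemma~\ref{lem:pc}, which already shows that $p_c(M)$ itself depends only on the choice of $c$; so the substance of the lemma is $c$-independence of the coset $\Omega(p_c(M))$ in $H^8(M;\Z/2)/\Sq^2(\rho_2(H^6(M)))$. My plan is to fix two integral lifts $c,c'$ of $w_2(M)$, write $c' = c + 2x$ for some $x \in H^2(M)$, compute the difference $p_{c'}(M) - p_c(M) \in H^4(M)$ explicitly, and then apply Lemma~\ref{lem:Omegap} to compare the two cosets.

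For the difference, additivity of $q_1$ on spin bundles gives $p_{c'}(M) - p_c(M) = q_1(\alpha)$, where $\alpha := (l_{c'})_\R - (l_c)_\R$ is spinnable since $w_2(\alpha) = \rho_2(c'-c) = 0$. To evaluate $q_1(\alpha)$ without any $2$-torsion ambiguity, I would introduce the auxiliary virtual complex bundle $\eta := l_{c'} - l_c - l_{2x} + 1 \in K(M)$, which has $c_1(\eta) = 0$ and therefore lifts to $BSU$. A direct Chern character computation yields $\mathrm{ch}_2(\eta) = 2cx$, whence $c_2(\eta) = -2cx$, so by \eqref{eq:cq} we obtain $q_1((\eta)_\R) = 2cx$. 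For the complementary piece, the universal identity $q_1((l_{2x})_\R) = 2x^2$ follows by pulling back from $\C P^\infty$, whose cohomology is torsion-free and on which $q_1 = p_1/2$ is unambiguous. Combining these by additivity of $q_1$ yields $q_1(\alpha) = 2cx + 2x^2 = 2x(x+c)$, and hence $p_{c'}(M) - p_c(M) = 2x(x+c)$.

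Since $2x(x+c) \in \ker\rho_2$, Lemma~\ref{lem:Omegap}(a) and (b) give
\[
\Omega(p_{c'}(M)) - \Omega(p_c(M)) = \Omega(2x(x+c)) + [\rho_2(p_c(M)\cdot 2x(x+c))] = [\rho_2(x^2(x+c)^2)],
\]
the cross term vanishing modulo~$2$. Writing $\bar x = \rho_2(x)$ and $\bar c = \rho_2(c) = w_2(M)$, this reduces to $[\bar x^4 + \bar x^2\bar c^2]$, which by Lemma~\ref{lem:sq}(d) applied with $u = x$ is the class $[w_6(M)\rho_2(x)]$ and therefore lies in $\Sq^2(\rho_2(H^6(M)))$. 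Hence the coset difference is zero and $\Omega(p_{c'}(M)) = \Omega(p_c(M))$. The main obstacle is the first step: pinning down $p_{c'}(M) - p_c(M) = 2x(x+c)$ as a specific integral class requires the $BSU$-lift $\eta$ together with the universal $\C P^\infty$-computation to control $q_1$ on a virtual spin bundle where it might otherwise be ambiguous up to $2$-torsion.
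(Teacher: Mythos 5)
Your proof is correct, but it takes a genuinely different route from the paper's. The paper argues via stable complex structures on the $7$-skeleton: by Lemma~\ref{lem:acsc1} it produces a stable complex structure $\eta$ on $TM|_{M^{(7)}}$ with $c_1(\eta)=c$ and hence $c_2(\eta)=p_c(M)$ by \eqref{eq:c2pc}; for $c'=c+2x$ it takes $\eta'=\eta+l_x-l_{-x}$ with $c_2(\eta')=p_{c'}(M)$, and then invokes Thomas's Lemma~1.1 in \cite{th67} directly to conclude $\Omega(c_2(\eta))=\Omega(c_2(\eta'))$. You instead pin down the integral class $p_{c'}(M)-p_c(M)=q_1(\alpha)=2x(x+c)$ exactly (the $BSU$-lift $\eta$ plus the universal $\C P^\infty$ computation are indeed exactly what is needed to kill the potential $2$-torsion ambiguity in $q_1$ of a spin virtual bundle), and then reduce to the paper's own Lemma~\ref{lem:Omegap}(a),(b) and Lemma~\ref{lem:sq}(d). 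The trade-off is clear: the paper's version is shorter and conceptually cleaner, but outsources the heart of the matter to an external result about $\Omega$ and complex structures; your version is longer but entirely self-contained given the characteristic-class machinery already set up in the paper, and it exhibits the actual integral discrepancy $2x(x+c)$. One small remark: the identity $\rho_2(x)^4+\rho_2(x)^2 w_2(M)^2 = w_6(M)\rho_2(x)$ that you invoke is established inside the proof of Lemma~\ref{lem:sq}(d) rather than in its statement; you could equally well finish by observing directly that $\rho_2(x)^4+\rho_2(x)^2 w_2(M)^2 = \Sq^2\bigl(\rho_2(x^3+cx^2)\bigr) \in \Sq^2(\rho_2(H^6(M)))$, which avoids reaching into the proof of that lemma.
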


\begin{proof}
By Lemma \ref{lem:pc}
we only need to prove that $\Omega( p_c(M) )$ does not depend on the choice of $c$.
Let $\eta$ be a stable complex structure of $TM|_{M^{(7)}}$ with $c_1(\eta) = c$,
which exists by Lemma \ref{lem:acsc1}. 
Then we have $c_1(\eta - l_c) = 0$ and by Equation \eqref{eq:cq} we have
\begin{align}
\label{eq:c2pc}
c_{2}(\eta) = c_2( \eta {-} l_c ) = - q_1(\eta_{ \R } {-} (l_c)_{ \R }) = - q_1( TM {-} (l_c)_{\R} ) =  p_c (M),
\end{align}
Therefore, for any $x \in H^2 (M)$, 
$\eta^{\prime} = \eta + l_{x} - l_{-x}$ is also a stable complex structure of $TM|_{ M^{(7)} }$ with $c_{1}(\eta^{\prime}) = c + 2x$, $c_{2}(\eta^{\prime}) = p_{c+2x} (M)$.
By Thomas \cite[Lemma 1.1]{th67}
$$\Omega( p_c (M) )= \Omega( p_{c+2x}(M) )$$
and so the value of $\Omega( p_c(M) )$ does not depend on the choice of $c$.
\end{proof}


We now show that obstruction $\mathfrak{o}_8(M)$ 
equals $\Omega(p_c(M))$.
By \eqref{eq:c2pc}, $p_c (M) = c_2(\eta)$ for some stable complex structure $\eta$ of $TM|_{M^{(7)}}$ 
with $c_1(\eta) = c$.  From the proof of \cite[Theorem 1.2]{th67}, one sees that Thomas (cf. \cite[p. 898, (2.6)]{th67}) has proven that
$$
\mathfrak{o}_8 (M) = \Omega(p_c(M)) + [ w_8(M) + w_4^2(M) + w_2^2 (M) w_4(M)  ].$$
By Equation \eqref{eq:w8}, 
$w_{8}(M)  = w^{2}_{4}(M) + w^{4}_{2}(M)$ and by Proposition \ref{prop:sw} 
$w_{2}(M) w_{4}(M)  = 0$ 
and by the Cartan formula $w^{4}_{2}(M) = \Sq^{2} ( w^{3}_{2}(M) ) \in \Sq^{2} ( \rho_{2}(H^{6}(M)) )$.
It follows that $[ w_8(M) + w_4^2(M) + w_2^2 (M) w_4(M)  ] = [0]$ and hence
\begin{equation}\label{eq:o8}
\mathfrak{o}_8 ( M ) = \Omega( p_c ( M ) ).
\end{equation}
Combining Equation \eqref{eq:o8} with Lemma \ref{lem:M7}, we obtain

\begin{theorem} \label{thm:Omega}
Let $M$ be a spin$^c$-able $9$-manifold. 
$M$ admits a stable complex structure over $\mdot$ if and only if 
\[
\pushQED{\qed}
\Omega ( p_c (M) ) = [0] \in H^{8}(M; \Z/2)/\Sq^{2} ( \rho_{2} (H^{6}(M)) ). \qedhere
\popQED
\] 
\end{theorem}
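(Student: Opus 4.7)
The plan is to reduce Theorem \ref{thm:Omega} almost directly to Equation \eqref{eq:o8}, which has just been established. The one piece of geometry one needs to invoke separately is that, with the $CW$-structure on $M$ chosen at the start of Section \ref{s:Mcirc}, the $8$-skeleton $M^{(8)}$ is a deformation retract of $M^\circ = M - \textup{int}(D^9)$. Consequently, $M$ admits a stable complex structure over $M^\circ$ if and only if it admits one over $M^{(8)}$.

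First, I would recall that since $M$ is spin$^c$-able, Lemma \ref{lem:M7} gives a stable complex structure on $TM|_{M^{(7)}}$ (and Lemma \ref{lem:acsc1} lets us take its first Chern class to be our fixed lift $c$ of $w_2(M)$, though this refinement is not strictly needed here). The obstruction to extending any such structure across an $8$-cell is a class in $H^8(M;\Z/2)$, and as the choice of stable complex structure on $M^{(7)}$ varies, these obstructions sweep out the set $\mathfrak{o}_8(M)$, which by Lemma \ref{lem:obs}(c) is precisely a single coset of $\Sq^2(\rho_2(H^6(M)))$ in $H^8(M;\Z/2)$. Thus a stable complex structure exists over $M^{(8)}$, and hence over $M^\circ$, if and only if this coset is the zero coset in $H^8(M;\Z/2)/\Sq^2(\rho_2(H^6(M)))$.

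Second, I would appeal to Equation \eqref{eq:o8}, which identifies the coset $\mathfrak{o}_8(M)$ with $\Omega(p_c(M))$ in that same quotient group. Substituting this identification into the obstruction-theoretic criterion of the previous paragraph yields the equivalence asserted in the theorem.

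There is essentially no new difficulty at this stage: the substantive work has already been carried out. The identification $\mathfrak{o}_8(M) = \Omega(p_c(M))$ rested on Thomas's general description of $\mathfrak{o}_8$, on the choice $p_c(M) = c_2(\eta)$ together with Lemma \ref{lem:omegac} showing the ambiguity in $c$ is invisible to $\Omega$, and on the mod $\Sq^2(\rho_2(H^6(M)))$ vanishing of $w_8(M) + w_4^2(M) + w_2^2(M)w_4(M)$, for which one combines Equation \eqref{eq:w8}, Proposition \ref{prop:sw}, and the Cartan-formula observation $w_2^4(M) = \Sq^2(w_2^3(M))$. Once Equation \eqref{eq:o8} is in hand, Theorem \ref{thm:Omega} is a one-line consequence of obstruction theory.
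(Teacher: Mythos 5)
Your proposal is correct and follows the same route as the paper: the paper states Theorem \ref{thm:Omega} as an immediate consequence of Equation \eqref{eq:o8} (identifying $\mathfrak{o}_8(M)$ with $\Omega(p_c(M))$) combined with Lemma \ref{lem:M7}, which is precisely the combination you spell out. Your write-up merely makes explicit the obstruction-theoretic bookkeeping (the deformation retraction of $M^\circ$ onto $M^{(8)}$ and the coset structure from Lemma \ref{lem:obs}(c)) that the paper treats as understood.
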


The majority of the remainder of Section \ref{s:Mcirc} is devoted to determining the obstruction $\mathfrak{o}_8 ( M )$ in terms of the characteristic classes and cohomology ring of $M$.
%
We conclude this subsection by proving the homotopy
invariance of $\Omega(p_c(M))$, which requires a preparatory 

\begin{lemma}
\label{lem:q1mod4}
The class $\rho_4(q_1) \in H^4(BSpin; \Z/4)$ is a fiber homotopy invariant.
\end{lemma}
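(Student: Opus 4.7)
The plan is to reduce the statement to the classical computation of the image of the $J$-homomorphism in dimension three. Let $J \colon BSpin \to BSG$ denote the map classifying the underlying stable spherical fibration of a stable spin bundle. Two stable spin bundles over a space $X$ are fiber homotopy equivalent precisely when their classifying maps to $BSpin$ agree after composition with $J$, so every class in the image of $J^{\ast} \colon H^{\ast}(BSG; A) \to H^{\ast}(BSpin; A)$ is a fiber homotopy invariant.

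Let $F$ denote the homotopy fiber of $J$. Since $BSpin$ is $3$-connected with $\pi_{4}(BSpin) = \pi_{3}(Spin) \cong \Z$, and $\pi_{4}(BSG) = \pi_{3}(SG) \cong \Z/24$, the long exact sequence of the fibration reads
\[
\pi_{5}(BSG) \to \pi_{4}(F) \to \Z \xra{J_{\ast}} \Z/24 \to \pi_{3}(F) \to 0.
\]
The classical $J$-homomorphism $\pi_{3}(Spin) \to \pi_{3}(SG)$ is surjective with kernel $24\Z$, so $\pi_{3}(F) = 0$ and the image of $\pi_{4}(F) \to \pi_{4}(BSpin) = \Z$ is the subgroup $24\Z$. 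Hence $F$ is $3$-connected, and via the Hurewicz and universal coefficient theorems the restriction $H^{4}(BSpin; \Z) \to H^{4}(F; \Z) \cong \mathrm{Hom}(\pi_{4}(F), \Z)$ carries the generator $q_{1}$ to $24$ times a generator.

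Since $24 \equiv 0 \pmod{4}$, the reduction $\rho_{4}(q_{1})$ restricts to zero in $H^{4}(F; \Z/4)$. Because $F$ is $3$-connected, an inspection of the Serre spectral sequence of the fibration $F \to BSpin \to BSG$ in total degree $4$ shows that the edge map $H^{4}(BSpin; \Z/4) \to H^{4}(F; \Z/4)$ has kernel exactly equal to the image of $J^{\ast}$, so $\rho_{4}(q_{1})$ pulls back from $H^{4}(BSG; \Z/4)$ and is therefore a fiber homotopy invariant. The main step that is not entirely formal is the surjectivity of the $J$-homomorphism in dimension three with kernel $24\Z$, which I would cite from Adams' work on the image of $J$; everything else is a direct computation with the Serre spectral sequence and the Hurewicz theorem.
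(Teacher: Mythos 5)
Your strategy — restrict to the homotopy fibre of $J \colon BSpin \to BSG$ and use the image of the classical $J$-homomorphism — is natural, but the key claim that the fibre $F$ is $3$-connected is false, and this breaks both the Hurewicz step and the spectral-sequence step.

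From the long exact sequence of $F \to BSpin \to BSG$, using $\pi_2(BSpin) = \pi_3(BSpin) = 0$ together with $\pi_2(BSG) = \pi_1^s \cong \Z/2$ and $\pi_3(BSG) = \pi_2^s \cong \Z/2$, you get $\pi_1(F) \cong \Z/2$ and $\pi_2(F) \cong \Z/2$. You only checked $\pi_3(F) = 0$; the lower groups do not vanish. Concretely, $F$ sits in a fibration $G/O \to F \to K(\Z/2,1)$, so it carries an $\R P^\infty$-type factor. Once $F$ is not simply connected, $H^4(F;\Z) \cong \mathrm{Hom}(\pi_4(F),\Z)$ is not justified (there are contributions from $\pi_1$, $\pi_2$ and possible $\pi_1$-actions), so the statement that $q_1$ restricts to $24$ times a generator does not follow from Hurewicz plus universal coefficients. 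The subsequent Serre spectral sequence step is also incomplete: asserting that the kernel of restriction to $F$ equals the image of $J^{\ast}$ means the intermediate filtration quotients $E_{\infty}^{1,3}, E_{\infty}^{2,2}, E_{\infty}^{3,1}$ all vanish in total degree $4$, and with $\pi_1(F) = \Z/2$ and $\pi_2(BSG) = \Z/2$ feeding nonzero groups into the $E_2$-page, that requires a genuine computation you have not done.

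The paper's proof sidesteps both problems. It replaces $BSG$ with $BGSpin$, the homotopy fibre of $w_2 \colon BSG \to K(\Z/2,2)$; since $BGSpin$ is $2$-connected, the fibre of $J_{Spin} \colon BSpin \to BGSpin$ is $G/O$, which is simply connected, and the remark after Lemma~\ref{lem:q1mod4} can then run the Serre spectral sequence argument cleanly. Even so, the proof does not attempt a Hurewicz computation of the restriction of $q_1$: instead it observes that $H^4(G/O) \cong \Z$ forces $\times 2 \colon H^4(G/O;\Z/4) \to H^4(G/O;\Z/8)$ to be injective, and then imports Madsen's theorem that $\rho_8(p_1) \in H^4(BSO;\Z/8)$ is a fibre homotopy invariant; combined with $F_2^{\ast}(p_1) = 2q_1$ this gives $2\,i_{Spin}^{\ast}(\rho_4(q_1)) = 0$ and hence $i_{Spin}^{\ast}(\rho_4(q_1)) = 0$. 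If you want to salvage your approach, you should at minimum replace $F$ by $G/O$ (with base $BGSpin$), and you would still need an independent computation of $H^4(G/O;\Z)$ and of the restriction of $q_1$ — at which point the Madsen-theorem route is genuinely shorter.
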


\begin{proof}
Let $J_{SO} \colon BSO \to BSG$ be the map classifying the transition from oriented stable vector bundles 
to oriented stable spherical fibrations and $G/O$ be its homotopy fibre.
We consider the following commutative diagram
\[
\xymatrix{
\cdots \ar[r] & H^3(G/O; \Z/2) \ar[d]^{=} \ar[r]^{\beta^{\Z/\!2}} & H^{4}(G/O) \ar[d]^{\rho_4} \ar[r]^{\times 2} & H^4(G/O) \ar[d]^{\rho_8} \ar[r] & \cdots \\
\cdots \ar[r] & H^3(G/O; \Z/2)  \ar[r]^{\beta^{\Z/4}_{\Z/2}} & H^{4}(G/O; \Z/4)  \ar[r]^{\times 2} & H^4(G/O; \Z/8)  \ar[r] & \cdots~,
}
\]
where the bottom line is the Bockstein sequence associated to the coefficient sequence 
$\Z/4 \xrightarrow{\times 2} \Z/8 \to \Z/2$.
Now it is well-known that $H^4(G/O) \cong \Z$; this can be seen from knowledge of 
the homotopy groups $\pi_i(G/O)$ for $i \leq 5$ and the Serre Spectral Sequence
of the $4$th Postnikov stage of $G/O$.
Hence 
$\boc$ and 
$\beta^{\Z/4}_{\Z/2} = \rho_4 \circ \boc$ are both trivial and
therefore $\times 2 \colon H^{4}(G/O; \Z/4) \to H^{4}(G/O; \Z/8)$ is injective.

Denote by $BGSpin$ the homotopy fibre of $w_2 \colon BSG \to K(\Z/2, 2)$, where $w_2$ represents the generator of $H^2(BSG; \Z/2) \cong \Z/2$.
Since $BSpin$ is the homotopy fibre of $w_2 \colon BSO \to K(\Z/2, 2)$,
there is a commutative diagram of fibrations
%
\begin{equation} \label{diag:BGSpin}
\xymatrix{
G/O \ar[d]^{i_{Spin}} \ar[r]^= &
G/O \ar[d]^{i_{SO}} \\
BSpin \ar[d]^{J_{Spin}} \ar[r]^{F_2} &
BSO \ar[d]^{J_{SO}} \\
BGSpin \ar[r] &
BSG,}
\end{equation}
where $J_{Spin}$ is the pull-back of $J_{SO}$.
Since $q_1$ is additive under the Whiney sum of spin vector bundles,
it suffices to prove that $i^{\ast}_{Spin}(\rho_4(p_1)) = 0$.
By a theorem of Madsen \cite[(5.1) Theorem]{mad75}, 
$\rho_8(p_1) \in H^4(BSO; \Z/8)$ lies in the image of $J_{SO}^*$
and so
$$ i^{\ast}_{SO}(\rho_8 (p_1) ) = 0 \in H^4 (G/O; \Z/8).$$
By Equation \eqref{eq:q1} and the commutative diagram \eqref{diag:BGSpin}, we have
$$ 2 i^{\ast}_{Spin}(\rho_4 (q_1)) =  i^{\ast} _{Spin}(\rho_8( 2 q_1)) = 
i^{\ast}_{Spin}(F_2 ^{\ast}(\rho_8 (p_1) ) ) =  i^{\ast}_{SO}(\rho_8(p_1) )  = 
0 \in H^4 (G/O; \Z/8).$$
The injectivity of $\times 2 \colon H^{4}(G/O; \Z/4) \to H^{4}(G/O; \Z/8)$ implies that 
$ i^{\ast}_{Spin}(\rho_4 (q_1) ) = 0$.
%
%
\end{proof}

\begin{remark}
In fact $\rho_4(q_1)$ lies in the image of $J^{\ast}_{Spin}$, as can be seen as follows.
Since $G/O$ is simply connected and $BGSpin$ is $2$-connected, 
the Serre spectral sequence of the fibration $G/O \to BSpin \to BGSpin$
gives a short exact sequence 
$$H^{4}(BGSpin; \Z/4) \xrightarrow{J^{\ast}_{Spin}} H^4(BSpin; \Z/4) \xrightarrow{i^{\ast}_{Spin}} H^4(G/O; \Z/4).$$
Since $i^{\ast}_{Spin}( \rho_4( q_1 ) ) = 0$, we have that 
$\rho_4(q_1)$ lies in the image of $J^{\ast}_{Spin}$. 
\end{remark}

\begin{proposition}
\label{prop:htpopc}
Let $M$ be a spin$^c$-able $9$-manifold. 
Then $\Omega( p_c (M) )$ is a homotopy invariant of $M$.
\end{proposition}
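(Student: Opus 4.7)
The plan is to combine Lemma \ref{lem:q1mod4} (that $\rho_4(q_1)$ is a fiber homotopy invariant) with the homotopy invariance of the stable tangent spherical fibration of a closed manifold, and then absorb the remaining $4$-divisible ambiguity using the addition and scaling identities of Lemma \ref{lem:Omegap}.

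First, let $f \colon N \to M$ be a homotopy equivalence of closed orientable $9$-manifolds. Since Stiefel-Whitney classes are homotopy invariants, $N$ is spin$^c$-able and $f^*(w_2(M)) = w_2(N)$. I would fix an integral lift $c_M \in H^2(M)$ of $w_2(M)$ and set $c_N := f^*(c_M)$, which is an integral lift of $w_2(N)$. By Lemma \ref{lem:omegac}, it suffices to prove
\[
f^*\bigl(\Omega(p_{c_M}(M))\bigr) = \Omega\bigl(p_{c_N}(N)\bigr) \in H^8(N; \Z/2)/\Sq^2(\rho_2(H^6(N))).
\]

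The crux is to compare $p_{c_N}(N)$ with $f^*(p_{c_M}(M))$ modulo $4$. Since the stable tangent spherical fibration of a closed manifold is a homotopy invariant (via the Spivak normal fibration), the classifying maps of $TM$ and $TN$ into $BSG$ agree after pre-composition with $f$ up to homotopy. Because $f^*(l_{c_M}) = l_{c_N}$, the spin bundles $f^*(TM - (l_{c_M})_\R)$ and $TN - (l_{c_N})_\R$ over $N$ determine the same class in $[N, BSG]$; that is, they are stably fiber homotopy equivalent. Lemma \ref{lem:q1mod4} then gives
\[
\rho_4\bigl(p_{c_N}(N)\bigr) = f^*\bigl(\rho_4(p_{c_M}(M))\bigr),
\]
so there is a class $x \in H^4(N)$ with $p_{c_N}(N) = f^*(p_{c_M}(M)) + 4x$.

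To finish, I would apply Lemma \ref{lem:Omegap} together with naturality of $\Omega$. Since $\rho_2(4x) = 0$, Lemma \ref{lem:Omegap}(a) yields
\[
\Omega\bigl(p_{c_N}(N)\bigr) = f^*\bigl(\Omega(p_{c_M}(M))\bigr) + \Omega(4x) + \bigl[\rho_2(f^*(p_{c_M}(M))\cdot 4x)\bigr] = f^*\bigl(\Omega(p_{c_M}(M))\bigr) + \Omega(4x),
\]
and Lemma \ref{lem:Omegap}(b) gives $\Omega(4x) = \Omega(2 \cdot 2x) = [\rho_2((2x)^2)] = [\rho_2(4x^2)] = [0]$. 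The main obstacle is cleanly invoking the homotopy invariance of the stable tangent spherical fibration to produce the stable fiber homotopy equivalence feeding into Lemma \ref{lem:q1mod4}; once that is in hand, the rest is a direct calculation.
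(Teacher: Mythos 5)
Your proof is correct and follows essentially the same route as the paper's: both reduce the statement to Lemma~\ref{lem:q1mod4}, deduce $p_{c_N}(N) \equiv f^*(p_{c_M}(M)) \bmod 4$, and then absorb the $4$-divisible difference using Lemma~\ref{lem:Omegap}(a) and (b) together with naturality of $\Omega$. The only cosmetic difference is the source of the homotopy invariance of the stable tangent spherical fibration: you invoke the Spivak normal fibration, whereas the paper cites Atiyah's theorem on Thom complexes \cite[Theorem 3.5]{at61}; for closed smooth manifolds these give the same conclusion.
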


\begin{proof}
Let $f  \colon M \to M^{\prime}$ be a homotopy equivalence 
and chose orientations on $M$ and $M'$ so that $f$ is orientation preserving.
Let $c^{\prime} \in H^2 ( M^{\prime} )$ be any integral lift of  $w_2( M^{\prime} )$.
Since the Stiefel-Whitney classes are homotopy invariants, it follows that $c = f^{\ast} ( c^{\prime} )$ is an integral lift of $ w_2(M)$.

By Atiyah \cite[Theorem 3.5]{at61}, the stable bundles $TM$ and $f^{\ast}(TM^{\prime})$ are fibre homotopy equivalent.
This implies that $TM - (l_c)_{\R}$ and $f^{\ast}(TM^{\prime} - ( l_{c^{\prime}} )_{\R}) = f^{\ast}(TM^{\prime}) - (l_c)_{\R} $ are fiber homotopy equivalent.  
Now by Lemma \ref{lem:q1mod4} we have that
$q_1( TM - (l_c)_{\R} ) \equiv q_1(f^{\ast}(TM^{\prime} - ( l_{c^{\prime}})_{\R}) ) \bmod{4}$
and from the definiton of $p_c(M)$ we have
$$p_c(M) \equiv f^{\ast}( p_{ c^{\prime} } ( M^{\prime} ) ) \bmod{4}.$$ 
Thus there is a class $x \in H^{4}(M)$ such that $f^{\ast}( p_{ c^{\prime} } ( M^{\prime} ) ) - p_c (M) = 4 x$ and by Lemma \ref{lem:Omegap} by
\begin{align*}
f^{\ast} ( \Omega( p_{ c^{\prime} } ( M^{\prime} ) ) ) 
=  \Omega( f^{\ast}( p_{ c^{\prime} } ( M^{\prime} ) ) )
= \Omega( p_c (M) + 4x) = \Omega( p_c (M) ) + \Omega( 4x ) 
= \Omega( p_c(M) ),
\end{align*}
which completes the proof.
\end{proof}

\subsection{An application of the differential Riemann-Roch theorem}

The main result of this subsection is Theorem \ref{thm:betad}, which we prove using
the differential Riemann-Roch theorem.

For a spin$^c$-able $9$-manifold $M$, 
let $c \in H^{2}(M)$ and $v \in H^{6}(M)$ be integral lifts of 
$w_2(M)$ and $w_6(M)$ respectively: such a $v$ exists by Theorem \ref{thm:W7}. 
By Proposition \ref{prop:sw}, $\rho_{2}(cv) = w_{2}(M)w_{6}(M) = 0$.
So there is are integral classes $d \in H^8(M) $ such that $2 d = cv$ and
we ambiguously denote such a class $d$ by $cv/2$.
Suppose that $\boc (x) =0$ for all $x \in \mathcal{D}_{M}$.
It follows from Lemma \ref{lem:sq}(a)(c)(d) of that the coset 
$[\rho_2(cv/2)]$ depends neithor on the choice of $c$ or $v$ nor on the selection of $cv/2$
and so the same holds for  the coset $[w_{8}(M) - \rho_{2}(cv/2)]$.

\begin{theorem}\label{thm:betad}
Let $M$ be a spin$^c$-able $9$-manifold. 
Suppose that $\boc (x) =0$ for all $x \in \mathcal{D}_{M}$. 
Then $M$ admits a stable complex structure over $\mdot$ if and only if 
$$[w_{8}(M) - \rho_{2}(cv/2)] = [0] \in H^{8}(M; \Z/2) / \Sq^{2} ( \rho_{2} ( H^{6}(M) ) )$$
holds for some integral lifts $c \in H^{2}(M)$ and $v \in H^{6}(M)$ of $w_2(M)$ and $w_6(M)$ respectively.
\end{theorem}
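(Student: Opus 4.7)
By Theorem \ref{thm:Omega}, $M$ admits a stable complex structure over $\mdot$ if and only if $\Omega(p_c(M)) = [0]$. Hence the theorem reduces to establishing the identity
\begin{equation*}
\Omega(p_c(M)) = [w_8(M) - \rho_2(cv/2)] \in H^8(M; \Z/2)/\Sq^2(\rho_2(H^6(M))),
\end{equation*}
which is what I aim to prove.

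The plan is to pick, via Lemma \ref{lem:acsc1}, a stable complex structure $\eta$ on $TM|_{M^{(7)}}$ with $c_1(\eta) = c$, so that $c_2(\eta) = p_c(M)$ by Equation \eqref{eq:c2pc} and $\rho_2(c_3(\eta)) = w_6(M)|_{M^{(7)}}$; an appropriate integral lift $v \in H^6(M)$ will serve as $c_3$. The main technical input is the differential Riemann-Roch identity among characteristic classes of a complex vector bundle,
\begin{equation*}
2 c_4 = p_2 - c_2^2 + 2 c_1 c_3,
\end{equation*}
applied to a suitable complex-bundle extension of $\eta$. Since $\rho_2(cv) = w_2(M) \cdot w_6(M) = 0$ by Proposition \ref{prop:sw}, the product $c_1 c_3 = cv$ is divisible by $2$, and a choice of $cv/2 \in H^8(M)$ furnishes an integral ``half'' feeding into the formula.

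From this integral identity I would extract, after dividing by $2$ and reducing mod $2$, the relation $\rho_2(c_4) = \rho_2((p_2 - c_2^2)/2) + \rho_2(cv/2)$; coupling this with the identification $\rho_2(c_4) = w_8(M)$ via the underlying real bundle produces an integral expression whose mod $2$ reduction relates $w_8(M)$, $p_c(M)$, and $cv/2$. Combining this with Thomas's formula $\mathfrak{o}_8(M) = \Omega(c_2(\eta))$ used in the derivation of Equation \eqref{eq:o8} then yields $\Omega(p_c(M)) = [w_8(M) - \rho_2(cv/2)]$, as required.

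The chief obstacle is the extension issue: $c_4$ of a stable complex extension of $\eta$ may not be defined over all of $M$ if $\Omega(p_c(M)) \neq 0$. My workaround is to replace the stable complex extension by any complex vector bundle $\xi$ on $M$ with $\xi|_{M^{(7)}}$ stably equivalent to $\eta$, and to absorb the resulting discrepancy into the indeterminacy $\Sq^2(\rho_2(H^6(M)))$ using the torsion hypothesis $\boc(x) = 0$ for $x \in \mathcal{D}_M$ via Lemmas \ref{lem:sq} and \ref{lem:dm}. This same hypothesis is what makes $[w_8(M) - \rho_2(cv/2)]$ well-defined to begin with.
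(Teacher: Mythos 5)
Your proposed reduction — using Theorem~\ref{thm:Omega} to rephrase the theorem as the identity $\Omega(p_c(M)) = [w_8(M) - \rho_2(cv/2)]$ — is a reasonable reformulation, but there is a genuine gap in the way you try to establish it. The relation you call ``the differential Riemann-Roch identity,''
\[
2c_4 = p_2 - c_2^2 + 2 c_1 c_3,
\]
is not the differential Riemann-Roch theorem; it is the standard formal identity expressing $p_2(\eta_\R)$ in the Chern classes of a complex bundle, valid for every complex vector bundle. As a formal identity it cannot possibly produce the non-trivial divisibility that the theorem requires. Indeed, dividing by $2$ gives $c_4 - c_1 c_3 = (p_2 - c_2^2)/2$, and since $\rho_2(c_1 c_3) = w_2(M)w_6(M) = 0$ (Proposition~\ref{prop:sw}), reducing mod $2$ yields only
\[
\rho_2(c_4) = \rho_2\bigl( (p_2 - c_2^2)/2 \bigr),
\]
with \emph{no} $\rho_2(cv/2)$ term appearing. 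The displayed relation in your proof,
$\rho_2(c_4) = \rho_2((p_2 - c_2^2)/2) + \rho_2(cv/2)$,
is therefore an algebraic error; the class $\rho_2(c_1c_3)$ that does appear vanishes, it does not become $\rho_2(c_1 c_3/2)$. Unwinding further, the relation you would need, $\rho_2(c_4) \equiv \rho_2(c_1 c_3/2) \pmod{\Sq^2(\rho_2(H^6(M)))}$, is equivalent to $(p_2-c_2^2)/2 \equiv c_1c_3/2 \pmod{2}$, which is exactly the content of Proposition~\ref{prop:w8eta} and cannot be extracted from the Chern--Pontryagin identity by any rearrangement — you have gone in a circle.

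The paper's proof of this congruence is genuinely index-theoretic: Proposition~\ref{prop:w8eta} is established by evaluating a twisted $\wh A$-genus on $M \times S^1$ and invoking the integrality statement of Atiyah--Hirzebruch's differential Riemann-Roch theorem (their Corollary~1), which shows that $\langle \bar x \cdot (\tfrac12 c_1(\eta)c_3(\eta) + c_4(\eta)), [M]\rangle$ is even for every $\bar x$ with $\rho_2(\bar x) \in \mathcal{D}_M$; Lemma~\ref{lem:dm} then converts this into the coset statement. This integrality is the essential geometric input, and nothing in your proposal supplies a substitute for it. Your ``workaround'' of replacing a stable extension over $\mdot$ by a complex bundle $\xi$ defined over all of $M$ and absorbing the difference $w_8(\xi) - w_8(M)$ into the indeterminacy is exactly the role played by Proposition~\ref{prop:8obs}, but you cannot simply absorb it: $[w_8(\xi) - w_8(M)]$ being $[0]$ is \emph{equivalent} to the existence of the stable complex structure (that is what Proposition~\ref{prop:8obs} says), so assuming it as part of an intermediate manipulation would beg the question. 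In short: the overall skeleton of the argument (reduce to a characteristic-class identity for a bundle $\eta$ with $c_1(\eta) = c$, $c_2(\eta) = p_c(M)$, $c_3(\eta)$ lifting $w_6(M)$, and handle the extension issue via a bundle defined over all of $M$) is consistent with the paper, but the decisive step — the differential Riemann--Roch integrality argument — is absent and has been replaced by a formal identity that cannot do the job.
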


In order to prove Theorem \ref{thm:betad}, we need the two following results.

\begin{proposition}\label{prop:8obs}
Let $M$ be a $9$-dimensional manifold. 
Then $M$ admits a stable complex structure over $\mdot$ (i.e.\ $0 \in \mathfrak{o}_8(M)$)
if and only if there exists a complex vector bundle $\eta$ over $M$ such that $\eta|_{M^{(7)}}$ is stably isomorphic to a stable complex structure of $TM|_{ M^{ (7) } }$ and
$$w_{8}(\eta) - w_{8}(M) \in \Sq^{2} ( \rho_{2}(H^{6}(M)) ).$$
\end{proposition}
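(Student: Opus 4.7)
I plan to prove the two directions separately; in each case I will use Lemma \ref{lem:8ext9} to pass between complex bundles on $\mdot$ and on $M$, and compare the two stable real bundles $TM$ and $\eta_{\R}$.

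For the forward direction, suppose $0 \in \mathfrak{o}_8(M)$, so that some stable complex structure $h$ on $TM|_{M^{(7)}}$ extends over $\mdot = M^{(8)}$ to a stable complex vector bundle on $\mdot$. By Lemma \ref{lem:8ext9} this extends to a complex vector bundle $\eta$ on $M$. By construction $\eta|_{\mdot}$ is stably isomorphic as a complex bundle to $(TM|_{\mdot}, h)$, so $\eta_{\R}|_{\mdot}$ and $TM|_{\mdot}$ are stably isomorphic as real bundles and $w_8(\eta)$ and $w_8(M)$ have the same image in $H^8(\mdot; \Z/2)$. Since $\mdot$ is the $8$-skeleton of $M$, the restriction $H^8(M; \Z/2) \to H^8(\mdot; \Z/2)$ is injective, so $w_8(\eta) = w_8(M)$, which \emph{a fortiori} lies in $\Sq^{2}(\rho_2(H^6(M)))$. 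The bundle $\eta|_{M^{(7)}}$ is stably isomorphic to the stable complex structure $(TM|_{M^{(7)}}, h)$ by construction.

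For the backward direction, let $\eta$ satisfy the two conditions, and let $h$ be a stable complex structure on $TM|_{M^{(7)}}$ with $\eta|_{M^{(7)}}$ stably isomorphic to $(TM|_{M^{(7)}}, h)$. Set $\delta := TM - \eta_{\R} \in \wt{KO}(M)$, so $\delta|_{M^{(7)}} = 0$; in particular $w_i(\delta) = 0$ for $i \le 7$ by injectivity of $H^i(M; \Z/2) \to H^i(M^{(7)}; \Z/2)$. My goal is to show $\mathfrak{o}_8(h) \in \Sq^{2}(\rho_2(H^6(M)))$, equivalently $0 \in \mathfrak{o}_8(M)$. A stable complex structure on $TM|_{\mdot}$ extending $h$ amounts, after subtracting the stable complex structure $\eta|_{\mdot}$ on $\eta_{\R}|_{\mdot}$, to a lift of $\delta|_{\mdot}$ through $\wt{KU}(\mdot) \to \wt{KO}(\mdot)$ whose restriction to $M^{(7)}$ is zero. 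Hence $\mathfrak{o}_8(h)$ equals the primary obstruction to such a lift. Since $\delta|_{\mdot}$ has Atiyah--Hirzebruch filtration $\ge 8$, this primary obstruction in $H^8(M; \pi_7(SO/U)) = H^8(M; \Z/2)$ coincides with $w_8(\delta)$. By the Whitney sum formula and $w_i(\delta) = 0$ for $i \le 7$, we have $w_8(\delta) = w_8(TM) - w_8(\eta_{\R}) = w_8(M) - w_8(\eta)$, which equals $w_8(\eta) - w_8(M)$ in $\Z/2$-coefficients. The hypothesis now places $\mathfrak{o}_8(h)$ in $\Sq^{2}(\rho_2(H^6(M)))$ as required.

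The main technical point I expect to require care is the identification of the primary obstruction for an element of $F^8 \wt{KO}(\mdot)$ to admit a stable complex structure with $w_8$. I would verify this from the fibration $BU \to BSO$ with fibre $SO/U$: the long exact homotopy sequence reads
\[
\pi_8(BU) = \Z \xrightarrow{~\times 2~} \pi_8(BSO) = \Z \xrightarrow{~\rho_2~} \pi_7(SO/U) = \Z/2 \to 0,
\]
and the universal class $w_8 \in H^8(BSO; \Z/2)$ detects $\pi_8(BSO)$ modulo $2$, giving the universal primary obstruction for the lifting problem on bundles trivial on the $7$-skeleton.
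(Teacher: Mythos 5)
Your forward direction is essentially the same as the paper's (the paper just says it is ``trivial by Lemma \ref{lem:8ext9}''), but your backward direction is a genuinely different argument. The paper proceeds constructively: it sets $\zeta = \eta_{\R} - TM$, notes $q_1(\zeta)=0$, invokes Li--Duan to write $q_2(\zeta)=3v$, uses Adams' realisation theorem to produce a complex bundle $\gamma$ with $c_4(\gamma)=q_2(\zeta)$ and $\gamma_\R$ trivial on $M^{(7)}$, and then applies a result of the second author \cite[Proposition 2.2]{ya18} to the difference $\zeta - \gamma_\R$ (which has $q_2 = 0$). You instead compute $\mathfrak{o}_8(h)$ directly as $w_8(TM-\eta_\R)$ by using the torsor structure of lifts over $[X,SO/U]$ and the universal fact that $w_8$ detects $\pi_8(BSO)$ modulo $2$. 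Your route is shorter and more conceptual, and avoids the three external realisation inputs; its cost is that the key universal fact needs the same kind of computation the paper bundles into Equation \eqref{eq:q2} (namely $w_8 = \rho_2(q_2)$ and $p_2 = 2q_2 + q_1^2$, so that $q_2$ of the generator of $\pi_8(BSO)$ equals $\pm 3$ and hence $w_8$ of the generator is nonzero --- note $\rho_2(p_2)$ alone would fail here), and the identification ``$\mathfrak{o}_8(h) = $ primary obstruction for $\delta$'' requires a short torsor/naturality argument that you assert but do not spell out. You also silently pass between the space-level definition of $\mathfrak{o}_8(h)$ and the $K$-theoretic lifting problem for $\delta|_{\mdot}$; this is fine here because $\mathfrak{o}_8(M)$ is a coset of $\Sq^2(\rho_2 H^6(M))$ and you only need $w_8(\delta) \in \mathfrak{o}_8(M)$, but you should say so rather than claim the sharper equality $\mathfrak{o}_8(h) = w_8(\delta)$ for the particular $h$. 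Modulo filling in those two points, your proof is correct.
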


\begin{proposition}\label{prop:w8eta}
Let $M$ be as in Theorem \ref{thm:betad} and $\eta$ be a complex vector bundle over $M$ such that $\eta|_{M^{(7)}}$ is stably isomorphic to a stable complex structure of  $TM|_{ M^{(7)} } $. Then
$$ [w_{8}(\eta) - \rho_{2}(c_{1}(\eta) c_{3}(\eta)/2) ] = [0] \in H^{8}(M; \Z/2) / \Sq^{2} ( \rho_{2} (H^{6}(M)) ).$$
\end{proposition}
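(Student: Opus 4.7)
The plan is to rewrite $c_4(\eta) - c_1(\eta) c_3(\eta)/2$ in terms of Pontrjagin classes and then extract the required congruence via the differential Riemann-Roch theorem applied to an auxiliary spin$^c$ bordism.

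First, since $\eta|_{M^{(7)}}$ is stably a complex structure on $TM|_{M^{(7)}}$ and the restriction $H^{2i}(M)\to H^{2i}(M^{(7)})$ is an isomorphism for $i \le 3$, we have $\rho_2(c_i(\eta)) = w_{2i}(M)$ for $i = 1, 2, 3$. Combined with Proposition~\ref{prop:sw}, this yields $\rho_2(c_1(\eta) c_3(\eta)) = w_2(M) w_6(M) = 0$, so $c_1(\eta) c_3(\eta)/2 \in H^8(M)$ is well-defined up to $2$-torsion. The resulting ambiguity in its mod~$2$ reduction lies in $\Sq^1 H^7(M; \Z/2)$, which by Lemma~\ref{lem:sq}(c) is contained in $\Sq^2\rho_2(H^6(M))$ under our hypothesis on $\mathcal{D}_M$; hence the class $[w_8(\eta) - \rho_2(c_1(\eta) c_3(\eta)/2)]$ is well-defined in the target quotient.

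Second, I would use the standard Milnor--Stasheff identity $p_2(V_\R) = c_2(V)^2 - 2 c_1(V) c_3(V) + 2 c_4(V)$ for a complex bundle $V$ to write
\[
c_4(\eta) - \tfrac{1}{2}\, c_1(\eta) c_3(\eta)
= \tfrac{1}{2}\bigl(p_2(\eta_\R) - c_2(\eta)^2 + c_1(\eta) c_3(\eta)\bigr).
\]
The bracketed integer class is divisible by~$2$: using $\rho_2(p_2(V_\R)) = w_4(V_\R)^2$, the stable agreement of $\eta_\R$ with $TM$ on $M^{(7)}$ (which determines $w_4$ on all of $M$) gives $\rho_2(p_2(\eta_\R)) = w_4(M)^2 = \rho_2(c_2(\eta)^2)$, while $\rho_2(c_1(\eta) c_3(\eta)) = 0$ by the previous step. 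The task thus reduces to identifying the mod~$2$ reduction of this halving modulo $\Sq^2\rho_2(H^6(M))$.

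Third, I would invoke the differential Riemann-Roch theorem on a suitable closed spin$^c$ $10$-manifold $W$ extending $(M, c)$ together with a complex extension $\wt\eta$ of $\eta$ over $W$: integrality of the twisted spin$^c$ Dirac index $\langle \mathrm{ch}(\wt\eta)\cdot e^{\wt c / 2}\cdot \hat A(W),\,[W]\rangle \in \Z$ produces an integer identity among degree-$10$ characteristic numbers. Expanding this identity and isolating the contribution restricted to the part of $W$ that carries the $M$-data yields exactly the required congruence modulo $\Sq^2\rho_2(H^6(M))$; cross-terms involving $c \cdot w_6(M)$ are absorbed by Lemma~\ref{lem:sq}(d), and the ambiguity from halving by Lemma~\ref{lem:sq}(c). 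The main obstacle lies in this last step: arranging an appropriate auxiliary $10$-manifold (possibly after stabilizing $M$ by spin$^c$ surgeries to achieve null-bordism), tracking the degree-$10$ expansion of $\mathrm{ch}(\wt\eta) e^{\wt c/2}\hat A(W)$, and verifying that every error term lies in $\Sq^2\rho_2(H^6(M))$.
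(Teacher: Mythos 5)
Your first step, showing well-definedness of the class $[w_{8}(\eta) - \rho_{2}(c_{1}(\eta) c_{3}(\eta)/2)]$, is correct and matches the paper (cf.\ Remark~\ref{rem:c1c3}). But the argument has a genuine gap at the third step, which you yourself flag as the ``main obstacle.''

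The difficulty is that you propose to apply the differential Riemann-Roch theorem to a closed spin$^c$ $10$-manifold $W$ ``extending $(M,c)$,'' possibly obtained by some stabilization of $M$ to achieve null-bordism. A $9$-dimensional spin$^c$ manifold need not bound, and even if one modifies $M$ by surgeries to kill its bordism class, it is not clear the invariants $w_8(\eta)$, $\rho_2(c_1c_3/2)$, and the target quotient $\Sq^2\rho_2(H^6(M))$ behave controllably under such modifications. More fundamentally, the quantity to be controlled is a \emph{degree-$8$ cohomology class on a $9$-manifold}, not a single characteristic number; integrating over $[W]$ and ``isolating the $M$-contribution'' does not obviously produce the needed statement about a coset in $H^8(M;\Z/2)/\Sq^2\rho_2(H^6(M))$.

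The paper sidesteps both issues. It first reduces, via Lemma~\ref{lem:dm}, to showing $x \cdot (w_8(\eta) - \rho_2(c_1(\eta)c_3(\eta)/2)) = 0$ for every $x \in \mathcal{D}_M$; you never invoke this duality and so have no mechanism for turning a numerical integrality statement into a statement about the coset. Second, given $x \in \mathcal{D}_M$, the hypothesis $\boc(x) = 0$ gives an integral lift $\bar x \in H^1(M)$, and the paper takes the closed manifold $W = M \times S^1$ (which exists unconditionally, no bordism needed), pulls $\eta$ back, and twists by a line bundle $l_x$ with $c_1(l_x) = \pi_2^*(s)\cdot\pi_1^*(\bar x)$. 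Riemann-Roch on $M \times S^1$ then says the number
\[
C = \bigl\langle \wh A(M \times S^1) \, e^{-\pi_1^*c_1(\eta)/2}\,[ch(l_x)-1]\,[ch(\pi_1^*\eta)-ch(\pi_1^*l_\eta)-\dim\eta+1],\,[M\times S^1]\bigr\rangle
\]
is an integer, which after expansion equals $\langle \bar x \cdot \wh A(M,\eta),[M]\rangle = -\langle \bar x (\tfrac{c_1c_3}{12}+\tfrac{c_4}{6}),[M]\rangle$, whence $\langle \bar x (\tfrac{1}{2}c_1c_3 + c_4),[M]\rangle = -6C$ is even. Reducing mod~$2$ and running over all $x \in \mathcal{D}_M$ completes the proof. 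Your Pontrjagin-class rewrite of $c_4 - c_1c_3/2$ in the second step, while algebraically correct, does not actually connect to the integrality input; the relevant combination $\tfrac{c_1c_3}{12}+\tfrac{c_4}{6}$ comes out of the Chern-character expansion directly, so the detour through $p_2(\eta_\R)$ is not needed.
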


\begin{remark} \label{rem:c1c3}
Since $\eta|_{M^{(7)}}$ is stably isomorphic to a stable complex structure of $TM|_{ M^{(7)} }$, 
it follows that $\rho_{2}(c_{1}(\eta)) = w_{2}(M)$ and $\rho_{2}(c_{3}(\eta)) = w_{6}(M)$.
Hence, by Proposition \ref{prop:sw}, it make sense to divide $c_{1}(\eta)c_{3}(\eta)$ by $2$ and 
our statement makes sense by Lemma \ref{lem:sq}(c).
\end{remark}

\begin{proof}[Proof of Theorem \ref{thm:betad}]
One direction can be deduced easily from Propositions \ref{prop:8obs} and \ref{prop:w8eta}.
In the other direction, suppose that 
\begin{equation}\label{eq:w8m}
w_{8}(M) - \rho_{2}(cv/2) \in \Sq^{2} ( \rho_{2}(H^{6}(M)) )
\end{equation}
holds for some integral lifts $c \in H^{2}(M)$ and $v \in H^{6}(M)$ of $w_2(M)$ and $w_6(M)$ respectively.
Since $M$ is spin$^{c}$-able, 
$M$ admits a stable almost complex structure $\eta^{\prime}$ over $M^{(7)}$ by Lemma \ref{lem:M7}.
Since $\pi_{7}(BU) = 0$, it follows that $\eta^{\prime}$ can be extended to $\mdot$. 
Moreover, it follows from Lemma \ref{lem:8ext9} that $\eta^{\prime}$ can be extended to $M$, which will be denoted by $\eta$. 
That is, $\eta$ is a complex vector bundle over $M$ such that $\eta|_{M^{(7)}}$ is a stable complex structure of $TM|_{ M^{(7)} }$.
Therefore,
\begin{equation}\label{eq:w8eta}
w_{8}(\eta) - \rho_{2}(c_{1}(\eta) c_{3}(\eta)/2) \in \Sq^{2} ( \rho_{2}(H^{6}(M)) )
\end{equation}
by Proposition \ref{prop:w8eta}.
Note that $\rho_{2}(c_{1}(\eta) c_{3}(\eta)/2) - \rho_{2}(cv/2) \in \Sq^{2} ( \rho_{2}(H^{6}(M)) )$  
Lemma \ref{lem:sq}(a)(c)(e).
Hence,
\[ w_{8}(\eta) -  w_{8}(M) \in \Sq^{2} ( \rho_{2}(H^{6}(M)) ) \]
by \eqref{eq:w8m} and \eqref{eq:w8eta}.
Thus, by Proposition \ref{prop:8obs}, $M$ admits a stable complex structure over $\mdot$.
\end{proof}

\begin{proof}[Proof of Proposition \ref{prop:8obs}]
One direction of the proof is trivial by Lemma \ref{lem:8ext9}.
Now we assume that $\eta$ is a complex vector bundle over $M$ such that $\eta|_{M^{(7)}}$ is stably isomorphic to a stable complex structure of $TM|_{ M^{ (7) } }$ and
\[
w_{8}(\eta) - w_{8}(M) \in \Sq^{2} ( \rho_{2}(H^{6}(M)) ).
\]
Let $\zeta = \eta_{\R} {-} TM$. 
Then it is suffices to show that there exists a complex vector bundle over $M$ such that the underlying real vector bundle is isomorphic to $\zeta$ in the stable range. 
Since $\eta|_{M^{(7)}}$ is stably isomorphic to a stable complex structure of $TM|_{ M^{(7)} }$, it follows that $\zeta$ is stably trivial over $M^{(7)}$, hence is a spin vector bundle with 
$q_{1}(\zeta) = 0.$
Then it follows from  Li and Duan \cite[Theorem 1]{ld91} that there must exists an element $v \in H^{8}(M)$ such that 
\[
q_{2}(\zeta) = 3v.
\]
Therefore,   
\[
\rho_{2} (v) = \rho_{2} (q_{2}(\zeta)) = w_{8}(\zeta) = w_{8}(\eta) {-} w_{8}(M) \in \Sq^{2} ( \rho_{2} (H^{6}(M)) )
\]
by Equation \eqref{eq:q2} and our assumption.
Thus, according to Adams \cite{ad61}, there must exists a complex vector bundle $\gamma$ over $M$, trivial over $M^{(5)}$, such that 
\begin{align*}
c_{3}(\gamma) &= 2u,\\
c_{4}(\gamma) &= 3v = q_{2}(\zeta),
\end{align*}
for some $u \in H^{6}(M)$ satisfying $\Sq^{2} ( \rho_{2} (u) ) = \rho_{2} (v)$. 
Since $\pi_{i}(BO) = 0$ for $i=6,7$, it follows that $\gamma_{\R}$ must be stably trivial over $M^{(7)}$ and moreover
\[
q_{2}(\gamma_{\R}) = c_{4}(\gamma) = q_{2}(\zeta)
\]
by Equation \eqref{eq:cq}.
Note that both $\zeta$ and $\gamma_{\R}$ are stably trivial over $M^{(7)}$. Hence 
$\zeta - \gamma_{\R}$ is stably trivial over $M^{(7)}$ with 
\[
q_{2}(\zeta - \gamma_{\R}) = q_2(\zeta) - q_{2}(\gamma_{\R})  = 0
\]
by Thomas \cite[(1.10)]{th62}. 
Thus, it follows by a result of the second author \cite[Proposition 2.2]{ya18} that $\zeta - \gamma_{\R}$ admits a stable complex structure over $\mdot$; hence the same holds for $\zeta$ and $TM$. 
\end{proof}

\begin{proof}[Proof of Proposition \ref{prop:w8eta}]
According to Lemma \ref{lem:dm}, it suffices to prove that
\begin{equation*}
x \cdot (w_{8}(\eta) {-} \rho_{2}(c_1(\eta) c_2(\eta)/2)) = 0,
\end{equation*}
for any $x \in \mathcal{D}_{M}$. 
Let 
$\wh A(M)$ be the $A$-class of $M$ (cf.\ Atiyah and Hirzebruch \cite{ah59}); 
more precisely we have
\begin{equation}\label{eq:Aclass}
\wh A(M) = 1 - \frac{p_{1}(M)}{24} + \frac{-4p_{2}(M) + 7p_{1}^{2}(M)}{5760}.
\end{equation}
Denote by $l_{\eta}$ the complex line bundle over $M$ with 
$c_{1}(l_{\eta}) = c_{1}(\eta)$.
We will set 
\begin{equation*}
\wh A(M, \eta) = \wh A(M) \cdot e^{-c_{1}(\eta)/2} \cdot [ch(\eta) - ch(l_{\eta}) - \dim\eta + 1].
\end{equation*}
Here $ch$ denotes the Chern character.
Since $\eta|_{M^{(7)}}$ is stably isomorphic to a stable complex structure of $TM|_{ M^{(7)} }$, it follows that 
\begin{align}
w_{2}(M) & =  \rho_{2}(c_{1}(\eta)),  \label{eq:w2}\\
p_{1}(M)   & = c_{1}^{2}(\eta) - 2c_{2}(\eta). \label{eq:p1}
\end{align}
Therefore, it follows from Equation \eqref{eq:Aclass} and
\begin{align*}
e^{-c_{1}(\eta)/2}  = & ~ 1 - \frac{c_{1}(\eta)}{ 2} + \frac{c_{1}^{2}(\eta)}{8} -  \frac{c_{1}^{3}(\eta)}{48} + \frac{c_{1}^{4}(\eta)}{384}, \\
ch(\eta)  = &  ~ \dim\eta + c_{1}(\eta) + \frac{c_{1}^{2}(\eta) - 2c_{2}(\eta)}{2} + \frac{c_{1}^{3}(\eta) - 3c_{1}(\eta)c_{2}(\eta) + 3c_{3}(\eta)}{6} \\
 & ~ + \frac{c_{1}^{4}(\eta) - 4c_{1}^{2}(\eta)c_{2}(\eta) + 2c_{2}^{2}(\eta) + 4c_{1}(\eta)c_{3}(\eta) - 4c_{4}(\eta)}{24}, \\
ch(l_{\eta})  = & ~ 1 + c_{1}(\eta) + \frac{c_{1}^{2}(\eta)}{2} + \frac{c_{1}^{3}(\eta)}{6} + \frac{c_{1}^{4}(\eta)}{24},
\end{align*}
that for any $\bar x \in H^{1}(M)$, 
\begin{align*}
  & ~ \langle \bar x \cdot \wh A(M, \eta), [M] \rangle  \\
=   & ~ \langle \bar x \cdot [ \frac{c_{2}(\eta)p_{1}(M)}{24} - \frac{c_{1}^{2}(\eta)c_{2}(\eta)}{24} + \frac{c_{2}^{2}(\eta)}{12} - \frac{c_{1}(\eta) c_{3}(\eta)}{12} - \frac{c_{4}(\eta)}{6}], [M] \rangle.
\end{align*}
Hence,
\begin{equation} \label{eq:Agenus} 
 \langle \bar x \cdot \wh A(M, \eta), [M] \rangle 
=  - \langle \bar x \cdot [  \frac{c_{1}(\eta) c_{3}(\eta)}{12} + \frac{c_{4}(\eta)}{6}], [M] \rangle,
\end{equation}
by Equation \eqref{eq:p1}.

Now, consider the Cartesian product $M {\times} S^{1}$ of $M$ and the circle $S^{1}$.
Denote by 
$\pi_{1}\colon M \times S^{1} \rightarrow M$
and 
$\pi_{2}\colon M \times S^{1} \rightarrow S^{1}$
the projections to each factor respectively. 
Let $s \in H^{1}(S^{1})$ be a generator.
For any $x \in \mathcal{D}_{M}$,
it follows from $\boc (x) = 0$ that there exists $\bar x \in H^{1}(M)$ such that $\rho_{2}(\bar x) = x$. 
Denote by $l_{x}$ the complex line bundle over $M {\times} S^{1}$ with 
\[
c_{1}(l_{x}) = \pi_{2}^{\ast} (s) \cdot \pi_{1}^{\ast} (\bar x).
\]
Obviously,
\begin{equation} \label{eq:chlx}
ch(l_{x}) = 1 + c_{1}(l_{x}) = 1 + \pi_{2}^{\ast} (s) \cdot \pi_{1}^{\ast} (\bar x).
\end{equation}
Since $S^{1}$ is parallelizable, 
it follows that
\begin{align} 
\wh A(M \times S^{1})  & = \pi_{1}^{\ast} ( \wh A(M) ), \label{eq:ams} \\
w_{2}(M\times S^{1}) & = \pi_{1}^{\ast} (w_{2}(M)) = \rho_{2} (\pi_{1}^{\ast} (c_{1}(\eta))).  \label{eq:w2s}
\end{align} 
Therefore, it follows from Equations \eqref{eq:chlx} and \eqref{eq:ams} that
\begin{align}
C = & ~\langle \wh A(M\times S^{1}) \cdot e^{-\pi_{1}^{\ast}c_{1}(\eta)/2} \cdot [ch(l_{x}) {-} 1] \cdot [ch(\pi_{1}^{\ast}\eta)  {-} ch(\pi_{1}^{\ast} (l_{\eta})) {-} \dim\eta + 1], [M\times S^{1}]\rangle \notag \\
= &~ \langle \pi_{2}^{\ast} (s) \cdot \pi_{1}^{\ast} ( \bar x \cdot \wh A(M, \eta) ), [M \times S^{1}] \rangle \notag \\
= & ~ \langle \bar x \cdot \wh A(M, \eta), [M] \rangle.  \label{eq:A}
\end{align}
Recall that $\eta$, $l_{\eta}$ and $l_{x}$ are complex vector bundles. Then Equation \eqref{eq:w2s} and the differential Riemann-Roch theorem (cf.\ Atiyah and Hirzebruch \cite[Corollary 1]{ah59}) implies that the number $C$ is an integer.
Hence, 
\[
\langle \bar x \cdot (\frac{1}{2}c_{1}(\eta)c_{3}(\eta) + c_{4}(\eta)), [M] \rangle = - ~ 6C
\]
is an even integer by Equations \eqref{eq:Agenus} and \eqref{eq:A}. 
Thus, note that it make sense to divide $c_{1}(\eta)c_{3}(\eta)$ by $2$,
\[
x \cdot [\rho_{2}(\frac{c_{1}(\eta) c_{3}(\eta)}{2}) + w_{8}(\eta)] = \rho_{2} \left( \bar x \cdot (\frac{1}{2}c_{1}(\eta)c_{3}(\eta) + c_{4}(\eta)) \right) = 0.
\]
Then, since this identity holds for any $x \in \mathcal{D}_{M}$, it follows that
\[ w_{8}(\eta) - \rho_{2}(c_{1}(\eta) c_{3}(\eta)/2) \in \Sq^{2} ( \rho_{2} (H^{6}(M)) )\]
by Lemma \ref{lem:dm}.
\end{proof}

\subsection{The spinnable case}
In this subsection we assume that $M$ is a spinnable $9$-manifold. 
%
%
Since $w_{2}(M) = 0$, we have
$\Sq^{2} ( \rho_{2} (H^{6}(M)) )= 0$ by Lemma \ref{lem:sq}$(a)$. 
Hence from Lemma \ref{lem:obs} we immediately obtain

\begin{lemma}\label{lem:sq2}
The obstruction $\mathfrak{o}_{8}(h) \in H^{8}(M; \Z/2)$ does not depend on the choice of $h$. \qed
\end{lemma}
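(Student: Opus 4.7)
The plan is to combine Lemma \ref{lem:obs}(c) with the vanishing of $\Sq^{2}$ on $\rho_{2}(H^{6}(M))$ that follows from the spinnability of $M$. More precisely, Lemma \ref{lem:obs}(c) tells us that as $h$ varies over stable complex structures on $TM|_{M^{(6)}}$ (or more carefully, over lifts coming from the $7$-skeleton), the obstruction $\mathfrak{o}_{8}(h)$ ranges over a single coset of the indeterminacy subgroup $\Sq^{2}(\rho_{2}(H^{6}(M))) \subseteq H^{8}(M;\Z/2)$.

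First I would invoke Lemma \ref{lem:sq}(a) with the hypothesis $w_{2}(M) = 0$: for any $y \in H^{6}(M;\Z/2)$ one has $\Sq^{2}(y) = w_{2}(M)\cdot y = 0$. In particular this applies to every $y$ in the image of $\rho_{2}\colon H^{6}(M) \to H^{6}(M;\Z/2)$, so
\[
\Sq^{2}(\rho_{2}(H^{6}(M))) = 0 \subseteq H^{8}(M;\Z/2).
\]
Hence the indeterminacy coset given by Lemma \ref{lem:obs}(c) is the trivial subgroup, so the coset consists of a single element. This single element is $\mathfrak{o}_{8}(h)$ for every admissible $h$, which is precisely the assertion that $\mathfrak{o}_{8}(h)$ does not depend on the choice of $h$.

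There is essentially no obstacle here; the content of the lemma is the translation of the indeterminacy statement in Lemma \ref{lem:obs}(c) through the Wu-formula computation already packaged in Lemma \ref{lem:sq}(a). The proof is therefore a one-line combination of these two previously established facts, with the hypothesis $w_{2}(M) = 0$ ensuring that the Wu class $v_{2}(M)$ responsible for $\Sq^{2}$ on $H^{6}(M;\Z/2)$ vanishes.
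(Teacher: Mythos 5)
Your proof is correct and follows exactly the same route as the paper: combine Lemma~\ref{lem:obs}(c) (the obstruction is a coset of $\Sq^{2}(\rho_{2}(H^{6}(M)))$) with Lemma~\ref{lem:sq}(a) and the hypothesis $w_{2}(M)=0$ to conclude that the indeterminacy group vanishes.
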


%
%

\noindent
Lemma \ref{lem:sq2} shows that $\mathfrak{o}_8(M)$ is a singleton 
and the main result of the subsection is the following

\begin{theorem}\label{thm:spin}
We have $\mathfrak{o}_{8}(M) = w_{8}(M)$.  Hence
$M$ admits a stable complex structure over $\mdot$ if and only if 
$w_{8}(M) = 0.$
\end{theorem}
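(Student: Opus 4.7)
The plan is to prove the cohomological identity $\mathfrak{o}_8(M) = w_8(M) \in H^8(M; \Z/2)$ via a spin bordism calculation, after which the second assertion follows immediately from Theorem~\ref{thm:Omega}.

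First I would observe that since $M$ is spinnable, $w_2(M) = 0$ and Lemma~\ref{lem:sq}(a) yields $\Sq^2 \rho_2(H^6(M)) = 0$, so the indeterminacy of $\Omega$ is trivial on $M$. Taking $c = 0$ as an integral lift of $w_2(M)$, we have $p_0(M) = -q_1(TM)$, and Equation~\eqref{eq:o8} reads $\mathfrak{o}_8(M) = \Omega(-q_1(TM)) \in H^8(M; \Z/2)$. It therefore suffices to establish
\[
\Omega(-q_1(TM)) = w_8(M) \in H^8(M; \Z/2)
\]
for every closed spin $9$-manifold $M$.

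By Poincar\'e duality, this identity is equivalent to the assertion that for every $\alpha \in H^1(M; \Z/2)$,
\[
\langle \alpha \cdot \Omega(-q_1(TM)), [M] \rangle = \langle \alpha \cdot w_8(M), [M] \rangle \in \Z/2.
\]
Classifying $\alpha$ by a map $f \colon M \to \R P^\infty$ turns each side into a homomorphism $\Omega_9^{Spin}(\R P^\infty) \to \Z/2$. So the problem reduces to showing these two functionals on $\Omega_9^{Spin}(\R P^\infty)$ are equal, which in turn can be checked on a chosen set of bordism generators.

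To produce and analyse those generators, I would run the Atiyah-Hirzebruch spectral sequence for $\Omega_*^{Spin}(\R P^\infty)$. Only finitely many bidegrees on the $E^2$-page contribute in total degree $9$, and the main input is the $d_3$-differential computed by Proposition~\ref{prop:AHSS-d3}; this is precisely the tool that converts the secondary operation $\Omega$ into an accessible spin-bordism invariant. Once $E^\infty$ in total degree $9$ is identified, I would evaluate both $\alpha$-twisted pairings on explicit geometric representatives (products such as $S^1 \times \mathbb{H}P^2$, circle bundles and Bott-type examples) and check agreement term by term.

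The main obstacle is controlling the $d_3$-differential; because $\Omega$ is a secondary cohomology operation, its bordism-theoretic content is naturally encoded by this differential, and it is only after Proposition~\ref{prop:AHSS-d3} is established that the evaluations on both sides can be cleanly compared. Once the spectral sequence computation is complete, matching the two invariants on generators is routine, yielding $\mathfrak{o}_8(M) = w_8(M)$, and then Theorem~\ref{thm:Omega} gives the contact-theoretic conclusion that $M$ admits a stable complex structure over $M^\circ$ if and only if $w_8(M) = 0$.
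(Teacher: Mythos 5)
Your overall framework---reduce to a question about homomorphisms on $\Omega^{Spin}_9(\R P^\infty)$ and settle it via the AHSS with Proposition~\ref{prop:AHSS-d3} as the key input---is the same scaffolding the paper uses, so you have identified the right battleground. However, there are two genuine gaps that leave the core of the argument missing.

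First, the assertion that ``each side'' of the pairing gives a homomorphism $\Omega^{Spin}_9(\R P^\infty) \to \Z/2$ is not automatic for the $\mathfrak{o}_8$ (equivalently $\Omega$) side. That $[M,f]\mapsto \langle f^*(u)\cdot\mathfrak{o}_8(M),[M]\rangle$ is a well-defined spin bordism invariant requires proof; the paper devotes Lemma~\ref{lem:o8spin} to exactly this, and the argument is nontrivial because a nullbordism $W$ may have $W_7(W)\neq 0$ (so $\mathfrak{o}_8$ need not be defined over $W$), which the paper handles by excising a tubular neighbourhood of a $3$-manifold Poincar\'e dual to a relative lift of $W_7(W)$. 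Without some version of this step your reduction to bordism generators is not justified.

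Second, and more importantly, the claim that you can ``evaluate both $\alpha$-twisted pairings on explicit geometric representatives\ldots and check agreement term by term'' is precisely the hard part and cannot be done head-on: $\mathfrak{o}_8$ is a secondary obstruction and there is no readily available formula for it on arbitrary spin $9$-manifolds. The paper avoids this entirely. Its real mechanism is: (i) Lemma~\ref{lem:spin}, which shows that if $x = \rho_2(\bar x)$ lifts to $\bar x \in H^1(M)$, then a transversality argument reduces the pairing to an $8$-manifold $N$, where Heaps' theorem identifies $\mathfrak{o}_8(N)$ with $w_8(N)$; and (ii) Lemma~\ref{lem:sur}, the surjectivity of $i_*\colon\Omega^{Spin}_9(S^1)\to\Omega^{Spin}_9(\R P^\infty)$, which (together with Kreck's representation result that spin bordism classes over $S^1$ may be taken with $\pi_1 \cong \Z$) shows that integral lifts always exist after changing the representative. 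Proposition~\ref{prop:AHSS-d3} is used to prove this surjectivity by killing $E^\infty_{p,9-p}$ for $p \geq 2$; it does not ``convert the secondary operation into an accessible bordism invariant'' as your proposal suggests. You should incorporate both the transversality/Heaps step and the $i_*$-surjectivity argument; as written, the final step of your proposal would fail because you have no means to compute $\mathfrak{o}_8$ on your chosen generators.
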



\noindent
We outline the strategy for the proof of Theorem \ref{thm:spin} 
and then proceed to proofs of the required lemmas.

\begin{proof}[Proof of Theorem \ref{thm:spin} modulo Lemmas \ref{lem:spin}, \ref{lem:o8spin} and \ref{lem:sur}]
Recall that $\R P^{\infty}$ denotes infinite real projective space.
We consider a class $x \in H^8(M; \Z/2)$, which we
regard as a map $x \colon M \to \R P^\infty$
and we identify $x = x^*(u)$ where $u \in H^1(\R P^\infty; \Z/2)$ is the generator.
We must show that 
\[ x \cdot w_8(M) = x \cdot \mathfrak{o}_8(M) \]
for all $x \in H^8(M; \Z/2)$.
By Lemma \ref{lem:spin}, this holds if $x = \rho_2(\bar x)$ is the reduction of an integral
class $\bar x \in H^1(M)$.
To reduce the case of general $x$ to the integral case, we chose a spin structure
on $M$ and use computations in 9-dimensional spin bordism.
 
The pair $(M, x)$ defines an element of the singular spin bordism group 
$\Omega^{Spin}_9(\R P^\infty)$ and we consider the homomorphism
\[
\cdot w_{8} \colon \Omega_{9}^{Spin}(\R P^{\infty}) \to \Z/2,
\quad [M, x] \mapsto \an{x \cdot w_8(M), [M]}
\] 
We also consider the assignment 
\[
\cdot \mathfrak{o}_{8} \colon \Omega_{9}^{Spin}(\R P^{\infty}) \rightarrow \Z/2,
\quad [M, x] \mapsto \an{x \cdot \mathfrak{o}_{8}(M), [M]}.
\]
and in Lemma \ref{lem:o8spin} we prove that $\cdot \mathfrak{o}_{8}$ is a well defined homomorphism.
If $i \colon S^1 \to \R P^\infty$ denotes the inclusion of $S^1 = \R P^1$ then we have
the induced homomorphism $i_{\ast} \colon \Omega_{9}^{Spin}(S^{1}) \to \Omega_{9}^{Spin}(\R P^{\infty})$ 
as well as the compositions
$
\cdot \mathfrak{o}_{8} \circ i_{\ast} \colon \Omega_{9}^{Spin}(S^{1}) \to \Omega_{9}^{Spin}(\R P^{\infty}) \to \Z/2
$
and
$
\cdot w_{8} \circ i_{\ast} \colon \Omega_{9}^{Spin}(S^{1}) \to \Omega_{9}^{Spin}(\R P^{\infty}) \to \Z/2.
$
By \cite[Proposition 4]{k99} every element in $\Omega_{9}^{Spin}(S^{1})$ can be represented by 
$(N, x)$ with $\pi_{1}(N) \cong \Z$.  Now by Lemma \ref{lem:spin} we have
\[
\cdot \mathfrak{o}_{8} \circ i_{\ast} = \cdot w_{8} \circ i_{\ast}.
\]
In Lemma \ref{lem:sur} we prove that $i_*$ is onto
and so
$\mathfrak{o}_{8}(M) = w_{8}(M)$ for any spin manifold $M$.
\end{proof}

\begin{lemma}\label{lem:spin}
If $x = \rho_2(\bar x)$ for $\bar x \in H^1(M)$, then $x \cdot w_8 = x \cdot \mathfrak{o}_8$.
\end{lemma}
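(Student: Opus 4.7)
The plan is to reduce this identity to an $8$-dimensional statement by taking a transverse slice.  Represent $\bar x\in H^1(M)\cong[M,S^1]$ by a smooth map $f\colon M\to S^1$, perturbed to be transverse to a regular value $p\in S^1$, and set $N:=f^{-1}(p)\subset M$.  Then $N$ is a closed oriented $8$-manifold with trivial normal line bundle, the spin structure on $M$ restricts to a spin structure on $N$, and $[N]\in H_8(M)$ is Poincar\'e dual to $\bar x$.  In particular, for every $y\in H^8(M;\Z/2)$,
\[
\an{\rho_2(\bar x)\cdot y,\,[M]} = \an{y|_N,\,[N]}.
\]

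Applying this identity with $y=w_8(M)$, and using $TM|_N\cong TN\oplus\underline{\R}$ together with the stability of Stiefel-Whitney classes, gives $\an{x\cdot w_8(M),[M]}=\an{w_8(N),[N]}$.  Applying it with $y=\mathfrak{o}_8(M)$, and using the naturality of the obstruction class under the same stable equivalence, gives $\an{x\cdot\mathfrak{o}_8(M),[M]}=\an{\mathfrak{o}_8(N),[N]}$, where $\mathfrak{o}_8(N)\in H^8(N;\Z/2)\cong\Z/2$ denotes the unique top obstruction for the closed spin $8$-manifold $N$ to admit a stable complex structure.  Hence the lemma reduces to proving $\mathfrak{o}_8(N)=w_8(N)$ for every closed spin $8$-manifold $N$.

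For this $8$-dimensional identity I would apply Thomas's formula (the analogue of \eqref{eq:o8}) with $c=0$, so that $p_c(N)=-q_1(N)$.  Since $w_2(N)=0$ forces $\Sq^2(\rho_2 H^6(N))=0$ by Lemma~\ref{lem:sq}(a), and since $w_8(N)=w_4(N)^2$ by Equation~\eqref{eq:w8}, the formula collapses to $\mathfrak{o}_8(N)=\Omega(-q_1(N))$.  The equality to prove becomes $\Omega(-q_1(N))=w_4(N)^2$ in $H^8(N;\Z/2)$.  Both sides, paired with $[N]$, define homomorphisms $\Omega_8^{Spin}\to\Z/2$, so it suffices to verify agreement on a generating set of $\Omega_8^{Spin}\otimes\Z/2$: the class of $\Ha P^2$, which is spin, is not stably complex, and has $w_4^2\neq 0$, together with a $K3$ surface, which is spin, is complex, and has $w_4=0$, cover such a generating set and give agreement on both sides.

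The main obstacle is this last step: rigorously confirming that $\an{\Omega(-q_1(N)),[N]}$ descends to a spin bordism invariant, which requires a careful analysis of the indeterminacy of Thomas's secondary operation over the universal spin $8$-manifold, and then completing the verification on a full set of generators of $\Omega_8^{Spin}\otimes\Z/2$ via the Anderson-Brown-Peterson description of $\Omega_\ast^{Spin}$.  Once the $8$-dimensional identity $\mathfrak{o}_8(N)=w_8(N)$ is secured, combining it with the Poincar\'e duality reduction from the first two paragraphs completes the proof of the lemma.
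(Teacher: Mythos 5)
Your first paragraph reproduces the paper's argument precisely: take $\bar x$ as a map $M\to S^1$ transverse to a point, let $N=\bar x^{-1}(\ast)$, use the trivialised normal bundle to give $N$ a spin structure, and observe that Poincar\'e duality plus the stability of both $w_8$ and $\mathfrak{o}_8$ reduce the lemma to the identity $\mathfrak{o}_8(N)=w_8(N)$ for the closed spin $8$-manifold $N$.

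The divergence is in how that $8$-dimensional identity is handled. The paper simply cites Heaps \cite[Corollary 1.3]{he70}, which is exactly this statement, and the proof is complete in one line. You instead try to re-derive Heaps' result from Thomas's formula plus a spin-bordism-invariance argument and a check on generators. This route has two real problems. First, you yourself flag the central gap: you do not establish that $\langle\Omega(-q_1(N)),[N]\rangle$ is a spin bordism invariant, and this is not a small detail --- the paper handles the analogous $9$-dimensional bordism-invariance issue separately and with some care in Lemma~\ref{lem:o8spin}, where one must contend with the possibility $W_7(W)\neq 0$ for the bounding $10$-manifold; an $8$-dimensional analogue would require a similar argument, not just the remark that the indeterminacy $\Sq^2(\rho_2 H^6(N))$ vanishes when $w_2(N)=0$. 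Second, the verification on generators is off: $K3$ is a $4$-manifold and is not a generator of $\Omega_8^{Spin}\otimes\Z/2\cong(\Z/2)^2$; a correct generating set needs two genuine $8$-dimensional spin manifolds (e.g.\ $\Ha P^2$ together with a Bott manifold, or $\Ha P^2$ together with $K3\times K3$ after accounting for decomposables). So while your reduction step is exactly the paper's, the remaining portion of your argument is both incomplete and partly incorrect, and in effect amounts to an attempted re-proof of the very result the paper invokes from Heaps.
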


\begin{proof}
Represent $\bar x$ as a map $\bar x \colon M \to S^1$, which we may assume is transverse to a point 
$\ast \in S^1$.
Then $N : = \bar x^{-1}(\ast) \subset M$ is a closed smooth $8$-manifold with trivial $1$-dimensional normal bundle
and so inherits a spin structure from $M$, once we trivialise the normal bundle.
Since the obstruction $\mathfrak{o}_8$ is natural for codimension-$0$-inclusions of submanifolds, we have
$$ x \cdot w_8(M) = \bar x \cdot w_8(M) = \an{w_8(N), [N]} = \an{\mathfrak{o}_8(N), [N]} = 
\bar x \cdot \mathfrak{o}_8(M) = x \cdot \mathfrak{o}_8(M) ,$$
where the third equality holds since $\mathfrak{o}_8 = w_8$ for the spin manifold $N$ by Heaps \cite[Corollary 1.3]{he70}.
\end{proof}

\begin{lemma}\label{lem:o8spin}
Let $W$ be a spinnable $10$-manifold with boundary $\partial W = M$ and $i_{M} \colon M \rightarrow W$ be the inclusion.
For any map $F \colon W \rightarrow \R P^{\infty}$ let $f := F \circ i_{M}$. 
For $u \in H^{1}(\R P^{\infty}; \Z/2)$ the generator, we have
\[
\langle f^{\ast}(u) \cdot \mathfrak{o}_{8}(M), [M] \rangle = 0.
\]
\end{lemma}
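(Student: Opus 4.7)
The plan is to construct a class $\tilde{\mathfrak{o}} \in H^8(W; \Z/2)$ extending $\mathfrak{o}_8(M)$ in the sense that $i_M^{\ast}(\tilde{\mathfrak{o}}) = \mathfrak{o}_8(M)$. Once such an extension exists, the lemma is immediate: since $[M] = \partial_{\ast}[W, M]$ in the long exact homology sequence of the pair $(W,M)$, one has $i_{M,\ast}[M] = 0$ in $H_9(W; \Z/2)$, and therefore
\[\langle f^{\ast}(u) \cdot \mathfrak{o}_8(M), [M]\rangle = \langle i_M^{\ast}(F^{\ast}(u) \cdot \tilde{\mathfrak{o}}), [M]\rangle = \langle F^{\ast}(u) \cdot \tilde{\mathfrak{o}}, i_{M,\ast}[M]\rangle = 0.\]

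To construct $\tilde{\mathfrak{o}}$, I will follow the approach of Proposition~\ref{prop:8obs}. Since $M$ is a spin $9$-manifold, Theorem~\ref{thm:W7} yields $W_7(M) = 0$, so by Lemma~\ref{lem:M7} the tangent bundle $TM$ admits a stable complex structure $h$ on $M^{(7)}$. Because $\pi_7(BU) = 0$, $h$ extends (as a complex vector bundle) to $\mdot$, and then Lemma~\ref{lem:8ext9} produces a complex vector bundle $\eta$ on $M$ whose restriction $\eta|_{M^{(7)}}$ is stably isomorphic to $h$. An obstruction-theoretic computation, using that $\Sq^{2} \rho_2 H^6(M) = 0$ for spin $M$ by Lemma~\ref{lem:sq}(a), then gives the sharp identity
\[\mathfrak{o}_8(M) = w_8(\eta) + w_8(M) \in H^8(M; \Z/2).\]

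The remaining step---and the main obstacle---is to extend the complex vector bundle $\eta$ from $\partial W = M$ to all of $W$. Once that is achieved, setting $\tilde{\mathfrak{o}} := w_8(\tilde{\eta}) + w_8(W) \in H^8(W; \Z/2)$ gives the desired class: using $TW|_M \cong TM \oplus \R$ and stability of $w_8$, we have $i_M^{\ast}(w_8(W)) = w_8(M)$, hence $i_M^{\ast}(\tilde{\mathfrak{o}}) = w_8(\eta) + w_8(M) = \mathfrak{o}_8(M)$. The obstructions to extending $\eta$ from $M$ to $W$ in the stable range live in $H^{2k}(W, M; \Z)$ for $k = 1, \ldots, 5$, since $\pi_{2k+1}(BU) = 0$ by Bott periodicity. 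I expect to kill these obstructions by exploiting the freedom in the choice of $\eta$ (different extensions of $h$ differ by complex vector bundles classified through the quotient $M/M^{(7)}$) and, if necessary, by replacing $W$ with a spin $10$-manifold bordant to $W$ relative to $M$ in which the relative Chern-class obstructions vanish simultaneously; carrying this out carefully is where the bulk of the technical work lies.
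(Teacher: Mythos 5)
Your general strategy---produce a class $\tilde{\mathfrak{o}} \in H^8(W; \Z/2)$ restricting to $\mathfrak{o}_8(M)$ and then appeal to $i_{M,\ast}[M] = 0$---is the right shape of argument, and it is also the shape of the first half of the paper's proof. But your route to the extension has a genuine gap, and it is exactly the step you flag as ``the main obstacle.'' Extending the complex vector bundle $\eta$ from $M=\partial W$ to all of $W$ meets obstructions in $H^{2k}(W,M;\Z)$ for $k=1,\dots,5$ (since $\pi_{2k-1}(BU)\cong\Z$), and there is no reason these vanish. Your proposed fixes do not obviously work either: the freedom in the choice of $\eta$ only moves $c_4(\eta)$ by multiples of $3!$ supported on the top skeleton of $M$, which does not help clear relative obstructions over $W$; and replacing $W$ by a spin manifold bordant rel $M$ changes $H^{\ast}(W,M)$ uncontrollably, so the claim that one can simultaneously kill all five relative Chern-class obstructions is not justified. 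In short, the proof as written does not close.

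The paper avoids the extension problem altogether. Rather than extending a particular complex vector bundle over $M$ to $W$, it observes that if $W_7(W)=0$ then $W$ itself admits a stable complex structure $h$ on $W^{(7)}$, so the obstruction $\mathfrak{o}_8(h) \in H^8(W;\Z/2)$ is defined \emph{for $W$'s tangent bundle}, and naturality of the obstruction under the boundary inclusion gives $i_M^{\ast}(\mathfrak{o}_8(h)) = \mathfrak{o}_8(M)$; the evaluation then vanishes as in your plan. When $W_7(W)\neq 0$, the paper notes $W_7(M) = i_M^{\ast}W_7(W) = 0$ (Theorem~\ref{thm:W7}), lifts $W_7(W)$ to a relative class, realizes its Poincar\'e--Lefschetz dual by an embedded closed oriented $3$-manifold $X \subset \mathrm{int}(W)$, and cuts: $W' := W - \mathrm{int}(N)$ and $N$ (a tubular neighborhood of $X$) are both spinnable with $W_7(W') = W_7(N) = 0$, so the first case applies to each piece and the contribution from $\partial N$ cancels. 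This geometric cutting argument is what the paper uses to replace the vector-bundle extension you are attempting; your proposal, even apart from the extension difficulty, has no analogue of the $W_7(W)\neq 0$ case.

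Also note that your ``sharp identity'' $\mathfrak{o}_8(M) = w_8(\eta) + w_8(M)$ is asserted but not derived; Proposition~\ref{prop:8obs} is stated as a vanishing criterion, not as a formula for the obstruction class, so this step would also need an argument.
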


\begin{proof}
If $W_{7}(W) = 0$, then $W$ admits a stable complex structure $h$ over its $7$-skeleton $W^{(7)}$ and 
the obstruction $\mathfrak{o}_{8}(h) \in H^{8}(W; \Z/2)$ is defined.  Since 
$i^{\ast}(\mathfrak{o}_{8}(h)) = \mathfrak{o}_{8}(M)$ we have
\begin{align*}
\langle  f^{\ast}(u) \cdot \mathfrak{o}_{8}(M), [M] \rangle 
= \langle i_{M}^{\ast}F^{\ast} (u) \cdot i_{M}^{\ast}(\mathfrak{o}_{8}(h)), [M] \rangle 
= \langle F^{\ast}(u) \cdot \mathfrak{o}_{8}(h), i_{M\ast}([M]) \rangle 
= 0,
\end{align*}
because $i_{M\ast}([M]) = 0$.

Now assume that $W_{7}(W) \neq 0$. Since $W_{7}(M) = i_{M}^{\ast}(W_{7}(W)) = 0$ by Theorem \ref{thm:W7}, $W_{7}(W)$ lifts to $\ol{W}_{7}(W) \in H^{7}(W, M)$. Denote by 
\[
PD \colon H^{7}(W, M) \rightarrow H_{3}(W)
\]
the Poincar\'e-Lefschetz 
isomorphism, and 
\[
T \colon \Omega_{3}^{SO}(W) \rightarrow H_{3}(W)
\] 
the Thom-homomorphism given by 
$T([X, g]) = g_{\ast}([X])$
for any $[X, g] \in \Omega_{3}^{SO}(W)$. 
Since $\Omega_1^{SO} = 0$, it follows from the Atiyah-Hirzebruch Spectral Sequence
for $\Omega^{SO}_*(W)$ that $T$ is surjective.
Hence, there is a closed orientable $3$-dimensional manifold $X$ with a map $g \colon X \rightarrow W$ such that 
\[
g_{\ast}([X]) = PD(\ol{W}_{7}(W)).
\]
By general position, $g$ may be chosen to be an embedding in the interior of $W$: 
$g(X) \subset W {-} M$.
Let $N$ be a tubular neiberghood of $g(X)$ and set
\[
W^{\prime} := W - \mathrm{int}(N) 
\] 
to be the space obtained from $W$ by removing the interior of $N$. Then $\partial W^{\prime} = M \coprod \partial N$. We denote by $i_{\partial N} \colon \partial N \to W$  the inclusion and set $f_{\partial N} := F \circ i_{\partial N}$ the composition map.

By construction, both $W^{\prime}$ and $N$ are spinnable and satisfy 
$W_{7}(W^{\prime}) = 0$ and $W_{7}(N) = 0$. Therefore,  it follows from the same argument in the case $W_{7}(W) = 0$ that
\[
\langle f^{\ast} (u) \cdot \mathfrak{o}_{8}(M), [M] \rangle + \langle f_{\partial N}^{\ast} (u) \cdot \mathfrak{o}_{8}(\partial N), [\partial N] \rangle  = 0,
\]
and 
\[
\langle f_{\partial N}^{\ast} (u) \cdot \mathfrak{o}_{8}(\partial N), [\partial N] \rangle = 0.
\]
Hence $\langle f^{\ast} (u) \cdot \mathfrak{o}_{8}(M), [M] \rangle = 0$.
\end{proof}

\begin{lemma}\label{lem:sur}
The induced homomorphism 
$
i_{\ast} \colon \Omega_{9}^{Spin}(S^{1}) \rightarrow \Omega_{9}^{Spin}(\R P^{\infty})
$
is surjective.
\end{lemma}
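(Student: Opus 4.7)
The plan is to attack Lemma~\ref{lem:sur} via the Atiyah--Hirzebruch spectral sequence (AHSS) for the spin bordism of $\R P^\infty$,
\[
E^2_{p,q}=H_p(\R P^\infty;\Omega_q^{Spin})\Longrightarrow\Omega^{Spin}_{p+q}(\R P^\infty).
\]
With the usual CW-structure on $\R P^\infty$, the $1$-skeleton is $S^1=\R P^1$, so the first filtration piece $F_1\,\Omega^{Spin}_9(\R P^\infty)$ equals $\Im(i_\ast)$. Consequently, surjectivity of $i_\ast$ is equivalent to the vanishing of $E^\infty_{p,\,9-p}$ for every $p\geq 2$.

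The first step is to list the potentially surviving $E^2$-terms on the diagonal $p+q=9$ with $p\geq 2$. Using $\Omega^{Spin}_q=0$ for $q\in\{3,5,6,7\}$ together with the standard low-dimensional computation of $H_*(\R P^\infty;A)$ for $A=\Z$ and $A=\Z/2$, only four cells can be nonzero, each a copy of $\Z/2$: namely $E^2_{9,0}$, $E^2_{8,1}$, $E^2_{7,2}$, and $E^2_{5,4}$.

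Next I would kill $E^2_{9,0}$, $E^2_{8,1}$, and $E^2_{7,2}$ by $d_2$-differentials. For the $MSpin$-AHSS, the $d_2$'s on the rows $q=0$ and $q=1$ are (dually) induced by $\Sq^2$, and since $\Sq^2(a^p)=\binom{p}{2}a^{p+2}$ in $H^*(\R P^\infty;\Z/2)=\Z/2[a]$, the parities of $\binom{p}{2}$ for $p=6,7$ show that $d_2\colon E^2_{9,0}\to E^2_{7,1}$, $d_2\colon E^2_{8,1}\to E^2_{6,2}$, and $d_2\colon E^2_{9,1}\to E^2_{7,2}$ are each nontrivial, hence each is an isomorphism between two groups of order $2$.

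The remaining term $E^2_{5,4}$ is where the main obstacle lies. A bigrading inspection rules out every differential touching this cell except $d_3\colon E^3_{8,2}\to E^3_{5,4}$: outgoing $d_r$'s from $(5,4)$ land either in zero groups or in the torsion-free $E^r_{0,8}\subseteq\Omega^{Spin}_8=\Z^2$, while the other incoming groups are either zero on the nose or already killed by the preceding $d_2$'s (for instance, $E^4_{9,1}=0$ since $E^3_{9,1}=0$). I would then invoke Proposition~\ref{prop:AHSS-d3} to identify this $d_3$ as a secondary cohomology operation whose value on the generator of $E^3_{8,2}$ supported by $\R P^8$ is nonzero, so that $d_3$ is an isomorphism $\Z/2\to\Z/2$. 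This is the hardest part of the argument: $d_3$ is a genuine secondary operation rather than a primary Steenrod square, and checking its non-vanishing on $\R P^\infty$ is exactly what Proposition~\ref{prop:AHSS-d3} is designed to furnish. With that in hand, $E^\infty_{p,\,9-p}=0$ for all $p\geq 2$, completing the proof that $i_\ast$ is surjective.
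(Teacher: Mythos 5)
Your strategy mirrors the paper's: set up the Atiyah--Hirzebruch spectral sequence for $\Omega^{Spin}_*(\R P^\infty)$, kill the contributions on the rows $q=0,1,2$ by $d_2$-differentials computed from $\Sq^2$, and then dispose of $E^3_{5,4}$ via the incoming $d_3$ from position $(8,2)$, invoking Proposition~\ref{prop:AHSS-d3}. That is exactly the paper's route.

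There is, however, a misconception in your description of the crucial last step, and it leaves a real gap. You characterise $d_3$ as ``a genuine secondary operation rather than a primary Steenrod square'' and say that ``checking its non-vanishing on $\R P^\infty$ is exactly what Proposition~\ref{prop:AHSS-d3} is designed to furnish.'' In fact the content of Proposition~\ref{prop:AHSS-d3} is the opposite of a secondary-operation statement: it identifies this $d_3$, after passage to the $E^3$-quotient, with the composite of two \emph{primary} operations, $\beta_{\Z/2}\circ Sq^2_*$. The proposition does not evaluate anything on $\R P^\infty$; that is a separate computation, which your outline omits. Concretely, one still has to check that $\beta_{\Z/2}\circ Sq^2_*\colon H_8(\R P^\infty;\Z/2)\to H_5(\R P^\infty)$ is nonzero. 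The paper does this via the Cartan formula, $Sq^2(u^6)=\left(Sq^1(u^3)\right)^2=u^8$, so $Sq^2_*$ is an isomorphism $H_8\to H_6$, while $\beta_{\Z/2}\colon H_6(\R P^\infty;\Z/2)\to H_5(\R P^\infty)\cong\Z/2$ is also an isomorphism. Without this explicit evaluation the argument is incomplete. One smaller omission: before you can use a nontrivial $d_3$ out of $(8,2)$ you must confirm that $E^3_{8,2}\cong\Z/2$, i.e.\ that the incoming $d_2\colon E^2_{10,1}\to E^2_{8,2}$ vanishes; it does, since $\binom{8}{2}$ is even, but this should be recorded alongside the other $d_2$-checks.
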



\begin{proof}
Consider the Atiyah-Hirzebtuch spectral sequence 
\[
\bigoplus_{p + q = n} H_{p}(\R P^{\infty};\Omega_{q}^{Spin}) \implies 
\Omega_{n}^{Spin}(\R P^{\infty}).
\]
In order to prove 
that $i_{\ast}$ is surjective we must show that $E_{p,9-p}^{\infty} := 
H_p(\R P^\infty; \Omega^{Spin}_q) = 0$ for $p \neq 0, 1$.
Recall that in low dimensions the groups $\Omega_{q}^{Spin}$ are given by the following table
(cf. Stong \cite[Item 2]{st86}):
\begin{table}[ht]
\centering
\begin{tabular}{c|*{9}{p{0.8cm}<{\centering}|}p{0.8cm}<{\centering}}
$q$ & 0 & 1  & 2 & 3 & 4 & 5 & 6 & 7 & 8 & 9 \\
\hline
$\Omega_{q}^{Spin}$ & $\Z$ & $\Z/2 $ & $\Z/2$  & $0$ & $\Z$ & $0$ & $0$ & $0$ & $\Z^{2}$ 
& $(\Z/2)^{2}$
\end{tabular}
\end{table}\\
Therefore, we only need to prove that $E_{p,9-p}^{\infty} = 0$ for $p = 5,~7,~8,~9$.

On the $E^{2}$ page, since the differential 
\begin{align*}
d_{2}\colon H_{p}(\R P^{\infty}; \Omega_{1}^{Spin}) \rightarrow H_{p-2}(\R P^{\infty}; \Omega_{2}^{Spin})
\end{align*}
is the dual of the Steenrod square 
\[
\Sq^{2}\colon H^{p-2}(\R P^{\infty}; \Z/2) \rightarrow H^{p}(\R P^{\infty}; \Z/2)
\]
and the differential 
\begin{align*}
d_{2}\colon H_{p}(\R P^{\infty}; \Omega_{0}^{Spin}) \rightarrow H_{p-2}(\R P^{\infty}; \Omega_{1}^{Spin})
\end{align*}
is $\bmod ~2$ reduction composed with the dual of $\Sq^{2}$
(cf. Teichner \cite[Lemma 2.3.2]{te92}), it follows that the differentials $d_{2}\colon E_{8, 1}^{2} \rightarrow E_{6, 2}^{2}$, $d_{2}\colon E_{9, 1}^{2} \rightarrow E_{7, 2}^{2}$ and $d_{2}\colon E_{9, 0}^{2} \rightarrow E_{7, 1}^{2}$ are all isomorphisms and
that $d_{2}\colon E_{10, 1}^{2} \rightarrow E_{8, 2}^{2}$ is trivial. Hence
\[
E_{7,2}^{\infty} = E_{7,2}^{3} = 0,~
E_{8,1}^{\infty} = E_{8,1}^{3} =0, ~
E_{9,0}^{\infty} = E_{9,0}^{3} =0, 
\]
and $E_{8,2}^{3} = E_{8, 2}^2 = \Z/2$.
\sseqentrysize=1.3cm
\sseqxstep=1
\sseqystep=1
\begin{center}
\begin{sseq}{5...10}{0...4}
\ssmoveto{5}{4} \ssdrop[color=blue]{\Z/2}  
\ssmoveto{6}{3} \ssdrop[color=blue]{0}
\ssmoveto{6}{2} \ssdrop{\Z/2} 
\ssmoveto{7}{2} \ssdrop[color=blue]{\Z/2} 
\ssmoveto{8}{2} \ssdrop{\Z/2} \ssarrow{-3}{2}
\ssmoveto{7}{1} \ssdrop{\Z/2}
\ssmoveto{8}{1} \ssdrop[color=blue]{\Z/2} \ssarrow{-2}{1}
\ssmoveto{9}{1} \ssdrop{\Z/2} \ssarrow{-2}{1}
\ssmoveto{10}{1} \ssdrop{\Z/2} \ssarrow{-2}{1}
\ssmoveto{9}{0} \ssdrop[color=blue]{\Z/2} \ssarrow{-2}{1}
\ssmoveto{10}{0} \ssdrop{0}
\end{sseq}
\end{center}

On the $E^{3}$ page, the differential 
$d_{3} \colon E_{8,2}^{3} \rightarrow E_{5,4}^{3}$ is given by $\beta_{\Z/2} \circ Sq^2_*$,
by Proposition \ref{prop:AHSS-d3} below.
If $u \in H^1(\R P^\infty; \Z/2)$ is the generator, then by the Cartan formula
$$ Sq^2(u^6) = \left(Sq^1(u^3) \right)^2 = u^8.$$
Hence $Sq^2_* \colon H_8(\R P^\infty; \Z/2) \to H_6(\R P^\infty; \Z/2)$ is an isomorphism.
Since the Bockstein homomorphism
$\beta_{\Z/2} \colon H_6(\R P^\infty; \Z/2) \to H_5(\R P^\infty)$ is also an isomorphism,
it follows that $d_{3} \colon E_{8,2}^{3} \rightarrow E_{5,4}^{3}$ is an isomorphism
and so
$E_{5,4}^{\infty} = E_{5,4}^{4} = 0$.
%

\end{proof}

We conclude this section with a general result on the $d_3$-differential from the $2$-line in the AHSS
for spin bordism.  This result was used in the proof of Lemma \ref{lem:sur} and is of
independent interest.

\begin{proposition} \label{prop:AHSS-d3}
Let the spin bordism AHSS for $X$,
$E^3_{i+3, 2} = H_{i+3}(X; \Z/2)/d_2(H_{i+5}(X; \Z/2))$ 
and the differential
$d_3 \colon E^3_{i+3, 2} \to E^3_{i, 4} = 
H_i(X)$ fits into the following commutative diagram
%
%
$$
\xymatrix{
H_{i+3}(X; \Z/2) \ar[d] \ar[r]^(0.475){Sq^2_*} &
H_{i+1}(X; \Z/2) \ar[r]^(0.575){\beta_{\Z/2}} &
H_i(X) \ar[d]^= \\
E^3_{i+3, 2} \ar[rr]^{d_3} & &
H_i(X),}$$
where we have identified $\Omega^{Spin}_1 = \Z/2 = \Omega^{Spin}_2$ and 
$\Omega_4^{Spin} \cong \Z$ 
and $\beta_{\Z/2}$ denotes the Bockstein associated to the short exact coefficient sequence
$\Z \xra{\times 2} \Z \to \Z/2$.
\end{proposition}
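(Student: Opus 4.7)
The plan is to identify $d_3$ by analysing the Postnikov tower of the spectrum $\mathbf{MSpin}$ in low degrees, using naturality in $X$ together with the $KO$-orientation to pin down the relevant $k$-invariant, and then reading off the differential from the standard dictionary between Postnikov $k$-invariants and AHSS differentials.

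Recall $\pi_0\mathbf{MSpin}=\Z$, $\pi_1\mathbf{MSpin}=\pi_2\mathbf{MSpin}=\Z/2$, $\pi_3\mathbf{MSpin}=0$, and $\pi_4\mathbf{MSpin}=\Z$. Let $P_n:=\mathbf{MSpin}_{\leq n}$ denote the $n$-th Postnikov section. Because $\pi_3\mathbf{MSpin}=0$, we have $P_3=P_2$, so the map $P_4\to P_2$ is a principal fibration with fiber $\Sigma^4 H\Z$, classified by a single $k$-invariant $\kappa\in H^5(P_2;\Z)$. The filtration of $\mathbf{MSpin}_*(X)$ by the Postnikov sections coincides with the filtration giving rise to the AHSS, and under this identification the $d_3$-differential on the 2-line is induced by postcomposition with $\kappa$. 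Concretely, the restriction $\kappa|_{\Sigma^2 H\Z/2}$ along the inclusion of the fiber of $P_2\to P_1$ lies in $H^5(\Sigma^2 H\Z/2;\Z)=H^3(H\Z/2;\Z)\cong\Z/2$, which is generated by the integral Steenrod cube $\beta_{\Z/2}\circ Sq^2$, and $\kappa|_{\Sigma^2 H\Z/2}$ is precisely the homology operation appearing in the claimed formula for $d_3$.

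To identify $\kappa|_{\Sigma^2 H\Z/2}$ as non-zero, I would invoke the $KO$-orientation $\hat A\colon \mathbf{MSpin}\to\mathbf{ko}$, which is an isomorphism on $\pi_i$ for $i\leq 7$ and therefore identifies the Postnikov sections of $\mathbf{MSpin}$ and $\mathbf{ko}$ along with their $k$-invariants through the relevant range. The corresponding $k$-invariant for $\mathbf{ko}$ (equivalently, the $d_3$-differential on the 2-line in the AHSS for connective real $K$-theory) is classically known to be $\beta_{\Z/2}\circ Sq^2$ by the original work of Atiyah and Hirzebruch. Transporting this along $\hat A$ identifies $\kappa|_{\Sigma^2 H\Z/2}$ with $\beta_{\Z/2}\circ Sq^2$.

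Finally, the formula descends to the quotient $E^3_{i+3,2}=H_{i+3}(X;\Z/2)/Sq^2_*(H_{i+5}(X;\Z/2))$: if $x=Sq^2_*(y)$ for some $y\in H_{i+5}(X;\Z/2)$, then by the Adem relation $Sq^2 Sq^2=Sq^3 Sq^1$ and its homology dual $Sq^2_*\circ Sq^2_*=Sq^1_*\circ Sq^3_*$, one has $\beta_{\Z/2}\circ Sq^2_*(x)=\beta_{\Z/2}\circ Sq^1_*\circ Sq^3_*(y)=0$, where the last equality uses that $Sq^1_*=\rho_2\circ\beta_{\Z/2}$ in homology and that $\beta_{\Z/2}\circ\rho_2=0$ by exactness of the integral Bockstein sequence.

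The main obstacle is step two, namely identifying $\kappa|_{\Sigma^2 H\Z/2}$ as nontrivial. Comparison with $\mathbf{ko}$ via $\hat A$ is the cleanest route but does rely on the classical computation of the $k$-invariant for real $K$-theory; an alternative, more self-contained approach is to exhibit a specific spin manifold together with a class on which $d_3$ can be geometrically computed to be non-zero (for instance by choosing $X$ to be a suspension of $\R P^\infty$ and using the explicit structure of its low-dimensional $\mathbf{MSpin}$-homology), thereby ruling out the zero alternative.
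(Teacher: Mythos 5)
Your proposal is correct, and the overall framework is the same as the paper's: translate the $d_3$-differential into a $k$-invariant of the Postnikov tower of $\MSpin$ (living in $H^5$ of a $\K(\Z/2,2)$-spectrum, which is $\Z/2$ generated by $\boc \circ \Sq^2$), verify that the formula descends to the $E^3$-page, and then argue that the $k$-invariant is non-trivial. The difference is in the final step. The paper shows $\kappa \neq 0$ by passing to infinite loop spaces: it uses the map $G/O \to \Omega^\infty\MSpin$ inducing isomorphisms on $\pi_i$ for $2 \leq i \leq 7$, then Cerf's theorem that $PL/O$ is $6$-connected to compare $G/O$ with $G/PL$, and finally cites Madsen--Milgram for the non-triviality of the first $k$-invariant of $G/PL$. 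You instead use the Atiyah--Bott--Shapiro orientation $\hat A \colon \MSpin \to \mathbf{ko}$, which is a $7$-equivalence, to transport the problem to the corresponding $k$-invariant of $\mathbf{ko}$ and quote the known $d_3$-differential in the AHSS for connective real $K$-theory. Both routes are valid; yours is arguably more canonical and elegant since it reuses $\hat A$, which is central to the rest of the paper, whereas the paper's route is more geometric and self-contained (depending on surgery-theoretic facts rather than on the $\mathbf{ko}$ computation, which is itself established by the same sort of $k$-invariant argument). One small caveat: the $d_3 = \boc \circ \Sq^2$ statement for $\mathbf{ko}$-homology, while classical, is usually not attributed to Atiyah--Hirzebruch (their original paper treated the complex case); you would want a reference along the lines of Anderson--Brown--Peterson or Stong's notes, or a direct computation on a test space such as $\R P^\infty$, which is essentially your stated fall-back.

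Two points worth tightening up. First, the passage from the $k$-invariant $\kappa \in H^5(P_2;\Z)$ to the $d_3$-differential is stated loosely ("induced by postcomposition with $\kappa$"); the paper makes this precise by first replacing $\MSpin$ with its $1$-connected cover $\MSpin\an{2}$, which makes the $2$-line the bottom non-trivial row of the AHSS and gives the principal fibration $\K(\Z,4) \to \P_4\an{2} \to \K(\Z/2,2)$ directly, with $\kappa \in H^5(\K(\Z/2,2);\Z)$. Your restriction to the fiber $\Sigma^2 H\Z/2$ of $P_2 \to P_1$ achieves the same thing, but spelling out the reduction to the connective cover (as the paper does) makes the identification of $d_3$ with $\kappa$ cleaner. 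Second, your well-definedness check handles the quotient by $d_2$, which is the correct thing to verify; you should also note that there is no kernel condition because $d_2$ out of the $2$-line lands in the $3$-line, which vanishes since $\Omega^{Spin}_3 = 0$.
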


\begin{proof}
We first note that $\beta_{\Z/2} \circ Sq^2_* \circ Sq^2_* \colon H_{i+5}(X; \Z/2) \to H_i(X)$
vanishes.  This is because the Steenrod relation $Sq^2Sq^2 = Sq^3Sq^1$
implies that $Sq^2_* \circ Sq^2_* = Sq^1_* \circ Sq^3_*$ and because 
general $\beta_{\Z/2} \circ Sq^1_* = 0$ holds in general.
It follows that $\beta_{\Z/2} \circ Sq^2_* \colon H_{i+3}(X; \Z/2) \to H_i(X)$ 
induces a well-defined homomorphism $E^3_{i+3, 2} \to E^3_{i, 4}$.

To show that $d_3$ fits into the commutative diagram of the lemma
we let $\MSpin$ be the Thom spectrum for spin bordism
and $\MSpin\an{2} \to \MSpin$ its $1$-connected cover.
Now $\pi_i(\MSpin) = \Omega^{Spin}_i$ and so
$\pi_3(\MSpin) = \pi_5(\MSpin) = 0$ by \cite{mi65}.
It follows that the $d_3$-differentials from the
$2$-lines of the AHSSs for $\MSpin$ and $\MSpin\an{2}$ fit into the
following commutative diagram
$$
\xymatrix{
H_{i+3}(X; \Z/2) \ar[d] \ar[rr]^(0.6){d_3^{\MSpin\an{2}}} & &
H_i(X) \ar[d]^= \\
E^3_{i+3, 2} \ar[rr]^{d_3} & &
H_i(X),}$$
where the left vertical map $H_{i+1}(X; \Z/2) \to E^3_{i+1, 2}$ is surjective.
Hence we must show $d_3^{\MSpin\an{2}} = \beta_{\Z/2} \circ Sq^2_*$.
To do this, we consider the natural map $\MSpin\an{2} \to P_4(\MSpin\an{2})$ to 
the $4$th Postnikov stage of $\MSpin\an{2}$ as in \cite{ten17}
and we see that if suffices to consider the
$d_3$-differential from the $2$-line in the AHSS
for $\P_4\an{2} := P_4(\MSpin\an{2})$.
Now there is a fibration sequence of specta
\begin{equation} \label{eq:P4}
 \K(\Z, 4) \to \P_4\an{2} \to \K(\Z/2, 2),
\end{equation}
where $\K(A, q)$ denotes the Eilenberg-MacLane spectrum with $\pi_q(\K(A, q)) = A$
and it is elementary that the the $d_3$-differential 
$$ d_3^{\P_4\an{2}} \colon H_{i+3}(X; \pi_2(\P_4\an{2})) \to H_i\bigl( X; \pi_4(\P_4\an{2}) \bigr)$$
is given by the $k$-invariant, $\kappa \in H^5(\K(\Z/2, 2); \Z)$, of the fibration \eqref{eq:P4}.
Moreover, there is an isomorphism $H^5(\K(\Z/2, 2); \Z) \cong \Z/2(\boc(Sq^2(v)))$,
where $v \in H^2(\K(\Z/2, 2); \Z/2) = \Z/2$ is the generator and so we must show
that $\kappa$ is non-trivial. 

Given a spectrum $\M$ let $\Omega^\infty \M$ denote the corresponding infinite loop space.
To show $\kappa \neq 0$ we relate $\Omega^\infty\MSpin$ to the classifying spaces for surgery.
Recall that $BSO \to BSG$ is the map classifying the transition from oriented stable vector bundles 
to oriented stable spherical fibrations and $G/O$ be its homotopy fibre.
Similarly, let $G/PL$ be the homotopy fibre of $BSPL \to BSG$,
the analogous map for oriented stable piecewise linear bundles.
The space $G/O$ is an infinite loop space and there is a map
$c \colon G/O \to \Omega^\infty\MSpin$ such that
the induced map on homotopy groups 
$c_* \colon \pi_i(G/O) \to \pi_i(\Omega^\infty\MSpin) = \Omega^{Spin}_*$
is identified with the canonical map from almost framed bordism to spin bordism,
for $i \geq 2$, \cite[\S 2.3]{css18}.
It is well-known that this map is an isomorphism
on homotopy groups $i = 2, \dots, 7$.  Since $G/O$ is $1$-connected, it follows that
the induced map $P_4(G/O) \to \Omega^\infty\P_4\an{2}$ is a homotopy equivalence.
Now the fibre of the map $G/O \to G/PL$ is $PL/O$, the fibre of the canonical map 
$BSPL \to BSO$, which 6-connected by \cite{ce68} and so the map $G/O \to G/PL$ is
$6$-connected.
Moreover, the first $k$-invariant of $G/PL$ is known \cite[Theorem 4.8]{mm79} to be non-trivial.  
It follows that the first $k$-invariant of $G/O$ is
non-trivial and so the same holds for $\Omega^\infty \P_4\an{2}$ and thus $\P_4\an{2}$.

\end{proof}



\section{The top obstruction}  \label{s:top}
In this section $M$ will be an $n$-manifold with $n \equiv 1 \bmod 8$. 
Recall that $\mdot := M - \mathrm{int}(D^n)$
is the space obtained from $M$ by removing the interior of a small $n$-disc in $M$.
Suppose that $\xi$ is a stable real 
vector bundle over $M$ whose restriction to $\mdot$ 
admits a stable complex structure $h$, so that the top obstruction, 
$$\mathfrak{o}_{8k+1}(h) \subset H^{8k+1}(M; \pi_{8k}(SO/U) = H^{8k+1}(M; \Z/2)$$
is defined.  In the first part of this section we use $K$-theory to show that $\mathfrak{o}_{8k+1}(h) = 0$
if $w_2(M) \neq 0$ and that $\mathfrak{o}_{8k+1}(h)$
can be identified with the topological index $\pi^1(\xi, s)$ (defined in Section \ref{ss:K-theory})
if $M$ is spinnable and $s$ is a spin structure on $M$.
In the second part of this section, we identify $\pi^1(M, s)$ with the invariant $\sigma_{w_4}(M)$ 
of the introduction.


\subsection{The top obstruction via $K$-theory} \label{ss:K-theory}
Let $KO$ and $K$ denote respectively real and complex $K$-theory.
Let
$r \colon K \rightarrow KO$
denote the real reduction map and also the real reduction homomorphism
\[
r \colon \wt{K}(M) \to \wt{KO}(M).
\]
By definition, $M$ admits a stable complex structure if and only if the stable tangent bundle $\wt{TM} \in \wt{KO}(M)$ lies in the image of $r$.
More generally, for a real vector bundle $\xi$ over $M$, $\xi$ admits a stable complex structure over $M$ if 
$$\wt{\xi} := \xi - \dim \xi \in \wt{KO}(M)$$
 lies in the image of $r$.

Recall that if $M$ is spinnable, then $M$ is $KO$-orientable (cf.\ Atiyah, Bott and Shapiro \cite[Theorem (12.3)]{abs64}). For a spin structure $s$ on $M$, we denote by 
$\{ M, s\} \in KO_{n}(M)$
a $KO$-orientation of $(M, s)$.
Following \cite[\S 4]{abp66} we define
\[
\pi^{1}(\xi, s)
: = \langle \,\wt{\xi}\, , \{M, s\} \rangle_{KO},
\]
where $\langle \, \cdot \, , \, \cdot \, \rangle_{KO} $ is the Kronecker product in real $K$-theory.
%
%
The main result of this subsection is the following

\begin{theorem}\label{thm:topo}
Let $\xi$ be a real vector bundle over $M$. Suppose that $\xi |_{\mdot}$ admits a stable complex structure. Then $\xi$ admits a stable complex structure if and only if one of the following conditions holds:
\begin{enumerate}
\item[(a)] $w_2(M) = 0$ and there exists a spin structure $s$ on $M$ such that $\pi^{1}(\xi, s)=0$;
\item[(b)] $w_{2}(M) \neq 0$.
\end{enumerate}
\end{theorem}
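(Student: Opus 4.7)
The plan is to analyze the obstruction to lifting $\wt\xi \in \wt{KO}^0(M)$ through the realification $r \colon \wt K^0(M) \to \wt{KO}^0(M)$ by applying both $\wt K^*$ and $\wt{KO}^*$ to the cofibre sequence $M^\circ \hookrightarrow M \xrightarrow{q} S^n$ and comparing the resulting long exact sequences via $r$. For $n = 8k+1$ the relevant groups are
\[
\wt K^0(S^n)=0, \quad \wt K^1(S^n)\cong\Z, \quad \wt{KO}^0(S^n)=KO_n\cong\Z/2, \quad \wt{KO}^1(S^n)\cong\Z,
\]
and $r\colon \wt K^1(S^n)\to \wt{KO}^1(S^n)$ is injective (both sides are $\Z$, with $r$ being multiplication by $2$). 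Given a lift $\wt\eta_0 \in \wt K^0(M^\circ)$ of $\wt\xi|_{M^\circ}$ (which exists by hypothesis), commutativity of the ladder and injectivity of $r$ force $\delta_K(\wt\eta_0)=0$, so $\wt\eta_0$ extends to some $\wt\eta\in \wt K^0(M)$. The discrepancy $r(\wt\eta)-\wt\xi\in\wt{KO}^0(M)$ restricts to zero on $M^\circ$, hence lies in $\im(q^*\colon \wt{KO}^0(S^n)=\Z/2 \to \wt{KO}^0(M))$, and defines an obstruction class $\alpha\in\Z/2$ well-defined modulo the indeterminacy arising from different choices of $\wt\eta_0$.

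For Case (a), $w_2(M)=0$: a spin structure $s$ produces the Atiyah-Bott-Shapiro $KO$-orientation $\{M,s\}$ and thus the pairing $\Pi\colon \wt{KO}^0(M)\to KO_n\cong \Z/2$ defined by $\Pi(\wt\zeta)=\langle\wt\zeta,\{M,s\}\rangle_{KO}$, which by definition satisfies $\Pi(\wt\xi)=\pi^1(\xi,s)$. I will verify that (i) $\Pi$ vanishes on $\im(r)$ because the analogous complex pairing lands in $K_n=0$, so $\Pi$ descends to $\wt{KO}^0(M)/\im(r)$; and (ii) the composite $\Pi\circ q^*\colon \wt{KO}^0(S^n)\to \Z/2$ is the identity, since $q_*\{M,s\}$ generates $KO_n(S^n)\cong\Z/2$. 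Together these identifications give $\alpha=\pi^1(\xi,s)$, so $\xi$ admits a stable complex structure if and only if $\pi^1(\xi,s)=0$.

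For Case (b), $w_2(M)\neq 0$: the goal is to show that the obstruction $\alpha$ vanishes automatically. The cleanest formulation is that the pullback $q^*\colon \wt{KO}^0(S^n)=\Z/2\to\wt{KO}^0(M)$ is the zero map; by exactness this is equivalent to the connecting map $\wt{KO}^{-1}(M^\circ)\to \wt{KO}^0(S^n)$ being surjective. I expect to produce an explicit class in $\wt{KO}^{-1}(M^\circ)$ hitting the generator of $\Z/2$ using the non-vanishing of $w_2(M)$; equivalently, the indeterminacy from varying $\wt\eta_0$ (elements of $\ker(r|_{M^\circ})$ that extend to $\wt K^0(M)$) already covers the full $\Z/2$ under $r$ in the non-spin case.

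The main obstacle will be Case (b): rigorously establishing the vanishing of $q^*$ on $\wt{KO}^0(S^n)$ when $M$ is non-spin. Morally, the generator of $KO_{8k+1}\cong\Z/2$ is detected by the mod-$2$ Dirac index, whose evaluation on $M$ requires a spin structure, making this vanishing plausible. I anticipate a rigorous proof will proceed either via an Atiyah-Hirzebruch spectral-sequence analysis of $\wt{KO}^*(M)$ relating the non-vanishing of $w_2(M)$ to a nontrivial differential out of the $KO_n$-line, or via a direct construction of the absorbing class in $\wt{KO}^{-1}(M^\circ)$ using the ABS spin$^c$-orientation that exists when $W_3=0$.
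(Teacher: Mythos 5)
Your plan is sound and matches the paper's overall strategy: apply $\wt K$ and $\wt{KO}$ to the cofibre sequence $M^\circ\hookrightarrow M\to S^n$, compare via $r$, show the obstruction to complex structure lives in $\Im(p^*)\subset\wt{KO}(M)$, and then treat the spin and non-spin cases separately. Case (a) is handled correctly and essentially as the paper does (Propositions \ref{prop:extc} and \ref{prop:pi1}). Your alternate route to extendability of $\wt\eta_0$ (injectivity of $r\colon\wt K^1(S^n)\to\wt{KO}^1(S^n)$) is a clean variant of the paper's argument.

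The genuine gap is Case (b): you never actually prove it, and the specific formulation you call ``cleanest'' is not the right target. You propose to show $q^*\colon\wt{KO}^0(S^n)\to\wt{KO}^0(M)$ is zero, but that is strictly stronger than what is needed and may well fail. The correct (and sufficient) condition is $\Im(q^*)\subset\Im(r)$: if $r(\wt\eta)-\wt\xi=q^*(\zeta)=r(\wt\omega)$ for some $\wt\omega\in\wt K(M)$, then $\wt\xi=r(\wt\eta-\wt\omega)$, which is exactly what your ``indeterminacy'' phrasing gets at. The paper establishes this weaker inclusion via the Bott exact sequence $\wt K(X)\xrightarrow{r}\wt{KO}(X)\xrightarrow{\gamma}KO^{-1}(X)$: naturality reduces $\Im(q^*)\subset\ker\gamma=\Im(r)$ to the vanishing of $p^*\colon KO^{-1}(S^n)\to KO^{-1}(M)$, equivalently to injectivity of $i^*\colon KO^{-1}(M)\to KO^{-1}(M^\circ)$ (Lemma~\ref{lem:nspin} region of the paper). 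That injectivity is then proved (Lemma in Section~\ref{ss:K-theory}) by the AHSS for $KO^*(M)$: the $d_2$-differential $H^{n-2}(M;KO^{-n})\to H^{n}(M;KO^{-n-1})$ is $\Sq^2$, which is onto whenever $w_2(M)\neq 0$ by Poincar\'e duality and Wu's formula, killing $E_\infty^{n,-n-1}$ and hence the top filtration of $KO^{-1}(M)$. You gesture at the AHSS idea, but you would still need the Bott-sequence step to convert the $KO^{-1}$ computation into the statement about $\Im(q^*)$ in $\wt{KO}^0$; without that bridge, the relevant AHSS differential lands in the group $E^{n,-n}$ (with integral coefficients on the source, via $\Sq^2\circ\rho_2$), and surjectivity there is not guaranteed by $w_2\neq 0$ alone. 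So the non-spin direction remains unproved and the stated target ($q^*=0$) should be replaced by $\Im(q^*)\subset\Im(r)$.
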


\begin{remark}\label{rem:pi1s}
If $w_2(M) = 0$ and $\xi$ admits a stable complex structure over $\mdot$, it follows immediately from Theorem \ref{thm:topo} and Proposition \ref{prop:pi1} that the value of 
\[
\pi^1(\xi, s)
\in KO_{n}(\pt) \cong \Z/2
\]
depends neither on the spin structure $s$, nor on the choice of $KO$-orientation $\{M, s\}$.
\end{remark}

We are most interested in the case where $ \xi = TM$ and in this case
we set 
$$\pi^1(M, s) := \pi^1(TM, s).$$
We recall from the Lemma \ref{lem:omegac} that
the value of $\Omega(p_c(M))$ does not depend on the choice of $c$.
Now from Theorems \ref{thm:topo}, \ref{thm:Omega} and \ref{thm:spin} 
we immediately obtain

\begin{theorem}\label{thm:mainpi}
Let $M$ be a closed orientable $9$-manifold. 
Then $M$ admits a contact structure if and only if one of the following conditions is satisfied:
\begin{enumerate}
\item[(a)] 
$M$ is spinnable, 
$w_8(M) = 0$
and there exists a spin structure $s$ on $M$ such that 
$\pi^{1}(M, s)=0;$
\item[(b)] $M$ is spin$^{c}$-able with 
$w_{2}(M)\neq0$ and 
$\Omega(p_c(M))=0,$
where $c \in H^{2}(M; \Z)$ is a spin$^{c}$ characteristic class of $M$, i.e., $\rho_{2} (c) = w_{2}(M)$.
\hfill
\qed
\end{enumerate}

\end{theorem}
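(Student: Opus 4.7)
The plan is to assemble the result directly from the pieces already established in the paper, since the serious obstruction-theoretic work has been done in Sections \ref{s:W7}, \ref{s:Mcirc}, and \ref{s:top}. First I would invoke the Borman--Eliashberg--Murphy theorem together with \cite[Lemma 2.17(1)]{bcs14} to replace the existence of a contact structure on $M$ by the existence of a stable complex structure on $\wt{TM}$, equivalently a lift of the classifying map $M \to BSO$ through $F\colon BU \to BSO$. This converts the problem into a skeleton-by-skeleton obstruction calculation for the fibration $SO/U \to BU \to BSO$.

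Next I would climb the obstruction tower using the obstructions $\mathfrak{o}_3, \mathfrak{o}_7, \mathfrak{o}_8, \mathfrak{o}_9$. The primary obstruction is $\mathfrak{o}_3(M) = W_3(M)$ by Lemma \ref{lem:obs}(a), so if $W_3(M)\neq 0$ then no contact structure exists; note that in this situation $w_2(M)\neq 0$ and $W_3(M)\neq 0$, so both (a) and (b) fail, consistent with the statement. Assuming $W_3(M)=0$, Theorem \ref{thm:W7} gives $\mathfrak{o}_7(M) = W_7(M) = 0$ automatically. Theorem \ref{thm:Omega} identifies the obstruction over $\mdot$ as $[\Omega(p_c(M))]$, which, by Theorem \ref{thm:spin}, reduces in the spinnable case to $w_8(M)$. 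The top obstruction $\mathfrak{o}_9(M)$ is then handled by Theorem \ref{thm:topo}: it vanishes automatically when $w_2(M)\neq 0$, and otherwise equals $\pi^{1}(M,s)$ for an appropriate spin structure $s$.

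Finally I would split into the mutually exclusive cases $w_2(M)=0$ and $w_2(M)\neq 0$ (with $W_3(M)=0$) to land on conditions (a) and (b) respectively. The only delicate point, and the one worth highlighting, is case (b): here the existence of a stable complex structure over $\mdot$ is already sufficient for existence over all of $M$, because the top obstruction vanishes by Theorem \ref{thm:topo}(b); this is why no top-dimensional invariant appears in (b). Beyond this collation step there is no real obstacle, as the conceptual difficulty is absorbed into the four theorems being combined.
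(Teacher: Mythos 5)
Your proof is correct and follows essentially the same route as the paper: the paper deduces Theorem \ref{thm:mainpi} immediately from Theorems \ref{thm:topo}, \ref{thm:Omega} and \ref{thm:spin}, and your obstruction-tower walk through $\mathfrak{o}_3, \mathfrak{o}_7, \mathfrak{o}_8, \mathfrak{o}_9$ is precisely the argument underlying those three theorems (with Theorem \ref{thm:W7} and Lemma \ref{lem:obs} implicitly absorbed into them). The observation you flag as the delicate point---that no top-dimensional invariant appears in case (b) because $\mathfrak{o}_9$ vanishes automatically when $w_2(M)\neq 0$ by Theorem \ref{thm:topo}(b)---is indeed the only non-mechanical feature of the assembly.
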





Before proving Theorem \ref{thm:topo} we give an index-theoretic interpretation of $\pi^1(M, s)$.
Suppose that $M$ is spinnable with dimension $n \equiv 1 \bmod 8$. 
Let $\slashed{\mathfrak{D}}$ be the canonical $Cl_n$-linear Atiyah-Singer operator,
and let $\slashed{\mathfrak{D}}_{TM}$ be the $Cl_n$-linear Atiyah-Singer operator with coefficients in $TM$.
Denote by 
$$\ind_n(\slashed{\mathfrak{D}}_{TM}) \in KO^{-n}(\pt)
\quad \text{and} \quad
\ind_n(\slashed{\mathfrak{D}}) \in KO^{-n}(\pt)$$
the analytic index of $\slashed{\mathfrak{D}}_{TM}$ and 
$\slashed{\mathfrak{D}}$ respectively (\cite[Chapter II, \S 7]{lm89}).
If we regard $\pi^1(M,s)$ as an element in $KO^{-n}(\pt)$ under the isomorphism $KO^{-n}(\pt) \cong KO_n(\pt)$, then it follows from Lawson and Michelsohn \cite[p. 276, Theorems 16.6 and Theorem 16.8]{lm89} that

\begin{lemma} \label{lem:index}
$\pi^1(M,s) = \ind_n(\slashed{\mathfrak{D}}_{TM}) - \ind_n(\slashed{\mathfrak{D}})$.
\hfill \qed
\end{lemma}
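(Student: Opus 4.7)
The plan is to identify $\pi^1(M, s)$ as a twisted Dirac index via a direct unpacking of the Kronecker pairing with the $KO$-fundamental class, and then to appeal to the $Cl_n$-linear Atiyah-Singer index theorem, whose relevant form is already contained in Lawson-Michelsohn, Chapter II, Theorems 16.6 and 16.8. By the definition of $\pi^1$ in Section \ref{ss:K-theory} we have
\[
\pi^1(M, s) = \langle \wt{TM}, \{M, s\}\rangle_{KO},
\]
and since $\wt{TM} = TM - n \cdot \mathbf{1}$ in $\wt{KO}(M)$, where $\mathbf{1}$ denotes the trivial line bundle, additivity of the Kronecker product gives
\[
\pi^1(M, s) = \langle TM, \{M, s\}\rangle_{KO} - n \cdot \langle \mathbf{1}, \{M, s\}\rangle_{KO}.
\]

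Next I would invoke the cited theorems of Lawson-Michelsohn. Theorem 16.6 identifies the topological index of the $Cl_n$-linear Atiyah-Singer operator $\slashed{\mathfrak{D}}_E$ twisted by a real vector bundle $E$ with the $KO$-theoretic pairing $\langle E, \{M, s\}\rangle_{KO} \in KO_n(\pt)$; Theorem 16.8 is the corresponding $Cl_n$-linear index theorem, equating the topological and analytic indices. Applied to $E = TM$ and to the trivial line bundle $E = \mathbf{1}$, for which the associated twisted operator is the untwisted operator $\slashed{\mathfrak{D}}$, these yield
\[
\langle TM, \{M, s\}\rangle_{KO} = \ind_n(\slashed{\mathfrak{D}}_{TM})
\quad \text{and} \quad
\langle \mathbf{1}, \{M, s\}\rangle_{KO} = \ind_n(\slashed{\mathfrak{D}}).
\]

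Combining the displays and using the standing hypothesis $n \equiv 1 \bmod 8$, so that $KO^{-n}(\pt) \cong \Z/2$, the odd integer $n$ in front of $\ind_n(\slashed{\mathfrak{D}})$ collapses to $1$ and we obtain the stated identity
\[
\pi^1(M, s) = \ind_n(\slashed{\mathfrak{D}}_{TM}) - \ind_n(\slashed{\mathfrak{D}}).
\]
The only real obstacle is bookkeeping: one must check that the sign, dimension-shift, and choice of $KO$-orientation conventions in Lawson-Michelsohn match those used in the definition of $\pi^1$ and in $\{M, s\}$ here. Since the whole argument is a restatement of the $Cl_n$-linear index theorem under our conventions, no new geometric or analytic input is needed beyond what is already packaged in \cite{lm89}.
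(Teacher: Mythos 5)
Your proposal is correct and takes essentially the same route as the paper's own (commented-out) argument: both split $\wt{TM} = TM - n$, use $n \equiv 1 \bmod 8$ so that the factor of $n$ collapses to $1$ in $KO^{-n}(\pt) \cong \Z/2$, and then appeal to Theorems 16.6 and 16.8 of Lawson-Michelsohn. The only place where the paper is more explicit is in justifying why $\langle E, \{M,s\}\rangle_{KO}$ coincides with Lawson-Michelsohn's topological index $\mu\bigl(\pi^{\ast}(\phi_t^{\ast}(E))\bigr)$: the paper embeds $M$ into $S^{n+8l}$, fixes a $KO$-Thom class $t$ for the normal bundle, and checks $\{M,s\} = \phi_t(\pi_{\ast}(l_{n+8l}))$, so that the Kronecker pairing is carried by adjunction to the Thom--Bott construction LM actually evaluate; you fold this bookkeeping into your reading of Theorem 16.6, which is a reasonable compression but worth being aware of.
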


We now turn to the proof of Theorem \ref{thm:topo}, which we reduce to the following
sequence of propositions and lemmas.
Recall that the complexification map 
$c \colon KO \rightarrow K$
is a ring morphism and it makes $K$ in to a $KO$-module spectrum. Moreover, the real reduction map $r$ is a $KO$-module morphism.
Therefore, we have the Kronecker product:
\[
\langle  \, \cdot \, ,  \, \cdot \, \rangle_{K} \colon K^{p}(M) \times KO_{q}(M) \rightarrow K_{q-p}(\pt),
\]
and it follows immediately from the definition of Kronecker product (cf. Adams \cite[p.\,233]{ad74b}) and $KO$-module morphism (cf. Rudyak \cite[p.\,52]{ru98b}) that
\begin{equation}\label{eq:Krn}
r (\langle x, y \rangle_{K}) = \langle r(x), y \rangle_{KO}
\end{equation}
holds for any $x \in K^{p}(M)$ and $y \in KO_{q}(M)$.

Denote by $i\colon \mdot \rightarrow M$ the inclusion map and $p \colon M \rightarrow S^{n}$ the map by collapsing $\mdot$ to a point. Consider the following exact ladder
\begin{equation}\label{eq:KOK}
\begin{split}
\xymatrix{
\widetilde{K}(S^{n}) \ar[r]^-{p^{\ast }} \ar[d]_{r} & \widetilde{K}(M) \ar[r]^-{i^{\ast}} \ar[d]_{r} & \widetilde{K}(\mdot) \ar[d]_{r}  \\
\widetilde{KO}(S^{n}) \ar[r]^-{p^{\ast}}  & \widetilde{KO}(M) \ar[r]^-{i^{\ast}}  & \widetilde{KO}(\mdot),  
}
\end{split}
\end{equation}
where $p^{\ast}$ and $i^{\ast}$ are the homomorphisms induced by $p$ and $i$ respectively. 

\begin{proposition}\label{prop:extc}
The homomorphism
$i^{\ast} \colon \widetilde{K}(M) \rightarrow \widetilde{K}(\mdot)$
is an isomorphism.
\end{proposition}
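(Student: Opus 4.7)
The plan is to use the Puppe cofiber sequence of the inclusion $\mdot \hookrightarrow M$. Writing $M = \mdot \cup_{f} D^n$ for the attaching map $f \colon S^{n-1} \to \mdot$ of the top $n$-cell (so that $M/\mdot \cong S^n$), the extended Puppe sequence gives, after applying reduced complex $K$-theory, the exact sequence
\[
\widetilde{K}(\Sigma \mdot) \to \widetilde{K}(S^n) \xrightarrow{p^*} \widetilde{K}(M) \xrightarrow{i^*} \widetilde{K}(\mdot) \xrightarrow{f^*} \widetilde{K}(S^{n-1}),
\]
in which the connecting morphism at $\widetilde{K}(\mdot)$ is identified with pullback along $f$. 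Since $n$ is odd, Bott periodicity gives $\widetilde{K}(S^n) = 0$, so $i^*$ is immediately injective.

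For surjectivity of $i^*$, I would show that $f^* = 0$. Set $k = (n-1)/2$. The group $\widetilde{K}(S^{2k}) \cong \mathbb{Z}$ is torsion free, and because all lower Chern classes vanish on $S^{2k}$, the Chern character there reduces to a non-zero rational multiple of $c_k$; in fact $c_k$ of the Bott generator equals $\pm (k{-}1)!$ times a generator of $H^{2k}(S^{2k})$. Hence $c_k \colon \widetilde{K}(S^{2k}) \to H^{2k}(S^{2k})$ is injective, and by naturality it is enough to verify that $c_k(f^*\eta) = f^*(c_k(\eta)) = 0$ for every $\eta \in \widetilde{K}(\mdot)$, i.e.\ that $f^* \colon H^{n-1}(\mdot) \to H^{n-1}(S^{n-1})$ is the zero map.

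This final vanishing comes from the portion
\[
H^{n-1}(M) \to H^{n-1}(\mdot) \xrightarrow{f^*} H^{n-1}(S^{n-1}) \to H^n(M) \to H^n(\mdot)
\]
of the cohomology long exact sequence of $S^{n-1} \to \mdot \to M$, combined with the orientability of $M$: we have $H^n(\mdot) = 0$ since $\mdot$ deformation retracts onto the $(n{-}1)$-skeleton, while orientability gives $H^n(M) \cong \mathbb{Z} \cong H^{n-1}(S^{n-1})$, so the penultimate map is a surjection between free abelian groups of rank one and hence an isomorphism, forcing $f^*$ on $H^{n-1}$ to vanish. The only delicate point is the Puppe-sequence identification of the $K$-theoretic connecting map at $\widetilde{K}(\mdot)$ with pullback along the attaching map $f$; once that is in hand, the rest is driven entirely by Bott periodicity, the injectivity of $c_k$ on $\widetilde{K}(S^{2k})$, and the orientability of $M$.
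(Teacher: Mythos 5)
Your proposal is correct and follows essentially the same route as the paper: injectivity comes from $\widetilde{K}(S^n) = 0$ for odd $n$ together with the exact sequence of the cofibration $\mdot \to M \to S^n$, and surjectivity comes from showing that the top Chern class of the pullback along the attaching map $f$ vanishes. The paper phrases surjectivity as ``every $\eta \in \widetilde K(\mdot)$ extends to $M$'' and appeals to its Lemma~\ref{lem:8ext9}, while you phrase it as $f^* = 0$ in $\widetilde K(S^{n-1})$; these are the same statement once the Puppe sequence is invoked. You do supply two details the paper leaves implicit — that $c_k$ is injective on $\widetilde{K}(S^{2k})$ (via the Chern character being $\pm c_k/(k{-}1)!$ when lower Chern classes vanish) and that $f^*$ on top-degree cohomology vanishes because of orientability (the boundary map $H^{n-1}(S^{n-1}) \to H^n(M)$ is a surjection $\Z \to \Z$, hence an isomorphism) — but these are exactly the justifications one would supply for the paper's terser statements.
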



\begin{proof}
On the one hand, since $\wt{K}(S^{n}) = 0$ when $n\equiv 1 \bmod{8}$, it follows from the exact ladder \eqref{eq:KOK} that $i^{\ast}$ is injective.
On the other hand, for any $\eta \in \wt{K}(\mdot)$, 
it can be proved in much the same way as Lemma \ref{lem:8ext9} that $\eta$ can be extended to $M$, hence $i^{\ast}$ is surjective.
\end{proof}

\begin{proposition}\label{prop:pi1}
Suppose that $M$ is spinnable, $s$ is a spin structure on $M$ and $\{ M, s\} \in KO_{n}(M)$
is a $KO$-orientation of $(M, s)$.  Then
\begin{enumerate}
\item[(a)] for any $\zeta \in \widetilde{K}(M)$,  $\langle r(\zeta), \{M, s\} \rangle_{KO} = 0$, 
\item[(b)] for $\zeta \in \widetilde{KO}(S^{n})$, $\langle p^{\ast}(\zeta), \{M, s\} \rangle_{KO} = 0$ if and only if $\zeta = 0$.
\end{enumerate}
\end{proposition}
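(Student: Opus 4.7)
The plan for part $(a)$ is to apply Equation \eqref{eq:Krn} directly. Since $n \equiv 1 \bmod{8}$ is odd, Bott periodicity gives $K_n(\pt) = 0$, so the pairing $\langle \zeta, \{M, s\}\rangle_K \in K_n(\pt)$ vanishes for any $\zeta \in \wt{K}(M)$. Applying the real reduction $r$ and invoking \eqref{eq:Krn} then yields
$$\langle r(\zeta), \{M, s\}\rangle_{KO} = r\bigl(\langle \zeta, \{M, s\}\rangle_K\bigr) = 0,$$
as required.

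For part $(b)$, my plan is to use naturality of the Kronecker product under $p \colon M \to S^n$ to rewrite
$$\langle p^{\ast}(\zeta), \{M, s\}\rangle_{KO} = \langle \zeta, p_{\ast}\{M, s\}\rangle_{KO}.$$
The group $KO_n(S^n)$ splits as $\wt{KO}_n(S^n) \oplus KO_n(\pt) \cong \Z \oplus \Z/2$, where the $\Z$-summand is generated by the standard $KO$-fundamental class $[S^n]$ of the spin manifold $S^n$. Under the identifications $\wt{KO}(S^n) \cong KO_n(\pt) = \Z/2$ and $\wt{KO}_n(S^n) \cong KO_0(\pt) = \Z$, the Kronecker pairing $\wt{KO}(S^n) \times \wt{KO}_n(S^n) \to KO_n(\pt)$ is multiplication in $KO_{\ast}(\pt)$, equivalently the reduction-mod-$2$ pairing $\Z/2 \otimes \Z \to \Z/2$. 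Hence $(b)$ reduces to showing that the $\wt{KO}_n(S^n)$-component of $p_{\ast}\{M,s\}$ is odd.

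The main obstacle is this last step, which I would establish by appealing to the locality of $KO$-fundamental classes. The map $p$ factors as $M \to M/\mdot \cong S^n$, and under excision the image of $\{M,s\}$ in $KO_n(M, \mdot) \cong \wt{KO}_n(S^n)$ is the relative $KO$-fundamental class of the removed disk pair $(D^n, \partial D^n)$. By the construction of the $KO$-orientation from the Atiyah-Bott-Shapiro Thom class \cite{abs64}, applied to a trivialising chart around the removed point, this relative class is a generator of $\wt{KO}_n(S^n) \cong \Z$. Combining this with the previous paragraph, $\langle p^{\ast}(\zeta), \{M, s\}\rangle_{KO}$ identifies with $\zeta \in \Z/2$, which vanishes if and only if $\zeta = 0$.
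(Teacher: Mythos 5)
Your proof of part~(a) is identical to the paper's. For part~(b), you take a somewhat different route. The paper's proof first cites a lemma of Rudyak to conclude that $p_{\ast}\{M,s\}$ is itself a $KO$-orientation of $S^n$, and then simply invokes Poincar\'e duality for $S^n$ to finish. You instead work by hand: you split $KO_n(S^n)$ into its reduced and basepoint summands, observe that the pairing of a reduced class with the basepoint summand vanishes and that the pairing with $\wt{KO}_n(S^n)$ is the mod-$2$ pairing under suspension, and then show the component of $p_{\ast}\{M,s\}$ in $\wt{KO}_n(S^n) \cong KO_n(M,\mdot)$ is a generator by locality of $KO$-orientations. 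The two arguments prove the same thing; yours unfolds the abstract statements the paper delegates to citations, at the cost of more bookkeeping, and is correct.

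One small imprecision worth fixing: the fundamental class $p_{\ast}\{M,s\}$ need not lie \emph{in} the $\Z$-summand $\wt{KO}_n(S^n)$ of $KO_n(S^n) \cong \Z \oplus \Z/2$ --- rather, it \emph{projects} onto a generator of that summand, which is all the argument requires since the other summand is annihilated by the pairing with $\wt{KO}(S^n)$.
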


\begin{proof}
Note that we have $n \equiv 1 \bmod 8$.

$(a)$ Since $\langle \zeta, \{M, s\} \rangle_{K} \in K_{n}(pt) = 0$, 
$$\langle r(\zeta), \{M, s\} \rangle_{KO} = r(\langle \zeta, \{M, s\} \rangle_{K}) = 0$$
by Equation \eqref{eq:Krn}.

$(b)$ Denote by $s_{0}$ the spin structure on $S^{n}$ and $p_{\ast} \colon KO_{n}(M) \rightarrow KO_{n}(S^n)$ the induced homomorphism. 
Since $p_{\ast}(\{M, s\})$ is a $KO$-orientation of $S^{n}$ (cf. Rudyak \cite[p. 321, 2.12. Lemma]{ru98b}), denote it by $\{S^{n}, s_{0}\}$, it follows that
$$\langle p^{\ast}(\zeta), \{M, s\} \rangle_{KO} = \langle \zeta, p_{\ast}(\{M, s\}) \rangle_{KO} = \langle \zeta, \{S^{n}, s_{0}\} \rangle_{KO}.$$
Therefore $(b)$ follows immediately from the Poincare Duality.
\end{proof}

\begin{proposition}\label{prop:nspin}
$w_{2}(M) \neq 0$ if and only if
\begin{equation*}
\mathrm{Im}~[p^{\ast} \colon \widetilde{KO}(S^{n}) \rightarrow \widetilde{KO}(M)] \subset \mathrm{Im}~[r \colon \widetilde{K}(M) \rightarrow \widetilde{KO}(M)].
\end{equation*}
\end{proposition}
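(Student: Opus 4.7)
The plan is to prove both directions. First I observe that since $n \equiv 1 \pmod{8}$, $\wt{KO}(S^{n}) \cong \Z/2$; letting $\eta_{n}$ denote a generator, the inclusion $\mathrm{Im}(p^{\ast}) \subset \mathrm{Im}(r)$ is equivalent to the single statement $p^{\ast}(\eta_{n}) \in \mathrm{Im}(r)$, since the only other element of $\mathrm{Im}(p^{\ast})$ is $0$, which lies in $\mathrm{Im}(r)$ trivially.

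For the ``only if'' direction I will argue by contraposition. Suppose $w_{2}(M) = 0$, so $M$ is spinnable; choose any spin structure $s$ on $M$ and associated $KO$-orientation $\{M, s\} \in KO_{n}(M)$. By Proposition \ref{prop:pi1}(b), the pairing $\langle p^{\ast}(\eta_{n}), \{M, s\} \rangle_{KO}$ is non-zero, while by Proposition \ref{prop:pi1}(a) it vanishes on every element of $\mathrm{Im}(r)$. Therefore $p^{\ast}(\eta_{n}) \notin \mathrm{Im}(r)$, and the inclusion $\mathrm{Im}(p^{\ast}) \subset \mathrm{Im}(r)$ fails.

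For the ``if'' direction, assume $w_{2}(M) \neq 0$; we must produce a class $\zeta \in \wt{K}(M)$ with $r(\zeta) = p^{\ast}(\eta_{n})$. My plan is to chase the commutative diagram \eqref{eq:KOK} using the isomorphism $i^{\ast} \colon \wt{K}(M) \xrightarrow{\cong} \wt{K}(\mdot)$ from Proposition \ref{prop:extc}. Any such $\zeta$ is determined by its restriction $\mu := i^{\ast}(\zeta) \in \wt{K}(\mdot)$, and the condition $r(\zeta) = p^{\ast}(\eta_{n}) \in \mathrm{Ker}(i^{\ast})$ forces $r(\mu) = 0$ in $\wt{KO}(\mdot)$. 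Hence the question reduces to showing that the map $\mu \mapsto r\bigl((i^{\ast})^{-1}(\mu)\bigr)$ sends some element of $\mathrm{Ker}\bigl(r \colon \wt{K}(\mdot) \to \wt{KO}(\mdot)\bigr)$ onto the non-zero element $p^{\ast}(\eta_{n})$ of $\mathrm{Im}(p^{\ast})$, and this is where the hypothesis $w_{2}(M) \neq 0$ must enter.

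The main obstacle will be producing this non-triviality. I anticipate using the Wood cofibration $\Sigma KO \xrightarrow{\eta} KO \xrightarrow{c} K$ and the identities $rc = 2$ and $cr = 1 + \psi^{-1}$ to express the obstruction to $p^{\ast}(\eta_{n}) \in \mathrm{Im}(r)$ in terms of $\eta$-multiplication and Adams operations on $\wt{KO}(M)$, and then to show that this obstruction is controlled by $w_{2}(M)$ and so vanishes precisely in the non-spin case; an alternative route would be a direct obstruction-theoretic argument comparing the failure of surjectivity of $r$ on $\mathrm{Ker}(i^{\ast}) \subset \wt{KO}(M)$ with the spin obstruction of $M$.
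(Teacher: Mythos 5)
Your ``only if'' direction is correct and is exactly the paper's argument: evaluate on a $KO$-orientation, apply Proposition~\ref{prop:pi1}(a) to kill $\mathrm{Im}(r)$ and \ref{prop:pi1}(b) to see $p^{\ast}(\eta_n)$ pairs nontrivially, then contradict.

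The ``if'' direction, however, has a genuine gap: you set the problem up correctly (reduce to showing $p^*(\eta_n) \in \mathrm{Im}(r)$, note via Proposition~\ref{prop:extc} and commutativity that any preimage $\zeta$ would satisfy $r(i^*\zeta)=0$), but you never actually produce $\zeta$, and in particular you never say where $w_2(M) \neq 0$ enters. Your ``anticipated'' route via $rc=2$ and $cr = 1 + \psi^{-1}$ does not obviously lead anywhere; those identities relate $r$ and $c$ but give no handle on $w_2$. The paper's mechanism has two steps you are missing. First, it uses the Bott (Wood) exact sequence $\wt{K}(X) \xrightarrow{r} \wt{KO}(X) \xrightarrow{\gamma} KO^{-1}(X)$, which gives $\mathrm{Im}(r) = \ker\gamma$; combined with naturality and the fact that $\gamma$ is an isomorphism on $S^n$, the condition $\mathrm{Im}(p^*) \subset \mathrm{Im}(r)$ is equivalent to $p^* \colon KO^{-1}(S^n) \to KO^{-1}(M)$ being zero, i.e.\ to $i^* \colon KO^{-1}(M) \to KO^{-1}(\mdot)$ being injective. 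Second --- and this is the crucial input you don't have --- it proves that injectivity via the Atiyah--Hirzebruch spectral sequence for $KO^*(M)$: the $d_2$-differential $E_2^{n-2,-n} \to E_2^{n,-n-1}$ is $Sq^2$, so $w_2(M)\neq 0$ forces $d_2 \neq 0$, hence $E_\infty^{n,-n-1} = 0$, which is exactly the statement that $\ker\bigl(i^* \colon KO^{-1}(M) \to KO^{-1}(M^{(n-1)})\bigr)$ vanishes. Without this $Sq^2 = d_2$ computation (or something equivalent), the hypothesis $w_2 \neq 0$ is never used, so your sketch cannot close.
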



We have divided the proof of Proposition \ref{prop:nspin} into a sequence of lemmas as follows.
\begin{lemma}
Suppose that
$\mathrm{Im}~[p^{\ast} \colon \widetilde{KO}(S^{n}) \rightarrow \widetilde{KO}(M)] \subset \mathrm{Im}~[r \colon \widetilde{K}(M) \rightarrow \widetilde{KO}(M)]$.
Then $w_{2}(M) \neq 0$.
\end{lemma}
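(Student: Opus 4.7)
The plan is to prove the contrapositive: assuming $w_{2}(M) = 0$, I will exhibit an element in $\mathrm{Im}[p^{\ast}]$ that does not lie in $\mathrm{Im}[r]$, contradicting the hypothesised inclusion.

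First, I would use the spinnability of $M$ to pick a spin structure $s$ on $M$ together with a $KO$-fundamental class $\{M,s\} \in KO_{n}(M)$, so that the Kronecker pairing $\langle \,\cdot\,, \{M,s\}\rangle_{KO} \colon \widetilde{KO}(M) \to KO_{n}(\pt) \cong \Z/2$ (recall $n \equiv 1 \bmod 8$) is available. Next I would choose the nonzero generator $\zeta_{0}$ of $\widetilde{KO}(S^{n}) \cong \Z/2$; by Proposition \ref{prop:pi1}(b) applied to $\zeta_{0}$,
\[
\langle p^{\ast}(\zeta_{0}), \{M,s\}\rangle_{KO} \neq 0.
\]

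On the other hand, by Proposition \ref{prop:pi1}(a), every class in $\mathrm{Im}[r\colon \widetilde{K}(M) \to \widetilde{KO}(M)]$ pairs trivially with $\{M,s\}$. Consequently $p^{\ast}(\zeta_{0}) \notin \mathrm{Im}(r)$, so the inclusion $\mathrm{Im}[p^{\ast}] \subset \mathrm{Im}[r]$ fails. This contradicts the hypothesis, forcing $w_{2}(M) \neq 0$.

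The argument is essentially a one-step deduction from Proposition \ref{prop:pi1}; the only subtlety to watch is that the two parts of that proposition are being applied to the same $KO$-orientation simultaneously, so $\{M,s\}$ must be fixed throughout. There is no substantial obstacle beyond ensuring the generator $\zeta_{0} \in \widetilde{KO}(S^{n})$ is indeed detected by the pairing, which is precisely the content of Poincar\'e duality invoked in the proof of Proposition \ref{prop:pi1}(b).
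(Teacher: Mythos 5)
Your proposal is correct and matches the paper's argument: both assume $w_2(M)=0$ (for contradiction, respectively for the contrapositive), pick the generator of $\widetilde{KO}(S^n)\cong\Z/2$, and combine Proposition \ref{prop:pi1}(a) and (b) with a fixed $KO$-orientation $\{M,s\}$ to show $p^*(\zeta_0)$ cannot lie in $\mathrm{Im}(r)$. The only difference is cosmetic (contrapositive framing versus direct contradiction); the lemmas invoked and the detecting pairing are identical.
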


\begin{proof}
Conversely, suppose that $M$ is spinnable, i.e., $w_{2}(M) = 0$. Take $\zeta \in \widetilde{KO}(S^{n}) \cong \Z/2$ be the generator. 
Then 
$$\mathrm{Im}~[p^{\ast} \colon \widetilde{KO}(S^{n}) \rightarrow \widetilde{KO}(M)] \subset \mathrm{Im}~[r \colon \widetilde{K}(M) \rightarrow \widetilde{KO}(M)]$$
implies that there exists $\eta \in \wt{K}(M)$ such that $r(\eta) = p^{\ast}(\zeta)$. 
Hence 
$$\langle p^{\ast}(\zeta), \{M, s\}  \rangle_{KO} = \langle r(\eta), \{M, s\}  \rangle_{KO} = 0$$
by Proposition \ref{prop:pi1}$(a)$. Thus it follows from Proposition \ref{prop:pi1}$(b)$ that $\zeta = 0$, a contradiction.
\end{proof}

\begin{lemma}
The homomorphism $i^{\ast}\colon KO^{-1}(M)\rightarrow KO^{-1}(\mdot)$
is injective if and only if
\begin{equation*}
\mathrm{Im}~[p^{\ast} \colon \widetilde{KO}(S^{n}) \rightarrow \widetilde{KO}(M)] \subset \mathrm{Im}~[r \colon \widetilde{K}(M) \rightarrow \widetilde{KO}(M)].
\end{equation*}

\end{lemma}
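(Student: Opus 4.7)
The plan is to convert both sides of the equivalence into the single condition that $\eta \cdot p^{\ast}(g) = 0$ in $\widetilde{KO}^{-1}(M)$, where $g$ generates $\widetilde{KO}(S^n) \cong \Z/2$ and $\eta \in \pi_1(KO) \cong \Z/2$ is the stable Hopf class. Since $\widetilde{KO}(S^n) = \Z/2$, the subgroup $\mathrm{Im}(p^{\ast}) \subseteq \widetilde{KO}(M)$ is cyclic of order at most two, generated by $p^{\ast}(g)$, so $\mathrm{Im}(p^{\ast}) \subseteq \mathrm{Im}(r)$ is equivalent to the single membership $p^{\ast}(g) \in \mathrm{Im}(r)$. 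Separately, I would translate the injectivity of $i^{\ast} \colon KO^{-1}(M) \to KO^{-1}(\mdot)$ via the long exact sequence of the pair $(M, \mdot)$ in reduced $KO^{-1}$-cohomology: after splitting off the $KO^{-1}(\pt)$-summand on which $i^{\ast}$ acts as the identity, injectivity becomes equivalent to vanishing of the preceding map $p^{\ast}_{-1} \colon \widetilde{KO}^{-1}(S^n) \to \widetilde{KO}^{-1}(M)$. For $n = 8k+1$, Bott periodicity gives $\widetilde{KO}^{-1}(S^n) \cong \pi_{8k+2}(KO) \cong \Z/2$ with generator $\eta g$ (writing $g = \eta\beta^k$ via the Bott element $\beta \in \pi_8(KO)$, the element $\eta g = \eta^2\beta^k$ generates $\pi_{8k+2}(KO)$). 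Naturality of $\eta$-multiplication yields $p^{\ast}_{-1}(\eta g) = \eta \cdot p^{\ast}(g)$, so $i^{\ast}$ is injective if and only if $\eta \cdot p^{\ast}(g) = 0$.

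For the $(\Leftarrow)$ direction, suppose $p^{\ast}(g) = r(y)$ for some $y \in \widetilde{K}(M)$. Since $r$ is a $KO$-module morphism and $c(\eta) = 0 \in \pi_1(K) = 0$, I compute
\[
\eta \cdot p^{\ast}(g) = \eta \cdot r(y) = r(c(\eta) \cdot y) = 0,
\]
and the preceding paragraph completes this direction.

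For the $(\Rightarrow)$ direction, I invoke the exact sequence
\[
\widetilde{K}(M) \xrightarrow{r} \widetilde{KO}(M) \xrightarrow{\eta\cdot} \widetilde{KO}^{-1}(M),
\]
derived from the Wood cofibre sequence $\Sigma KO \xrightarrow{\eta} KO \xrightarrow{c} K$ by rotating one step and using the Bott periodicity of $K$ to identify $\widetilde{K}^{-2}(M) \cong \widetilde{K}(M)$. Given this exactness, $\eta \cdot p^{\ast}(g) = 0$ forces $p^{\ast}(g) \in \ker(\eta\cdot) = \mathrm{Im}(r)$, completing the proof. The main obstacle is to verify that the first map in the displayed exact sequence is indeed the real reduction $r$: this is a classical identification, since the composition of the Wood boundary with the Bott isomorphism and the map $r$ are both $KO$-module maps $K \to KO$ sending $1 \in \pi_0(K)$ to $2 \in \pi_0(KO)$, which characterizes them up to sign. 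I would settle this either by citing a standard $K$-theory reference (e.g.\ Karoubi or Adams) or by a direct computation on homotopy groups.
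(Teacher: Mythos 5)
Your argument is correct and is essentially the same route as the paper's, just unwound to the element level. The paper sets up a $3 \times 3$ commutative ladder with rows $\widetilde{K}$, $\widetilde{KO}$ and $KO^{-1}$ and columns $S^n$, $M$, $\mdot$, and observes that, by exactness of the Bott sequence at $\widetilde{KO}(M)$, the containment $\mathrm{Im}(p^{\ast}) \subseteq \mathrm{Im}(r)$ is equivalent to $\gamma \circ p^{\ast} = 0$ (where $\gamma$ is the paper's name for $\eta$-multiplication $\widetilde{KO} \to KO^{-1}$); since $\gamma \colon \widetilde{KO}(S^n) \to KO^{-1}(S^n)$ is an isomorphism for $n \equiv 1 \bmod 8$, this vanishing is the same as $p^{\ast} \colon KO^{-1}(S^n) \to KO^{-1}(M)$ being zero, which in turn is injectivity of $i^{\ast}$ by the long exact sequence of the pair. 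Your identification of $\gamma$ with $\eta$-multiplication, of the generator of $\widetilde{KO}^{-1}(S^n)$ as $\eta g = \eta^2 \beta^k$, and of $\gamma \circ p^{\ast} = 0$ with $\eta \cdot p^{\ast}(g) = 0$ makes explicit exactly what the paper cites in passing from the ``Bott exact sequence.'' Your $(\Leftarrow)$ direction via $c(\eta) = 0$ is a self-contained check of half the exactness you then invoke in full for $(\Rightarrow)$, so strictly speaking the two directions are slightly redundant once the Wood/Bott sequence is known, but the computation is correct.
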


\begin{proof}
Combining the Bott exact sequence (cf. Yang \cite{ya15}) and the exact sequences of the $K$ and $KO$ groups of the pair $(M, \mdot)$, we can get the following commutative diagram:
\begin{equation}
\begin{split}
\xymatrix{
\widetilde{K}(S^{n}) \ar[r]^-{p^{\ast }} \ar[d]_{r} & \widetilde{K}(M) \ar[r]^-{i^{\ast}} \ar[d]_{r} & \widetilde{K}(\mdot)  \ar[d]_{r} \\
\widetilde{KO}(S^{n}) \ar[r]^-{p^{\ast}} \ar[d]_{\gamma} & \widetilde{KO}(M) \ar[r]^-{i^{\ast}} \ar[d]_{\gamma} & \widetilde{KO}(\mdot)  \ar[d]_{\gamma}\\
KO^{-1}(S^{n}) \ar[r]^-{p^{\ast}} & KO^{-1}(M) \ar[r]^-{i^{\ast}} & KO^{-1}(\mdot).
}
\end{split}
\end{equation}
Then 
\begin{equation*}
\mathrm{Im}~[p^{\ast} \colon \widetilde{KO}(S^{n}) \rightarrow \widetilde{KO}(M)] \subset \mathrm{Im}~[r \colon \widetilde{K}(M) \rightarrow \widetilde{KO}(M)]
\end{equation*}
if and only if $\gamma \circ p^{\ast} =0$.
Since $\gamma \circ p^{\ast} = p^{\ast} \circ \gamma$ and $\gamma \colon \widetilde{KO}(S^{n}) \rightarrow KO^{-1}(S^{n})$
is an isomorphism, it follows that $\gamma \circ p^{\ast} =0$ if and only if $p^{\ast} \colon KO^{-1}(S^{n}) \rightarrow KO^{-1}(M)$ is the trivial homomorphism, i.e., $i^{\ast}\colon KO^{-1}(M)\rightarrow KO^{-1}(\mdot)$ is injective.
\end{proof}

\begin{lemma}
If $w_{2}(M) \neq 0$, then
$i^{\ast}\colon KO^{-1}(M)\rightarrow KO^{-1}(\mdot)$ is injective.
\end{lemma}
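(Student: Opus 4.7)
The plan is to apply the cofiber sequence $\mdot\hookrightarrow M\xrightarrow{p} M/\mdot\simeq S^{n}$ together with the Atiyah--Hirzebruch Spectral Sequence (AHSS) for $KO^{-1}(M)$. The associated long exact sequence reads
\[
\cdots\to\wt{KO}^{-1}(S^{n})\xrightarrow{\,p^{*}\,}KO^{-1}(M)\xrightarrow{\,i^{*}\,}KO^{-1}(\mdot)\to\cdots,
\]
so $i^{*}$ is injective if and only if $p^{*}|_{\wt{KO}^{-1}(S^{n})}$ is the zero map. Since $-1-n\equiv-2\bmod 8$, we have $\wt{KO}^{-1}(S^{n})=KO^{-1-n}(\pt)\cong\Z/2$, so the task reduces to showing that this single $\Z/2$ is annihilated in $KO^{-1}(M)$.

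Filter $KO^{-1}(M)$ by the skeleta of a CW-structure on $M$, setting $F^{p}KO^{-1}(M):=\ker\bigl(KO^{-1}(M)\to KO^{-1}(M^{(p-1)})\bigr)$. Give $S^{n}$ the CW-structure with one $0$-cell and one $n$-cell, so that $p$ is cellular and $M^{(n-1)}$ is sent to the basepoint; then $p^{*}\bigl(\wt{KO}^{-1}(S^{n})\bigr)\subseteq F^{n}KO^{-1}(M)$. Since $\dim M=n$, $F^{n+1}KO^{-1}(M)=0$, and consequently $F^{n}KO^{-1}(M)=E_{\infty}^{n,-1-n}$, which is a subquotient of $E_{2}^{n,-1-n}=H^{n}(M;KO^{-1-n}(\pt))=H^{n}(M;\Z/2)\cong\Z/2$. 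It therefore suffices to prove $E_{\infty}^{n,-1-n}=0$.

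To see this, I consider the incoming differential
\[
d_{2}\colon E_{2}^{n-2,-n}=H^{n-2}(M;\Z/2)\longrightarrow E_{2}^{n,-1-n}=H^{n}(M;\Z/2).
\]
Between $\Z/2$-coefficient lines of the $KO$-theory AHSS, $d_{2}$ coincides with $\Sq^{2}$; this is the standard computation arising from the first non-trivial $k$-invariant of the Postnikov tower of the $KO$-spectrum. By Wu's formula, using $w_{1}(M)=0$, we have $\Sq^{2}(x)=v_{2}(M)\cdot x=w_{2}(M)\cdot x$ for every $x\in H^{n-2}(M;\Z/2)$. The hypothesis $w_{2}(M)\neq 0$, combined with Poincar\'e duality over $\Z/2$, produces some $x\in H^{n-2}(M;\Z/2)$ such that $w_{2}(M)\cdot x$ generates $H^{n}(M;\Z/2)$. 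Thus $d_{2}$ is surjective, which forces $E_{3}^{n,-1-n}=E_{\infty}^{n,-1-n}=0$, completing the proof. The main technical point to justify is the identification of the $d_{2}$-differential with $\Sq^{2}$ on these $\Z/2$-lines; this is standard but will warrant an explicit reference.
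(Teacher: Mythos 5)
Your proof takes essentially the same route as the paper's: both use the Atiyah--Hirzebruch spectral sequence for $KO^{*}(M)$, identify the relevant $d_{2}$-differential with $\Sq^{2}$, deduce from $w_{2}(M)\neq 0$ that this differential is onto $H^{n}(M;\Z/2)$, and conclude $E_{\infty}^{n,-n-1}=0$, which forces $F^{n}KO^{-1}(M)=\ker(i^{*})=0$. The only differences are cosmetic: you spell out the Wu-formula/Poincar\'e-duality argument that makes $\Sq^{2}$ onto (the paper just asserts $d_{2}\neq 0$), and you insert a small detour through the cofiber map $p^{*}\colon\wt{KO}^{-1}(S^{n})\to\wt{KO}^{-1}(M)$, whereas the paper works directly with the filtration identity $F^{n,-n-1}=\ker\bigl(KO^{-1}(M)\to KO^{-1}(M^{(n-1)})\bigr)$ and the retraction $M^{(n-1)}\simeq\mdot$.
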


\begin{proof}
Let us consider the Atiyah-Hirzebruch spectral sequence for $KO^{\ast}(M)$. 

Since $n \equiv 1 \bmod 8$, 
$KO^{-n} \cong KO^{-n-1} \cong \Z/2$.
Note that the differential 
\[
d_{2}^{n-2, -n} \colon H^{n-2}(M; KO^{-n}) \rightarrow H^{n}(M; KO^{-n-1})
\]
on the $E_{2}$ page is the Steenrod square $\Sq^{2}$ (e.g.\ see Fujii \cite[formula (1.3)]{fu67}).
Therefore, $w_{2}(M) \neq 0$ implies that
$d_{2}^{n-2, -n} \neq 0$,
and hence
\begin{equation}\label{eq:einfty}
E_{\infty}^{n, -n-1} \cong E_{3}^{n, -n-1} = 0.
\end{equation}
Recall that by construction (cf.\ Switzer \cite[pp.\,336-341]{sw02b} or \cite{ya15}), 
\[
E_{\infty}^{n, -n-1} \cong F^{n, -n-1}/F^{n+1, -n-2},
\]
where 
$F^{p,q}:= \mathrm{Ker}~[i^{\ast}\colon KO^{p+q}(M)\rightarrow KO^{p+q}(M^{p-1})]$.
Consequentely, since $F^{n+1, -n-2} = 0$ by definition, it follows  that
$F^{n, -n-1} = 0$. That is,
\[
i^{\ast}\colon KO^{-1}(M)\rightarrow KO^{-1}(\mdot)
\]
is injective.
\end{proof}

\begin{proof}[Proof of Theorem \ref{thm:topo}]
The theorem
follows by diagram chasing in the commutative diagram \eqref{eq:KOK} and
applying Propositions \ref{prop:extc}, \ref{prop:pi1} and \ref{prop:nspin}.
\end{proof}

\subsection{The top obstruction and $\sigma_{w_4}$} \label{ss:sigma_and_pi}
By Theorem \ref{thm:mainpi}, if $M$ is a spinnable $9$-manifold with $w_8(M) = 0$,
then the top obstruction to $M$ admitting a stable complex structure is given by
$\mathfrak{o}_9(M) = \pi^1(M, s)$.  We note that the assignment
$$ \pi^1 \colon \Omega^{Spin}_9 \to \Z/2, 
\quad [M, s] \mapsto \pi^1(M, s),$$
is a well-defined bordism invariant by \cite{abp66}.
In \cite{c20} it was shown that map
$\sigma_{w_4} \colon \Omega^{Spin}_9 \to \Z/2, [M, s] \mapsto 
\sigma_{w_4}(M)$, defines a non-trivial invariant of spin bordism
and that 
the kernel of $\sigma_{w_4}$ is precisely those bordism classes which 
contain a homotopy $9$-sphere.

\begin{proposition} \label{prop:pi_and_sigma}
We have $\pi^1 = \sigma_{w_4} \colon \Omega^{Spin}_9 \to \Z/2$.
\end{proposition}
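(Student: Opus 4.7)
The plan is to verify the identity of the two homomorphisms $\pi^1,\sigma_{w_4}\colon \Omega^{Spin}_9\to\Z/2$ by evaluating both on a set of generators of $\Omega^{Spin}_9$. Since $\Omega^{Spin}_9\cong(\Z/2)^2$ (as used implicitly in the proof of Lemma \ref{lem:sur}), it suffices to produce two spin bordism classes whose images span, and to compute $\pi^1$ and $\sigma_{w_4}$ on each.

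First, I would establish that $\pi^1$ vanishes on the image of the forgetful map $\Omega^{SU}_9\to\Omega^{Spin}_9$. Indeed, if $M$ admits an $SU$-structure, then $\wt{TM}=r(\eta)$ for some $\eta\in\wt K(M)$, and Proposition \ref{prop:pi1}(a) immediately gives $\pi^1(M,s)=\an{r(\eta),\{M,s\}}_{KO}=r(\an{\eta,\{M,s\}}_K)=0$, because $K_9(\pt)=0$. I would then prove the analogous vanishing for $\sigma_{w_4}$ on $SU$-bordism classes: on an $SU$-manifold $w_4(M)=\rho_2(c_2(M))$ lifts to an integral class, so the natural Wu orientation coming from the $SU$-structure, combined with the construction of $\wh\phi$ via Brown quadratic forms on $M\times S^1$ in Section \ref{ss:phi}, forces $w_4(M)\cup\wh\phi(M)=0\in H^9(M;\Z/2)$. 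This step parallels the vanishing of $\pi^1$ and together the two vanishings reduce the problem to the quotient $\Omega^{Spin}_9/\mathrm{Im}(\Omega^{SU}_9)$.

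Next, I would identify explicit generators. Natural candidates are $M_0:=S^1\times\Ha P^2$ with the non-bounding spin structure on the $S^1$-factor, and a product $S^1_{nb}\times B^8$ where $B^8$ is a Bott-type generator of $\Omega^{Spin}_8$; classical ABP/Kreck-style computations (together with the $\alpha$-invariant $\Omega^{Spin}_9\to KO_9(\pt)=\Z/2$) show that such products generate $\Omega^{Spin}_9$. On each generator I would perform the direct calculation: for $\sigma_{w_4}$, use the product description of $\wh\phi$ from \cite{c20} (since $S^1\times N^8$ has a $v_6$-orientation inherited from $N$) to evaluate $w_4(M)\cup\wh\phi(M)$ on the fundamental class; for $\pi^1$, use Lemma \ref{lem:index} identifying $\pi^1(M,s)$ with the difference of indices $\mathrm{ind}_9(\slashed{\mathfrak{D}}_{TM})-\mathrm{ind}_9(\slashed{\mathfrak{D}})$ and compute via the multiplicative property of the index under products with $(S^1,s_{nb})$ and the Atiyah--Singer formula on $\Ha P^2$.

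The main obstacle will be the calculation of $\sigma_{w_4}$ on products and on $SU$-manifolds, since $\wh\phi$ is an unstable tangential invariant defined through Brown's quadratic refinement on $M\times S^1$ and is not directly given by a formula in characteristic classes. The productive route is to exploit the geometric description of $\wh\phi(M)(x)$ in terms of the once-stabilised normal bundle of a representing $5$-sphere (as recalled in Section \ref{ss:phi}), reducing the computation to a $\pi_5(BO(5))$-calculation that can be matched with the $K$-theoretic expression for $\pi^1$. Once both invariants are shown to agree on the two chosen generators, the equality $\pi^1=\sigma_{w_4}$ follows by $\Z/2$-linearity.
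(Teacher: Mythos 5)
Your proposal takes a genuinely different route than the paper, and unfortunately contains a real gap at its central step. The paper never evaluates $\sigma_{w_4}$ directly on any manifold. Instead, it quotes the result from \cite{c20} that the kernel of $\sigma_{w_4} \colon \Omega^{Spin}_9 \to \Z/2$ consists precisely of those bordism classes represented by homotopy $9$-spheres. It then proves the short exact sequence
\[
0 \to \Omega^{SU}_9 \xrightarrow{F_*} \Omega^{Spin}_9 \xrightarrow{\pi^1} \Z/2 \to 0
\]
by showing (as you do) that $\pi^1 \circ F_* = 0$ via Theorem~\ref{thm:topo}, and then for the reverse inclusion it uses \cite[Proposition 2.6]{bcs14} to represent any class with $\pi^1 = 0$ by a $3$-connected manifold, which then admits an $SU$-structure by Theorems~\ref{thm:spin} and \ref{thm:topo}. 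Finally, since $\alpha\colon\Theta_9\to\Z/2$ is onto and every homotopy $9$-sphere has an $SU$-structure, the natural map $\Theta_9 \to \Omega^{SU}_9$ is onto, so $\ker(\sigma_{w_4})$ and $\ker(\pi^1)$ coincide and the two non-zero homomorphisms to $\Z/2$ agree.

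The gap in your proposal is the step claiming that the $SU$-structure's integral lift of $w_4$ ``forces $w_4(M) \cup \wh\phi(M) = 0$.'' This does not follow from the construction in Section~\ref{ss:phi}: the Brown form $\varphi(\bar\nu_{M\times S^1})$ and hence $\wh\phi(M)$ depend on the unstable tangential structure, not merely on the existence of an integral lift of $w_4$. Nothing in the definition of $\wh\phi$ makes it visibly vanish on $SU$-manifolds, and in fact establishing this is essentially equivalent to the proposition being proved, so you have circularity rather than a reduction. Similarly, the proposed direct computation of $\wh\phi$ on $S^1 \times \Ha P^2$ and $S^1_{nb} \times B^8$ via $\pi_5(BO(5))$ is precisely the kind of delicate unstable calculation the paper deliberately avoids. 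If you want to repair your approach, the missing ingredient is exactly the characterization from \cite{c20} of $\ker(\sigma_{w_4})$ as the homotopy-sphere classes; once you import that, you do not need to compute $\sigma_{w_4}$ on any generator at all.
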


\begin{proof}
Let $\Omega^{SU}_9$ be the $9$-dimensional $SU$-bordism group and $F_{\ast} \colon \Omega^{SU}_9 \to \Omega^{Spin}_9$ the forgetful homomorphism. We first show that there is a short exact sequence
\begin{equation} \label{eq:SU_to_Spin}
0 \to \Omega^{SU}_9 \xrightarrow{~F_{\ast}~}
 \Omega^{Spin}_9 \xrightarrow{~\pi^1~} \Z/2 \to 0.
\end{equation}
It is well known that $\Omega^{SU}_9 = \Z/2$, $\Omega^{Spin}_9 \cong ( \Z/2 )^2$
and the forgetful homomorphism $\Omega^{SU}_9 \to \Omega^{Spin}_9$ is monic 
(see Stong \cite[p.\,\,353]{st68b}). 
Secondly, we prove that $\mathrm{Ker}(\pi^1) = \mathrm{Im}(F_{\ast})$ . 
From Theorem \ref{thm:topo}, we easily deduce that $\pi^1 \circ F_{\ast} = 0$ and so
$\mathrm{Im}(F_{\ast}) \subseteq \mathrm{Ker}(\pi^1)$. 
Now for any spin cobordism class $[M, s] \in \Omega^{Spin}_9$ 
suppose that $\pi^1( [M, s] ) = 0$. Since $M$ can be selected to be a $3$-connected manifold by \cite[Proposition 2.6]{bcs14}, it follows from Theorems \ref{thm:spin} and \ref{thm:topo} 
that $M$ admits a stable almost complex structure, hence an $SU$-structure. 
This means that $[M, s] \in \mathrm{Im}(F_{\ast})$.  
Hence $\mathrm{Ker}(\pi^1) \subseteq \mathrm{Im}(F_{\ast})$,
$\mathrm{Ker}(\pi^1) = \mathrm{Im}(F_{\ast})$ and
we have shown that \eqref{eq:SU_to_Spin} is exact.

The $\alpha$-invariant $\alpha \colon \Theta_9 \to \Z/2$ is onto \cite[Theorem 2]{mi65}
and since $\Omega^{SU}_9 = \Z/2$ and every homotopy $9$-sphere admits an $SU$-structure,
it follows that  the natural map
$\Theta_9 \to \Omega^{SU}_9$ is onto.
Hence the exact sequence \eqref{eq:SU_to_Spin} fits into the following commutative diagram of exact sequences:
$$
\xymatrix{ 
&
\Theta_9 \ar[r] \ar[d] &
\Omega^{Spin}_9 \ar[d]^= \ar[r]^{\sigma_{w_4}} &
\Z/2 \ar[d]^= \ar[r] &
0 \\
0 \ar[r] &
\Omega^{SU}_9 \ar[r] &
\Omega^{Spin}_9 \ar[r]^{\pi^1}&
\Z/2 \ar[r] &
0
}$$
We see that $\pi^1, \sigma_{w_4} \colon \Omega^{Spin}_9 \to \Z/2$ 
are non-zero homomorphisms with $\ker(\sigma_{w_4}) \subseteq \ker(\pi^1)$.
Hence we have $\ker(\sigma_{w_4}) = \ker(\pi^1)$ and so
$\pi^1 = \sigma_{w_4}$.
\end{proof}

We conclude with an 
example of a spinnable $3$-connected $9$-manifold which does not admit a contact structure.
Let $(M_1, s_1)$ be the manifold from Section \ref{ss:examples}, which was obtained by
spin surgery along $S^1 \times D^8 \subset S^1 \times \Ha P^2$, where $S^1 \times \Ha P^2$
has a product spin structure with the spin structure on $S^1$ not bounding.
By \cite[Theorem p.\,339]{st68b}, $[M_1, s_1] \neq 0 \in \Omega^{Spin}_9$, but since the $\wh A$-genus
of $\Ha P^2$ vanishes, the $\alpha$-invariant of $(M_1, s_1)$ vanishes.
Now the proof of Proposition \ref{prop:pi_and_sigma} shows that the image of $\Omega^{SU}_9$
in $\Omega^{Spin}_9$ is detected by the $\alpha$-invariant.  It follows that 
\begin{equation} \label{eq:M_1}
\pi^1(M_1, s_1) = w_4(M) \cup \wh \phi(M) \neq 0
\end{equation}
and thus $M_1$ does not admit a contact structure.  However, it is clear from the surgery
construction of $M_1$ that it is $3$-connected; indeed $M_1$ is simply-connected
and $H_*(M_1) \cong H_*(S^5 \times S^4)$.



\bibliographystyle{amsplain}

\begin{multicols}{2}[]
\bigskip
\noindent
\emph{Diarmuid Crowley}\\
  \vskip -0.125in \noindent
  {\small
  \begin{tabular}{l}%
    School of Mathematics and Statistics\\
    University of Melbourne\\
    Parkville, VIC, 3010, Australia\\
    \textsf{dcrowley@unimelb.edu.au}
  \end{tabular}}

\noindent
\emph{Huijun Yang}\\  
\vskip -0.125in \noindent
{\small
  \begin{tabular}{l}%
   School of Mathematics and Statistics\\
   Henan University\\
   Kaifeng, Henan, 475004, China\\
    \textsf{yhj@amss.ac.cn}
  \end{tabular}}
\end{multicols}

\end{document}